\newtheorem{theorem}{Theorem}
\theoremstyle{plain}
\newtheorem*{acknowledgement}{Acknowledgement}
\newtheorem{corollary}[theorem]{Corollary}
\newtheorem*{ncorollary}{Corollary}
\newtheorem{definition}[theorem]{Definition}
\newtheorem{lemma}[theorem]{Lemma}
\newtheorem{proposition}[theorem]{Proposition}
\theoremstyle{remark}
\newtheorem{example}[theorem]{Example}
\newtheorem{remark}[theorem]{Remark}
\numberwithin{equation}{section}
\numberwithin{theorem}{section}
\begin{document}
\title[Hilbert reciprocity using localization]{Hilbert reciprocity using $K$-theory localization}
\author{Oliver Braunling}
\email{oliver.braunling@gmail.com}
\thanks{The author was supported by the EPSRC Programme Grant EP/M024830/1
\textquotedblleft Symmetries and correspondences: intra-disciplinary
developments and applications\textquotedblright.}
\subjclass[2020]{ Primary 11A15, 11S70, Secondary 19C20}
\keywords{Hilbert reciprocity law, Moore sequence, localization sequence. Hilbert
symbol, tame symbol}

\begin{abstract}
Usually the boundary map in $K$-theory localization only gives the tame symbol
at $K_{2}$. It sees the tamely ramified part of the Hilbert symbol, but no
wild ramification. Gillet has shown how to prove Weil reciprocity using such
boundary maps. This implies Hilbert reciprocity for curves over finite fields.
However, phrasing Hilbert reciprocity for number fields in a similar way fails
because it crucially hinges on wild ramification effects. We resolve this
issue, except at $p=2$. Our idea is to pinch singularities near the
ramification locus. This fattens up $K$-theory and makes the wild symbol
visible as a boundary map.

\end{abstract}
\maketitle

\section{Introduction}

In the discipline of $K$-theory the term \emph{tame symbol} is a special name
for the boundary map $\partial_{v}\colon K_{2}(F)\rightarrow\kappa(v)^{\times
}$ for a field $F$ with a discrete valuation $v$ which has residue field
$\kappa(v)$. This map comes from the localization sequence, but it is also
very easy to give an explicit formula:%
\begin{equation}
\partial_{v}\{f,g\}:=(-1)^{v(f)v(g)}\overline{f^{v(g)}g^{-v(f)}}\in
\kappa(x)^{\times}\text{,} \label{lee1}%
\end{equation}
where $\overline{(-)}$ is the reduction modulo the valuation ideal.

In number theory the meaning of the term tame symbol is more restrictive.
Every finite extension $F/\mathbb{Q}_{p}$ with its usual valuation $v$ and
residue field $\kappa(v)$ is equipped, through a construction coming from
local class field theory, with its Hilbert symbol%
\[
h_{v}\colon K_{2}(F)\longrightarrow\mu(F)\text{,}%
\]
where $\mu(F)$ is the group of roots of unity. We shall recall its
construction and further background in \S \ref{sect_RecallHilbertSymbol}. The
residue field satisfies $\kappa(v)^{\times}\cong\mathbb{F}_{q}^{\times}$ and
using the Teichm\"{u}ller lift these roots of unity lift injectively to
elements of $\mu(F)$. The image of this lift agrees precisely with the direct
summand of those roots in $\mu(F)$ of order prime to $p$. Consider only the
output of $h_{v}$ on this summand,%
\[
K_{2}(F)\longrightarrow\mu(F)[\text{prime-to-}p\text{-part}]\cong%
\kappa(v)^{\times}\text{.}%
\]
Then this map agrees up to an isomorphism with the tame symbol. The
complementary $p$-torsion part $\mu(F)[p^{\infty}]$ has an interpretation in
terms of wild ramification. Seen from this viewpoint, the tame symbol takes
its name from encoding the tamely ramified information in the\ Hilbert symbol.
Unlike the tame symbol, the full Hilbert symbol has no simple formula as in
Equation \ref{lee1}. Formulas for the $p$-part of $h_{v}$ are known as
\emph{explicit reciprocity laws} and this is a whole area of research in
itself. Many explicit formulas exist, of varying generality, and all of them
are very complicated (as a very incomplete list, we name Artin--Hasse
\cite{MR3069494},\ Shafarevich \cite{MR0031944}, Wiles \cite{MR480442},
Br\"{u}ckner \cite{MR533354}, Vostokov \cite{MR522940}, Henniart
\cite{MR636453}, de Shalit \cite{MR835803},\ldots\footnote{There are many more
works and I apologize to those I\ have not explicitly listed.}).

Taking this into account, it is almost surprising that the localization
sequence in $K$-theory, a result which works in broad generality and has
nothing to do with the specifics of number theory, captures a part of the
Hilbert symbol at all.

It has been a long-standing folklore question whether it would also be
possible to express the full Hilbert symbol, including the wild part, as a
boundary map in a localization sequence. We answer this affirmatively in this
paper, except at places over the prime $p=2$ and real places. In other words:
We cannot capture $2$-torsion phenomena with what we do.

Whenever we work with a number field $F$, we have the instinct to work with
its ring of integers $\mathcal{O}_{F}$ as an integral model. Recall that an
\emph{order} $\mathcal{R}$ in $F$ is any subring $\mathcal{R}\subseteq F$ such
that $\mathbb{Q}\cdot\mathcal{R}=F$ and which is finitely generated as an
abelian group. The ring of integers $\mathcal{O}_{F}$ agrees with the unique
maximal order in $F$. This intuition is informed from ring-theoretic
properties: The maximal order is regular, it is our only chance to get a
Dedekind domain, to get unique factorization in prime ideals. However,
addressing our problem, it is best to shrink the order. First, proceed locally.

\begin{theorem}
\label{thm_Intro_1}For any finite extension $F/\mathbb{Q}_{p}$ with $p$ odd,
there exists a well-defined local subring $(\mathcal{R},\mathfrak{\tilde{m}})$
of the valuation ring such that the localization sequence for the closed-open
complement%
\[
\operatorname*{Spec}\mathcal{R}/\mathfrak{\tilde{m}}\hookrightarrow
\operatorname*{Spec}\mathcal{R}\hookleftarrow\operatorname*{Spec}F
\]
outputs%
\[
K_{2}(\mathcal{R})\longrightarrow K_{2}(F)\overset{\partial}{\longrightarrow
}K_{\mathfrak{\tilde{m}},1}(\mathcal{R})\longrightarrow1
\]
and such that the boundary map is the full Hilbert symbol. In particular,
$K_{\mathfrak{\tilde{m}},1}(\mathcal{R})\cong\mu(F)$. If $F/\mathbb{Q}_{p}$ is
unramified, $\mathcal{R}$ is the usual ring of integers.
\end{theorem}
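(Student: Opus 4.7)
The plan is to realize the wild $p$-part of $\mu(F)$ as an extra piece of $K_1$-with-support that arises only when the order $\mathcal{R}$ is non-regular. In the unramified case I would set $\mathcal{R}=\mathcal{O}_F$; since $p$ is odd one has $\mu(F)=\kappa(v)^\times$, and the usual tame-symbol boundary already coincides with $h_v$. In the ramified case I would define
$$\mathcal{R}=W(\kappa(v))+\pi^{c}\mathcal{O}_F,$$
where $W(\kappa(v))$ is the maximal unramified suborder and $c$ is chosen just large enough that the cokernel $(\mathcal{O}_F/\pi^{c})^\times/(\mathcal{R}/\pi^{c})^\times$ equals $\mu_{p^\infty}(F)$. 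Finding such a $c$ relies on the precise location of $\zeta_{p^k}-1$ in the higher unit filtration $U^{(i)}\mathcal{O}_F$.

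Next, Quillen localization applied to the one-dimensional Noetherian local domain $\mathcal{R}$ yields
$$K_2(\mathcal{R})\to K_2(F)\xrightarrow{\partial}K_{\tilde{\mathfrak{m}},1}(\mathcal{R})\to K_1(\mathcal{R})\to K_1(F),$$
and injectivity of the last arrow is immediate, since $K_1$ of a commutative local ring is just its group of units. Running this sequence alongside the analogous one for $\mathcal{O}_F$, where devissage gives $K_{\mathfrak{m},1}(\mathcal{O}_F)=\kappa(v)^\times$, produces a ladder whose deviation is controlled by the Karoubi/Milnor patching square for the conductor inclusion $\mathcal{R}\hookrightarrow\mathcal{O}_F$ with common quotient $\mathcal{O}_F/\pi^{c}$. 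Its Mayer--Vietoris tail identifies the cokernel of $K_1(\mathcal{R})\to K_1(\mathcal{O}_F)$ with $(\mathcal{O}_F/\pi^{c})^\times/(\mathcal{R}/\pi^{c})^\times$, which by construction is $\mu_{p^\infty}(F)$; a short diagram chase then delivers the clean splitting $K_{\tilde{\mathfrak{m}},1}(\mathcal{R})\cong\kappa(v)^\times\oplus\mu_{p^\infty}(F)\cong\mu(F)$.

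To identify $\partial$ with $h_v$ I would invoke Moore's local uniqueness theorem: a continuous surjection $K_2(F)\to\mu(F)$ whose prime-to-$p$ component agrees with the tame symbol is forced to be the Hilbert symbol. The prime-to-$p$ statement for $\partial$ falls out of the ladder comparison with $\mathcal{O}_F$, while the Steinberg and bilinearity relations hold automatically for any localization boundary. The main obstacle I expect is the conductor calculation: arranging $(\mathcal{O}_F/\pi^{c})^\times/(\mathcal{R}/\pi^{c})^\times=\mu_{p^\infty}(F)$ \emph{on the nose}, rather than merely containing it, demands sharp control of the filtration by principal units, and it is precisely the well-known $2$-adic anomaly in this filtration that forces the exclusion of $p=2$ in the theorem.
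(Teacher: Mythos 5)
Your ring is essentially the right one: $W(\kappa(v)) = \mathcal{O}_0$ is the ring of integers of the maximal unramified subextension, so $\mathcal{R}=\mathcal{O}_0+\pi^{c}\mathcal{O}_F$ is the family $R_c$ from the paper, and you correctly note that $\partial$ is automatically surjective because $K_1(\mathcal{R})=\mathcal{R}^\times$ injects into $F^\times$. However, the mechanism you propose for pinning down $c$ and for computing $K_{\tilde{\mathfrak m},1}(\mathcal{R})$ has a genuine gap in two places. First, the requirement that $(\mathcal{O}_F/\pi^{c})^\times/(\mathcal{R}/\pi^{c})^\times$ equal $\mu_{p^\infty}(F)$ on the nose cannot in general be met by any $c$. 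Take $F=\mathbb{Q}_p(\zeta_{p^2})$ with $p$ odd, so $\mu_{p^\infty}(F)=\mathbb{Z}/p^2$. Here $e=p(p-1)$, and for $c\le e$ the cokernel is $U_F^1/U_F^c$, of order $p^{c-1}$; so $c=3$ is the only candidate with the right cardinality. But since $e+1>p\ge 3$, one has $x^p\in U_F^p\subseteq U_F^3$ for every $x\in U_F^1$, so $U_F^1/U_F^3$ is $p$-torsion, hence $(\mathbb{Z}/p)^2$, not $\mathbb{Z}/p^2$. For larger $c$ the cokernel only grows further. The construction you describe has no solution.

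Second, and more structurally, the unit cokernel $\operatorname{coker}\!\bigl(K_1(\mathcal{R})\to K_1(\mathcal{O}_F)\bigr)$ is not the invariant that controls $K_{\tilde{\mathfrak m},1}(\mathcal{R})$. In the ladder of localization sequences the maps $K_{\tilde{\mathfrak m},1}(\mathcal{R})\to K_1(\mathcal{R})$ and $K_{\mathfrak m,1}(\mathcal{O}_F)\to K_1(\mathcal{O}_F)$ are both zero (by the very injectivity you observed), so the ladder decouples and there is no diagram chase linking $K_{\tilde{\mathfrak m},1}(\mathcal{R})$ to $\mathcal{O}_F^\times/\mathcal{R}^\times$. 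What actually determines $K_{\tilde{\mathfrak m},1}(\mathcal{R})$ is $K_2(F)/\operatorname{im}K_2(\mathcal{R})$, a $K_2$-level quantity; note that as $c\to\infty$ the unit cokernel grows without bound, whereas $K_{\tilde{\mathfrak m},1}(R_c)$ stabilizes at $\mu(F)$. The paper therefore argues very differently: it shows directly that $\operatorname{im}\bigl(K_2(R_m)\to K_2(F)\bigr)=\ker(h)$ once $m$ is large. The inclusion $\subseteq$ decomposes a unit $u\in 1+\tilde{\mathfrak m}_m$ as $(1-px)\cdot\beta$ with $x\in\mathcal{O}_0$ and $\beta\in U_F^m$, then uses Hasse's lemma (that $U_F^m\subseteq(U_F^1)^{p^k}$ once $m$ is large) together with divisibility of $K_2(\mathcal{O}_0)$ for $p$ odd; the inclusion $\supseteq$ uses Moore's theorem that $\ker(h)=K_2(F)_{\mathrm{div}}$ plus an elementary lemma that divisible elements of $K_2(F)$ lie in $\operatorname{im}K_2(R_c)$ for every $c$. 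Once $\operatorname{im}K_2(\mathcal{R})=\ker(h)$ is known, exactness of the localization sequence gives $K_{\tilde{\mathfrak m},1}(\mathcal{R})\cong K_2(F)/\ker(h)\cong\mu(F)$ and $\partial=h$ up to that isomorphism for free. No appeal to any \emph{local uniqueness theorem} for the Hilbert symbol is made, and I am not aware that the precise statement you invoke (a continuous surjection onto $\mu(F)$ matching the tame symbol prime to $p$ is forced to be $h$) is a citable result.

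A minor further worry: any attempt to push the conductor-square Mayer--Vietoris up to the $K_2/K_1$ boundary would also have to confront the failure of excision for $K_2$ in this non-nilpotent setting, another reason the patching route is hazardous here. Your instinct that the wild part of $\mu(F)$ becomes visible precisely because the order is singular is exactly the right one; it is the method of detecting it (a $K_1$ unit count rather than a $K_2$ image computation) that needs to change.
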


See Theorem \ref{thm_LocalMain}. The ring $\mathcal{R}$, which we shall call
the \emph{optimal order}, will usually be singular. We explain its
construction later in the paper. Recall that $K_{\mathfrak{m},1}%
(\mathcal{O}_{F})$ gives only the prime-to-$p$ order roots of unity when using
the valuation ring $\mathcal{O}_{F}$ instead of $\mathcal{R}$. The
localization sequence in $K$-theory is compatible under switching from working
locally to globally. Running the same idea globally, one gets the full Hilbert
reciprocity law $-$ except that since the above only works for odd $p$, we
must invert $2$. This also washes away the contribution from all real places.

\begin{theorem}
\label{thm_Intro_2}Suppose $F$ is any number field. Then we construct a
well-defined order $\mathcal{R}\subseteq\mathcal{O}_{F}$ such that we get a
commutative diagram with exact rows%
\[%
\xymatrix{
K_{2}(\mathcal{R}) \ar[r] & K_{2}(F) \ar[r]^-{\partial} \ar@{=}[d] & \bigoplus
_{v \enspace\operatorname{finite}}K_{\mathfrak{\tilde{m}},1}(\mathcal{R}%
_{v}) \ar[d]^{\oplus_v \phi}
\ar[r] & SK_{1}(\mathcal{R}) \ar[r] \ar[d] & 0 \\
& K_{2}(F) \ar[r]_-{h_v} & \bigoplus_{{v \enspace\operatorname{noncomplex}}}
\mu(F_v) \ar[r]_-{\cdot\frac{m_v}{m}}
& \mu(F) \ar[r] & 0
}%
\]
in the category of abelian groups up to $2$-primary torsion. The top row is
the localization sequence with respect to the maximal ideals of $\mathcal{R}$.
The bottom row is the Moore sequence (a way to phrase Hilbert reciprocity).
The downward arrows are isomorphisms (up to $2$-primary torsion).
\end{theorem}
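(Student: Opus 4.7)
The plan is to construct the global order $\mathcal{R}$ by gluing the local optimal orders $\mathcal{R}_{v}\subseteq\mathcal{O}_{F_{v}}$ furnished by Theorem \ref{thm_Intro_1}, and then to match the $K$-theory localization sequence for $\mathcal{R}$ termwise with the Moore sequence. Since Theorem \ref{thm_Intro_1} gives $\mathcal{R}_{v}=\mathcal{O}_{F_{v}}$ whenever $F_{v}/\mathbb{Q}_{p}$ is unramified, only the finite set $S$ of odd finite places at which $F$ ramifies requires modification. Fixing at each such $v$ a conductor ideal $\mathfrak{c}_{v}$ of $\mathcal{R}_{v}$ in $\mathcal{O}_{F_{v}}$, I would take
\[
\mathcal{R}:=\mathcal{O}_{F}\times_{\prod_{v\in S}\mathcal{O}_{F_{v}}/\mathfrak{c}_{v}}\prod_{v\in S}\mathcal{R}_{v}/\mathfrak{c}_{v},
\]
a one-dimensional Noetherian order in $F$ whose $v$-adic completion is $\mathcal{R}_{v}$ for $v\in S$ and $\mathcal{O}_{F_{v}}$ otherwise. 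At $v\mid 2$ and at the archimedean places I keep $\mathcal{O}_{F_{v}}$ itself, since any discrepancy there will be annihilated after tensoring with $\mathbb{Z}[\tfrac{1}{2}]$.

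Next I would apply the $K$-theory localization sequence to the pair $\operatorname*{Spec}\mathcal{R}/\mathfrak{J}\hookrightarrow\operatorname*{Spec}\mathcal{R}\leftarrow\operatorname*{Spec}F$, where the closed part is the disjoint union of the maximal ideals. Since $\mathcal{R}$ has Krull dimension one, Zariski excision decomposes the relative $K$-groups as $\bigoplus_{v}K_{\mathfrak{\tilde{m}},\ast}(\mathcal{R}_{v})$, yielding the top row of the diagram. Naturality of $\partial$ under the flat base change $\mathcal{R}\to\mathcal{R}_{v}$ identifies the $v$-component of the global boundary with the local boundary, which Theorem \ref{thm_Intro_1} in turn identifies with $h_{v}$. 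This simultaneously verifies commutativity of the left and middle squares and supplies the vertical arrow $\oplus_{v}\phi_{v}$ via the local isomorphisms $K_{\mathfrak{\tilde{m}},1}(\mathcal{R}_{v})\cong\mu(F_{v})$; at $v\mid 2$ one uses the tame boundary $K_{\mathfrak{\tilde{m}},1}(\mathcal{O}_{F_{v}})\cong\kappa(v)^{\times}$, embedded into $\mu(F_{v})$ via Teichm\"{u}ller.

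The bottom row is the classical Moore sequence: the rightmost arrow $(\zeta_{v})_{v}\mapsto\prod_{v}\zeta_{v}^{m_{v}/m}$ is surjective, and its kernel is the image of $\oplus_{v}h_{v}$ by global Hilbert reciprocity. The commutativity already established forces a unique induced map $SK_{1}(\mathcal{R})\to\mu(F)$ making the right square commute. To prove the isomorphism claim I then invert $2$: at every odd finite place the local comparison is an isomorphism by Theorem \ref{thm_Intro_1}, while at places over $2$ and at real places the kernel and cokernel of $\phi_{v}$ are $2$-primary and hence killed after $\otimes\mathbb{Z}[\tfrac{1}{2}]$. A five-lemma argument applied to the two horizontal exact sequences, with the identity on $K_{2}(F)$ on the left and the now-isomorphism $\oplus_{v}\phi_{v}\otimes\mathbb{Z}[\tfrac{1}{2}]$ in the middle, upgrades the induced right-hand map to an isomorphism as well.

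The delicate point I expect to be the main obstacle is the rightmost square: the surjection $\bigoplus_{v}K_{\mathfrak{\tilde{m}},1}(\mathcal{R}_{v})\twoheadrightarrow SK_{1}(\mathcal{R})$ is produced abstractly by localization, whereas the Moore map $(\zeta_{v})\mapsto\prod\zeta_{v}^{m_{v}/m}$ carries the actual arithmetic content of Hilbert reciprocity, including the awkward denominators $m_{v}/m$. Checking that the two agree under $\oplus_{v}\phi_{v}$ amounts to exhibiting a $K$-theoretic incarnation of Hilbert reciprocity itself, and will require tracking the normalization of each $\phi_{v}$ (in particular its interaction with Teichm\"{u}ller lifts of prime-to-$p$ roots of unity) uniformly across all places simultaneously.
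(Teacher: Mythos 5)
Your approach matches the paper's route (Theorem \ref{thm_GlobalMain}): build a global order whose local completions are the $\mathcal{R}_v$, feed it into the Thomason--Trobaugh localization sequence, identify each local boundary with the Hilbert symbol via the local theorem, take the Moore sequence as the bottom row, and conclude with a diagram lemma. Two points deserve attention.

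First, the step ``Since $\mathcal{R}$ has Krull dimension one, Zariski excision decomposes the relative $K$-groups as $\bigoplus_v K_{\mathfrak{\tilde m},\ast}(\mathcal{R}_v)$'' conceals a real issue: the localization sequence for $\operatorname{Spec}\mathcal{R}$ produces a sum indexed by the maximal ideals of $\mathcal{R}$, and since $\mathcal{R}$ is not the maximal order these need not biject with the finite places of $F$ (several $v$'s can collapse to one $\mathcal{P}\in\operatorname{Spec}\mathcal{R}$, just as a node of a singular curve has two branches). You must re-index from primes of $\mathcal{R}$ to places of $F$. The paper does this by passing to the completion $\widehat{\mathcal{R}}_\mathcal{P}$ (using the completion-invariance of $K$-theory with support), showing that it splits as $\prod_{v\mid\mathcal{P}}\mathcal{R}_v$, and relating maximal ideals of the normalization to minimal primes of the completion (Propositions \ref{prop_Subdecompose}, \ref{prop_NormalizAndCompletion} and Lemma \ref{lemma_2}). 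Your phrase ``flat base change $\mathcal{R}\to\mathcal{R}_v$'' points in the right direction --- $\mathcal{R}_v$ is a factor of a flat completion --- but the product decomposition of that completion into $v$-indexed pieces is precisely what needs justification.

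Second, your closing paragraph worries that matching the right square ``amounts to exhibiting a $K$-theoretic incarnation of Hilbert reciprocity itself.'' This overstates the difficulty as the proof is organized. The right vertical arrow is not independently constructed: it is \emph{defined} as the map induced on cokernels. Its existence requires that $(\cdot\frac{m_v}{m})\circ\bigoplus\phi_v\circ\partial=0$, which by the (already established) middle-square commutativity is exactly $(\cdot\frac{m_v}{m})\circ\bigoplus h_v=0$ --- that is, Moore's theorem, which the paper explicitly takes as input. The theorem here is not claiming a self-contained re-derivation of Hilbert reciprocity (see \S\ref{sect_SelfCont}); it is claiming that the localization sequence and the Moore sequence agree up to $2$-torsion, and once Moore's exactness is granted, the five lemma (or snake lemma, as the paper uses) closes the argument with no further arithmetic computation of the denominators $m_v/m$.
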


See Theorem \ref{thm_GlobalMain}, where we also elaborate on the formulation
\textquotedblleft up to $2$-primary torsion\textquotedblright. One can deduce
statements from this with a more classical flavour.

\begin{ncorollary}
Let $F$ be a number field with $\sqrt{-1}\notin F$. Then the statement of
Hilbert reciprocity up to sign, i.e.%
\[
\prod_{v\text{ }\operatorname*{noncomplex}}h_{v}(\alpha,\beta)^{\frac{m_{v}%
}{m}}=\pm1\qquad\text{for all}\qquad\alpha,\beta\in F^{\times}\text{,}%
\]
can be phrased as the property of being a complex ($d^{2}=0$) for a
localization sequence in $K$-theory.
\end{ncorollary}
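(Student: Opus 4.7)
The plan is to derive the corollary as a direct, essentially formal consequence of Theorem \ref{thm_Intro_2}. The key observation is that the top row of the diagram in that theorem is part of a $K$-theory localization sequence, hence (as the long exact sequence of a cofiber sequence) it is automatically a chain complex: the composition
\[
K_{2}(F)\overset{\partial}{\longrightarrow}\bigoplus_{v}K_{\mathfrak{\tilde{m}},1}(\mathcal{R}_{v})\longrightarrow SK_{1}(\mathcal{R})
\]
vanishes identically. This is exactly the $d^{2}=0$ property.

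From here I would chase this vanishing around the commutative square. After tensoring with $\mathbb{Z}[\frac{1}{2}]$, Theorem \ref{thm_Intro_2} makes the vertical arrows isomorphisms, so the corresponding composition in the bottom row must also vanish modulo $2$-primary torsion. That composition is
\[
K_{2}(F)\overset{h_{v}}{\longrightarrow}\bigoplus_{v}\mu(F_{v})\overset{\cdot\frac{m_{v}}{m}}{\longrightarrow}\mu(F),
\]
so its image is contained in $\mu(F)[2^{\infty}]$. By Matsumoto's theorem, $K_{2}(F)$ is generated by Steinberg symbols $\{\alpha,\beta\}$, and evaluating on such a symbol yields $\prod_{v}h_{v}(\alpha,\beta)^{m_{v}/m}\in\mu(F)[2^{\infty}]$. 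The hypothesis $\sqrt{-1}\notin F$ forces $\mu(F)[2^{\infty}]=\{\pm1\}$, which is precisely the stated reciprocity up to sign. Extending the product from ``finite places'' (as in the theorem) to ``noncomplex places'' (as in the corollary) only adds real-Archimedean contributions, which already take values in $\{\pm1\}$ and are hence absorbed into the sign ambiguity.

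For the reverse implication, I would run the same argument backwards: if one postulates $\prod_{v\text{ noncomplex}}h_{v}(\alpha,\beta)^{m_{v}/m}=\pm1$ for all Steinberg symbols, then the bottom row of Theorem \ref{thm_Intro_2} is a complex after inverting $2$, and via the vertical isomorphisms so is the top row, recovering the $d^{2}=0$ statement for the localization sequence. There is no serious obstacle at this stage; the substantive work is entirely contained in Theorem \ref{thm_Intro_2} (the construction of the optimal order and the identification of $\partial$ with the full Hilbert symbol). The only bookkeeping needed is to track the $2$-primary ambiguity and the real-Archimedean contributions, both of which account for the $\pm$ in the final statement.
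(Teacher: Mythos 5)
Your proposal is correct and follows essentially the same route as the paper: the paper's proof is simply ``Theorem \ref{thm_GlobalMain} shows that the said product is zero in $\mu(F)\left[\frac{1}{2}\right]$, but by our assumption the $2$-torsion part of $\mu(F)$ is only $\{\pm1\}$,'' which is precisely the diagram chase you spell out. Your additional remarks on Matsumoto's theorem, the passage from finite to noncomplex places, and the reverse implication are all consistent with (and implicit in) the paper's argument.
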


As we will explain in \S \ref{sect_GilletsProof}, Gillet proved Hilbert
reciprocity for function fields over finite fields using $K$-theory
localization by proving the more general Weil reciprocity theorem over all
base fields. Under the number field/function field dictionary one would hope
that this proof generalizes to give Hilbert reciprocity for number fields.
This fails for various reasons, one being that one needs the \textit{full}
Hilbert symbol and not just the tame symbol. Our method solves this. However,
the version of Hilbert reciprocity it proves $-$ if we only use $K$-theory
localization and nothing else $-$ then takes values in the group $SK_{1}$ of
the global (singular) order we refer to in Theorem \ref{thm_Intro_2}.

It seems difficult to compute this group without using tools which would also
go into conventional proofs of Hilbert reciprocity.

It is a common phenomenon in Algebraic $K$-theory that adding nil-thickenings
or singularities makes $K$-groups more complicated. For example, the
Contou-Carr\`{e}re symbol is a fattening up of the tame symbol for
nil-thickenings and in its tangent functor one can find the residue symbol for
log differential forms. The residue symbol only becomes visible thanks to
nil-thickenings. The Contou-Carr\`{e}re symbol is a different kind of
generalization of the tame symbol than the Hilbert symbol. Nonetheless, our
main idea is similar. If we think in terms of the function field analogy, the
orders $\mathcal{R}$ correspond to (affine models of) singular algebraic
curves and $\operatorname*{Spec}\mathcal{O}_{F}\rightarrow\operatorname*{Spec}%
\mathcal{R}$ is the normalization, the resolution of singularities. Using this
analogy, working with $\mathcal{R}$ is like pinching additional singularities
at the branching locus of $F$ over $\mathbb{Q}$. This then has the effect to
fatten up the $K$-theory with support at these singularities. As it turns out,
this additional complexity is precisely the wild part of the Hilbert symbol.
Under the normalization, going to $\mathcal{O}_{F}$, $K$-theory loses the wild
data and keeps only the tame symbol in the boundary map.

Digging a littler deeper, there is a hierarchy of rings $\mathcal{R}_{i}$
between the optimal and the maximal order,%
\[
\mathcal{R}\subseteq\mathcal{R}_{i}\subseteq\mathcal{O}_{F}%
\]
which \textquotedblleft can see\textquotedblright, as $i$ increases, less and
less of the wild part of the Hilbert symbol. We shall study these in more
detail in a future text.

Considering local metaplectic extensions, Toshiaki Suzuki has worked with
rings similar to what we do in \S \ref{sect_LocalTheory} with the purpose to
formulate modified Gauss sums \cite{MR1958068,MR2293361,MR3052658}. He does
not use $K$-theory, but probably our local optimal order agrees with his
subrings from \cite{MR1958068}.

Let me note that the recent papers \cite{MR3954369, clausennc} also produce,
without relying on global class field theory, a sequence%
\[
K_{2}(F)\overset{\partial}{\longrightarrow}\bigoplus_{v}\mu(F_{v}%
)\longrightarrow K_{2}(\mathsf{LCA}_{F})_{/div}\longrightarrow0\text{,}%
\]
which is then shown to be equivalent to Moore's sequence. The methods in these
papers are completely different from what we do in this paper.

\begin{acknowledgement}
I have learned about this problem mostly from Ivan Fesenko and Mikhail
Kapranov, to whom I\ am very grateful. Recently, renewed interest in such
questions was sparked by the fresh summer breeze of inspiration coming from
the writings of Dustin Clausen \cite{clausenJ, clausen}, even though they go
into a different direction from what we are doing in this paper. I thank D.
Clausen, M. Groechenig and M. Tamiozzo for their
feedback and various corrections.
\end{acknowledgement}

\section{What is the Hilbert symbol?}

\subsection{Local theory\label{sect_RecallHilbertSymbol}}

In this section we shall recall the definition and purpose of the Hilbert
symbol and the statement of the Hilbert reciprocity law. Readers familiar with
these concepts are invited to skip ahead.

Suppose $F$ is a local field of characteristic zero, i.e. a finite extension
of $\mathbb{Q}_{p}$ or $\mathbb{R}$. In this section, let us exclude the case
$F=\mathbb{C}$. Write $m:=\#\mu(F)<\infty$ for the number of roots of unity in
$F$.\footnote{We have excluded $\mathbb{C}$ because it is the only local field
with infinitely many roots of unity. However, it also has trivial Hilbert
symbol, so we do not miss out on anything by skipping this case.} Then
the\emph{ local reciprocity map} (also known as \emph{local Artin map}) is a
group homomorphism%
\[
\operatorname*{Art}\nolimits_{F}\colon F^{\times}\longrightarrow
\operatorname*{Gal}(F^{\operatorname{ab}}/F)\text{,}%
\]
where $F^{\operatorname{ab}}/F$ is the maximal abelian extension. It is easy
to describe for $F=\mathbb{Q}_{p}$ or $\mathbb{R}$, but for a general local
field setting up $\operatorname*{Art}\nolimits_{F}$ is too complicated than we
would wish to recall here. The Artin map becomes an isomorphism after
profinite completion\footnote{The Galois group is of course already profinite.
However, the left side changes. For $p$-adic fields the completion injects
into a strictly bigger group, while for the reals the completion map is
surjective. The kernel is the subgroup of positive reals.}, so it gives tight
control over the abelian field extensions of $F$. The \emph{Hilbert symbol} is
the map%
\begin{align}
h\colon F^{\times}\times F^{\times}  &  \longrightarrow\mu(F)\nonumber\\
(x,y)  &  \longmapsto\frac{\operatorname*{Art}_{F}(x)\sqrt[m]{y}}{\sqrt[m]{y}%
}\text{.} \label{lcimde1}%
\end{align}
We need to explain this: Since $F$ contains a primitive $m$-th root of unity,
the extension $F(\sqrt[m]{y})/F$ is Kummer, and in particular an abelian
extension. Only because of this, $\operatorname*{Art}_{F}(x)$ acts on it. All
Galois conjugates of $\sqrt[m]{y}$ differ by a root of unity, so the map takes
values in $\mu(F)$. It is well-defined because if we pick a different root for
$\sqrt[m]{y}$, then (as long as we use the same one in the numerator and
denominator of Equation \ref{lcimde1}) they both differ by the same root of
unity from our previous choices, which then cancels out in the fraction.

One checks that $h$ is bilinear and respects the Steinberg relation, so we
obtain a factorization%
\[
h\colon K_{2}(F)\longrightarrow\mu(F)\text{.}%
\]
In a nutshell, one could say that $h$ sets up a link between the local
reciprocity map and Kummer theory. Suppose $F$ is a finite extension of
$\mathbb{Q}_{p}$. As $\mu(F)$ is a torsion group, it splits canonically into
its $p$-primary part and a prime-to-$p$ part,%
\[
\mu(F)\cong\mu(F)[p^{\infty}]\oplus%
{\textstyle\bigoplus_{\ell\neq p}}
\mu(F)[\ell^{\infty}]\text{.}%
\]
We have no use for further distinguishing the primes in the prime-to-$p$
summand, we just write $\mu(F)[$prime-to-$p]$ for it all.

Explicit formulas for the $p$-part of the Hilbert symbol, i.e. its values in
the summand $\mu(F)[p^{\infty}]$, are very complicated. If we write
$m=p^{k}m_{0}$ with $(p,m_{0})=1$, then $F(\sqrt[m]{y^{p^{k}}})=F(\sqrt[m_{0}%
]{y})$ is at worst tamely ramified over $F$, and in this case the value of the
Hilbert symbol $(x,y^{p^{k}})$ in the summand $\mu(F)[p^{\infty}]$ is trivial.
Hence, such a case corresponds to (at worst) tame ramification. In this
situation, the Hilbert symbol agrees wth the tame symbol, see Lemma
\ref{lemma_HilbertBecomesTameSymbol} below. The values in the complementary
summand $\mu(F)[p^{\infty}]$ are therefore also sometimes called the wild part
of the Hilbert symbol, or even a \emph{wild symbol} formula.

\subsection{Global theory}

Suppose $F$ is a number field. For any noncomplex place write $F_{v}$ for its
completion at $v$. Define $m:=\#\mu(F)<\infty$ and $m_{v}:=\#\mu(F_{v}%
)<\infty$.

\begin{theorem}
[Moore sequence]\label{thm_MooreSequence}Let $F$ be a number field. Then the
sequence%
\begin{equation}
K_{2}(F)\longrightarrow\bigoplus_{v\text{ }\operatorname*{noncomplex}}%
\mu(F_{v})\overset{\cdot\frac{m_{v}}{m}}{\longrightarrow}\mu(F)\longrightarrow
0 \label{ldiva1}%
\end{equation}
is exact, where the first arrow is the Hilbert symbol, and the second is
taking the $\frac{m_{v}}{m}$-th power.
\end{theorem}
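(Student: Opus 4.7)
I would split the theorem into three assertions: (a) the second map is surjective, (b) the composition vanishes, and (c) every tuple annihilated by the second map lies in the image of the first.

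Part (a) is essentially automatic. For each noncomplex place $v$ the group $\mu(F_{v})$ is finite cyclic of order $m_{v}$, and since the embedding $F\hookrightarrow F_{v}$ induces $\mu(F)\hookrightarrow\mu(F_{v})$ we have $m\mid m_{v}$. The $(m_{v}/m)$-th power map sends a generator of $\mu(F_{v})$ to a generator of the unique subgroup of order $m$, which is $\mu(F)$; hence even a single summand already provides a surjection.

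Part (b) is Hilbert reciprocity itself, and my plan is to derive it from global class field theory. Unwinding the local definition (\ref{lcimde1}) shows that $h_{v}(x,y)^{m_{v}/m}$ computes the action of $\operatorname{Art}_{F_{v}}(x)$ on an $m$-th root of $y$ inside $F_{v}(\sqrt[m]{y})$. Multiplying over all noncomplex $v$ yields
\[
\prod_{v} h_{v}(x,y)^{m_{v}/m} \;=\; \frac{\bigl(\prod_{v}\operatorname{Art}_{F_{v}}(x)\bigr)\sqrt[m]{y}}{\sqrt[m]{y}},
\]
where the composition of automorphisms acts on the finite abelian Kummer extension $F(\sqrt[m]{y})/F$. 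Global Artin reciprocity asserts exactly that the diagonal embedding $F^{\times}\hookrightarrow\mathbb{A}_{F}^{\times}$ lands in the kernel of the idèlic Artin map, so this composition is trivial and (b) follows.

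Part (c), the heart of Moore's theorem, is where I expect essentially all of the difficulty. My plan is to pass to Galois cohomology. For each positive integer $n$ divisible by $m$, Tate's isomorphism $K_{2}(F)/n\cong H^{2}(F,\mu_{n}^{\otimes 2})$ and its local counterpart identify the Hilbert symbol with the cup product pairing of Kummer classes, post-composed with the Brauer invariant. Modulo $n$, the sequence of the theorem becomes a $\mu_{n}$-twist of the fundamental exact sequence of global class field theory
\[
0\longrightarrow\operatorname{Br}(F)\longrightarrow\bigoplus_{v}\operatorname{Br}(F_{v})\xrightarrow{\sum\operatorname{inv}_{v}}\mathbb{Q}/\mathbb{Z}\longrightarrow 0,
\]
whose middle exactness is precisely reciprocity for central simple algebras. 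Assembling the case $n=m$ via Poitou--Tate duality should yield the desired exactness up to a possible contribution from a divisible subgroup of $K_{2}(F)$. The main obstacle, and the step I would invest the most care in, is ruling out such a contribution: this amounts to showing that the wild kernel of $K_{2}(F)$ has no nontrivial divisible part, which is ultimately supplied by Garland's finiteness theorem for $K_{2}(\mathcal{O}_{F})$.
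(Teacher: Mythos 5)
The paper itself does not prove this statement; after the theorem it simply writes that Moore's exactness is established elsewhere and refers the reader to Chase--Waterhouse. So your sketch is not being compared against an argument in the text, only against the literature. Judged on its own, parts (a) and (b) are fine: (a) is the observation that $m\mid m_{v}$ and $\mu(F_{v})$ is cyclic, and your derivation of (b) from the global product formula for the Artin map, via
$h_{v}(x,y)^{m_{v}/m}=\bigl(\operatorname*{Art}_{F_{v}}(x)\sqrt[m]{y}\bigr)/\sqrt[m]{y}$
and the local--global compatibility of reciprocity maps on the abelian extension $F(\sqrt[m]{y})/F$, is the standard deduction. (You should at least remark that almost all terms are $1$ so the product is finite, but that is routine.)

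Part (c) is where the real work lies, and there your outline both misdiagnoses the difficulty and glosses over the genuine one. The worry about a divisible subgroup of $K_{2}(F)$ and the appeal to Garland's finiteness theorem are misplaced: every divisible element of $K_{2}(F)$ maps to the identity in each $\mu(F_{v})$ because those groups are finite, so divisibility can neither help nor obstruct surjectivity onto $\ker(\cdot^{m_{v}/m})$. (Whether the maximal divisible subgroup of $K_2(F)$ equals the wild kernel is a question about exactness on the \emph{left}, which is not part of Moore's statement; the paper discusses this separately via Hutchinson's work.) The difficulty that your plan does not address is concealed in the phrase ``assembling the case $n=m$.'' With $n=m$ one has $\mu_{m}\subset F$, so $\mu_{m}^{\otimes 2}$ has trivial Galois action and the Albert--Brauer--Hasse--Noether sequence together with Tate's $K_{2}/m\cong H^{2}(\mu_{m}^{\otimes 2})$ applies; but the local groups one then controls are only the subgroups $\mu_{m}\subset\mu(F_{v})$, whereas the middle term of Moore's sequence is the full $\bigoplus_{v}\mu(F_{v})=\bigoplus_{v}\mu_{m_{v}}$ with the $m_{v}$ unbounded. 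For $n>m$ the module $\mu_{n}^{\otimes 2}$ carries non-trivial Galois action over $F$ and no single choice of $n$ works at all places simultaneously, so one must argue place by place with an $n$ depending on the finitely supported tuple and combine this with a Grunwald--Wang-type existence statement for symbols with prescribed local Hilbert symbols. That lifting step is the actual content of Moore's theorem, and it is missing from your sketch.
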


The statement that this is a complex ($d^{2}=0$) amounts to the classical
Hilbert reciprocity law. Moore provided the slight strengthening that the
sequence is exact. We refer to \cite{MR311623} for a nice proof.

\begin{example}
\label{example_QuadraticReciprocity}For $F=\mathbb{Q}$ we have $\mu
(F):=\{\pm1\}$. The finite places are just the ordinary prime numbers. If $p$
is odd, $\frac{m_{p}}{m}=\frac{p-1}{2}$ is half the order of $\mathbb{F}%
_{p}^{\times}$ and coprime to $p$, so the Hilbert symbol is just the tame
symbol, but effectively only contributes values in $\mathbb{F}_{p}^{\times
}/\mathbb{F}_{p}^{\times2}\cong\{\pm1\}$ thanks to taking the $\frac{m_{p}}%
{m}$-th power. One finds that this composition $K_{2}(F)\rightarrow\{\pm1\}$
is given in terms of the Legendre symbol for all odd $p$. Only for
$p=2,\infty$ the Hilbert symbol differs from the tame symbol. The fact that
$d^{2}=0$ in Equation \ref{ldiva1} then becomes the Gauss quadratic
reciprocity law.
\end{example}

\section{Gillet's proof of Weil reciprocity\label{sect_GilletsProof}}

To motivate this paper, we must recall Gillet's very elegant proof of Hilbert
reciprocity for curves over finite fields. This is the counterpart, under the
number field/function field dictionary, of the Hilbert reciprocity law which
we discuss in the present article. Gillet first proves something more general.

\begin{theorem}
[Weil reciprocity]Let $k$ be a field. Let $X/k$ be an integral
smooth\footnote{Gillet's proof actually does not require smoothness if one
works with $G$-theory localization rather than $K$-theory. But discussing this
detour would distract us too much from our main story. We favour $K$-theory
for good reasons over $G$-theory in this paper.} proper curve with function
field $F:=k(X)$. Then the composition
\[
K_{n}(F)\overset{\partial}{\longrightarrow}\bigoplus_{v}K_{n-1}(\kappa
(v))\overset{N_{\kappa(v)/k}}{\longrightarrow}K_{n-1}(k)
\]
is zero, where $\partial$ denotes the boundary map of the Algebraic $K$-theory
localization sequence with respect to zero-dimensional support, $v$ runs
through the places of $X$ which are trivial over $k$, and $N_{\kappa(v)/k}$ is
the norm map.
\end{theorem}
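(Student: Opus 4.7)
The plan is to apply Quillen's localization sequence to the open immersion $\operatorname{Spec} F\hookrightarrow X$ and then push forward along the structure map $\pi\colon X\to\operatorname{Spec} k$ using properness, exploiting the fact that $\pi_{\ast}\circ i_{v,\ast}=N_{\kappa(v)/k}$ for each closed immersion $i_{v}\colon\operatorname{Spec}\kappa(v)\hookrightarrow X$.

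First I would invoke Quillen's $K$-theory localization sequence for the complementary closed--open pair consisting of the set of all closed points and the generic point $\operatorname{Spec} F$. Because $X$ is smooth of dimension one, $K$-theory agrees with $G$-theory on $X$, and the sequence reads
\[
\cdots\to K_{n}(X)\to K_{n}(F)\xrightarrow{\partial}\bigoplus_{v}K_{n-1}(\kappa(v))\xrightarrow{\bigoplus_{v} i_{v,\ast}}K_{n-1}(X)\to\cdots .
\]
Exactness already gives $\bigl(\bigoplus_{v} i_{v,\ast}\bigr)\circ\partial=0$, so the entire output of $\partial$ lands in the subgroup of cycles on $X$ that vanish after proper pushforward to $X$.

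Next, properness of $X/k$ furnishes a functorial proper pushforward $\pi_{\ast}\colon K_{n-1}(X)\to K_{n-1}(k)$ in higher $K$-theory, built from the derived direct image of coherent sheaves. Functoriality applied to the finite composition $\pi\circ i_{v}\colon\operatorname{Spec}\kappa(v)\to\operatorname{Spec} k$ identifies $\pi_{\ast}\circ i_{v,\ast}$ with the pushforward along a finite field extension, which is the classical norm $N_{\kappa(v)/k}$. Applying $\pi_{\ast}$ to the vanishing composition above therefore gives
\[
\sum_{v} N_{\kappa(v)/k}\circ\partial_{v}=\pi_{\ast}\circ\Bigl(\bigoplus_{v} i_{v,\ast}\Bigr)\circ\partial=0,
\]
which is exactly the claimed Weil reciprocity.

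The main obstacle is not any individual calculation but the availability and compatibility of the two structural ingredients: the localization sequence with its boundary $\partial$ on one hand, and the proper pushforward with its functoriality under composition on the other. Once these are in place, the identification of $\pi_{\ast}\circ i_{v,\ast}$ with $N_{\kappa(v)/k}$ is the only nontrivial residual step, and it is standard in Quillen's framework because on a field the $K$-theoretic pushforward along a finite extension is by construction the norm on finitely generated modules. The smoothness hypothesis only enters through the identification $K\cong G$; working throughout with $G$-theory would remove it, recovering Gillet's proof in the possibly singular case mentioned in the footnote.
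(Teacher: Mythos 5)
Your proposal is essentially the same proof as the paper's (Gillet's argument): localization to kill the composite $\bigl(\bigoplus_{v} i_{v,\ast}\bigr)\circ\partial$, properness of $\pi\colon X\to\operatorname{Spec}k$ to get a well-defined pushforward $\pi_{\ast}$ on $K$-theory, and functoriality applied to $\pi\circ i_{v}$ to identify $\pi_{\ast}\circ i_{v,\ast}$ with the norm. The only point where the paper is a bit more careful is that the generic point $\operatorname{Spec}F$ is not literally an open subscheme of $X$, so one cannot "invoke the localization sequence for the closed--open pair consisting of all closed points and $\operatorname{Spec}F$" in one go; instead one runs the localization sequence for finite zero-dimensional closed $Z\subseteq X$, identifies $K_{Z}(X)\cong\bigoplus_{P\in Z}K(\kappa(v_{P}))$ by devissage (this is where smoothness/regularity enters, which you phrased as $K\cong G$), and then passes to the filtered colimit over all such $Z$, using that $K$-theory commutes with filtered colimits, to produce the direct sum $\bigoplus_{v}K_{n-1}(\kappa(v))$ and the map to $K_{n}(F)$. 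With that colimit step spelled out, your argument matches the paper's.
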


The set of places $v$ can be identified with the set of codimension one
irreducible closed subschemes, and equivalently with the set of closed points
of $X$. The local rings $(\mathcal{O}_{X,P},\mathfrak{m}_{P})$ are DVRs and
the resulting valuation defines a place $v$ on the function field, and
conversely these valuations pin down a local ring and therefore a closed point.

\begin{proof}
[Proof (Gillet)]We shall freely use results from Algebraic $K$-theory. The
reader will find in Appendix \S \ref{sect_appendix_Kthy} a summary, in
blackbox format, of what we need. Following Gillet, we use the localization
sequence of Algebraic $K$-theory applied to $X$ and $Z$ being any
zero-dimensional integral closed subscheme. We get the fiber sequence%
\begin{equation}
K_{Z}(X)\longrightarrow K(X)\longrightarrow K(X-Z)\text{.} \label{lcff1}%
\end{equation}
Note that we may split $Z$ into its disjoint connected components and get
$K_{Z}(X)=\bigoplus_{P}K_{P}(X)$, where $P$ runs through the finitely many
points of $Z$. Finally, since $X$ is smooth, it is regular, and by devissage
we obtain $K_{P}(X)\cong K(P)$ (i.e. regard $P$ as a zero-dimensional closed
subscheme). Note that this means that $K(P)=K(\kappa(v))$, where
$\kappa(v)=\mathcal{O}_{X,P}/\mathfrak{m}_{P}$ is the residue field of the DVR
belonging to the valuation $v$. Next, we arrange this in a direct system
running over $Z\subseteq Z^{\prime}$, partially ordered under containment. As
$K$-theory commutes with filtering colimits, we obtain the top row in%
\begin{equation}%
\xymatrix{
\cdots\ar[r] & K_n(F) \ar[dr] \ar[r]^-{\partial} & \bigoplus_v K_{n-1}%
(\kappa(v)) \ar[d]^{N_{\kappa(v)/k}} \ar[r] & K_{n-1}(X) \ar[r] \ar
[dl]^{R\pi_{\ast}} & \cdots\\
&                            & K_{n-1}(k) &
}
\label{lcffA1}%
\end{equation}
Since the structure map $\pi:X\rightarrow k$ is proper, the derived
pushforward $R\pi_{\ast}$ induces a map $K(X)\rightarrow K(k)$. Since the
closed embeddings $i_{P}\colon P\hookrightarrow X$ are also proper, $\pi\circ
i_{P}$ induces maps $K(P)\rightarrow K(k)$. Since the composition of two
consecutive arrows in a complex is zero ($d^{2}=0$), the existence of the
right diagonal arrow implies that $\sum_{P}\pi_{\ast}i_{P\ast}=0$. Upon
unravelling this, the $\pi\circ i_{P}$ correspond to the pushforward along the
finite morphisms $\operatorname*{Spec}\kappa(v)\rightarrow\operatorname*{Spec}%
k$, which is nothing but the norm map on $K$-theory.
\end{proof}

The above proof, which uses only fairly general tools from $K$-theory and the
existence of $R\pi_{\ast}$ now gives a complete proof of Hilbert reciprocity
for curves over finite fields.

\begin{theorem}
[Hilbert reciprocity for function fields]\label{thm_WeilForNTwoImpliesHilbert}%
Suppose $X/\mathbb{F}_{q}$ is a geometrically integral smooth proper curve
with function field $F:=\mathbb{F}_{q}(X)$. Then Hilbert reciprocity
holds:\ The composition of%
\begin{equation}
K_{2}(F)\overset{h_{v}}{\longrightarrow}\bigoplus_{v}\mu(F_{v})\overset
{\cdot\frac{m_{v}}{m}}{\longrightarrow}\mu(F) \label{lhh2}%
\end{equation}
is zero, where $m_{v}:=\#\mu(F_{v})$ and $m:=\mu(F)$.
\end{theorem}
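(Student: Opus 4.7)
The plan is to reduce the statement to Gillet's Weil reciprocity theorem in the case $n=2$ with base field $k = \mathbb{F}_q$, and then match notations. Two identifications are needed: the $K$-theoretic boundary $\partial_v\colon K_2(F) \to \kappa(v)^\times$ must coincide with the Hilbert symbol $h_v$ after the canonical isomorphism $\mu(F_v) \cong \kappa(v)^\times$, and the residue-field norm $N_{\kappa(v)/\mathbb{F}_q}$ must coincide with the $m_v/m$-th power map between the cyclic groups $\kappa(v)^\times$ and $\mathbb{F}_q^\times$.

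For the first identification, use that $F_v$ has equal characteristic $p$ with finite residue field. The principal-unit group $U^{(1)}$ is $p$-torsion-free since $(1+x)^p = 1+x^p$ and $x$ is topologically nilpotent, so every root of unity in $F_v$ is the Teichm\"{u}ller lift of an element of $\kappa(v)^\times$. In particular $\mu(F_v) = \kappa(v)^\times$, of order $m_v = q^{f_v} - 1$ coprime to $p$, and the Kummer extensions $F_v(\sqrt[m_v]{y})/F_v$ entering the definition of $h_v$ are all tamely ramified. The (forthcoming) Lemma \ref{lemma_HilbertBecomesTameSymbol} referenced in \S \ref{sect_RecallHilbertSymbol} then identifies $h_v$ with $\partial_v$.

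For the second identification, geometric integrality of $X$ over $\mathbb{F}_q$ forces the field of constants of $F$ to be exactly $\mathbb{F}_q$, whence $\mu(F) = \mathbb{F}_q^\times$ and $m = q-1$. For a closed point $v$ with residue field $\mathbb{F}_{q^{f_v}}$, the norm is the product over Frobenius conjugates $N_{\kappa(v)/\mathbb{F}_q}(x) = x \cdot x^q \cdots x^{q^{f_v-1}} = x^{(q^{f_v}-1)/(q-1)} = x^{m_v/m}$, which is exactly the factor appearing in the Moore sequence.

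With these two matches in place, Weil reciprocity yields $\sum_v N_{\kappa(v)/\mathbb{F}_q}(\partial_v \xi) = 0$ for every $\xi \in K_2(F)$, and after the translations above this is precisely the statement that the composition in Equation \ref{lhh2} vanishes. The only essentially new input is the tame/Hilbert identification; once that is in hand, the remainder is a cosmetic repackaging of Gillet's diagram. It is worth flagging that the argument depends critically on working in characteristic $p$, since no $p$-primary (i.e.\ wild) summand of the Hilbert symbol ever appears $-$ precisely the obstruction that must be overcome when passing to number fields in the sequel.
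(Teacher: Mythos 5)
Your proof takes essentially the same route as the paper: apply Gillet's Weil reciprocity with $n=2$, identify $K_1(\kappa(v))$ with $\mu(F_v)$ by noting the absence of $p$-power roots of unity in characteristic $p$ (so the Hilbert symbol coincides with the tame symbol), identify $K_1(\mathbb{F}_q)$ with $\mu(F)$ via geometric integrality, and compute the residue-field norm as the $m_v/m$-th power using the Frobenius. The extra Freshman's-dream justification you give for $U^{(1)}$ being $p$-torsion-free is a fine elaboration of the point the paper simply asserts, and while Lemma \ref{lemma_HilbertBecomesTameSymbol} is stated in the paper for characteristic zero, the paper itself notes the definitions carry over, so invoking its content here is harmless.
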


We have only discussed the Hilbert symbol in characteristic zero in
\S \ref{sect_RecallHilbertSymbol}, but the same definitions go through in
general. We will not work in positive characteristic anywhere else in this paper.

\begin{proof}
Write $k:=\mathbb{F}_{q}$ for the base field. We use Weil reciprocity for
$n=2$ and obtain%
\begin{equation}
K_{2}(F)\overset{\partial}{\longrightarrow}\bigoplus_{v}K_{1}(\kappa
(v))\overset{N_{\kappa(v)/k}}{\longrightarrow}K_{1}(k)\text{.} \label{lhh1}%
\end{equation}
Since $F$ is a function field of a geometrically integral curve, we have the
sequence $k^{\times}\hookrightarrow F^{\times}\overset{v}{\rightarrow
}\bigoplus\mathbb{Z}$ sending a function to its Weil divisor, and since
$\mathbb{Z}$ has no non-trivial torsion, it follows that all roots of unity
must come from $k^{\times}$, but since $k=\mathbb{F}_{q}$, all its non-zero
elements are roots of unity. Hence, $K_{1}(k)=\mu(F)$. Similarly, each
$\kappa(v)/\mathbb{F}_{q}$ is a finite field extension, and thus $\kappa(v)$
is itself a finite field and therefore $K_{1}(\kappa(v))=\mu(F_{v})$. Note
that since $k$ has characteristic $p>0$ and we have $\mu(F_{v})=\kappa
(v)^{\times}\oplus\mu(F_{v})[p^{\infty}]$ by the Teichm\"{u}ller lift, we must
have $\mu(F_{v})[p^{\infty}]=1$ as there cannot be non-trivial $p$-power roots
of unity in a $\operatorname*{char}p>0$ field. This also implies that the
Hilbert symbol agrees with the tame symbol. By now, Equation \ref{lhh1} has
almost completely transformed into Equation \ref{lhh2}. It remains to show
that $N_{\kappa(v)/k}$ agrees with multiplication by $\frac{m_{v}}{m}$, but%
\[
\frac{m_{v}}{m}=\frac{q^{[\kappa(v):k]}-1}{q-1}=1+q+q^{2}+\cdots
+q^{[\kappa(v):k]-1}\text{,}%
\]
and the extension $\kappa(v)/k$ is Galois and its Galois group generated by
the Frobenius $x\mapsto x^{q}$. Thus, taking the $\frac{m_{v}}{m}$-th power
does the same as taking the norm.
\end{proof}

Inspired by this beautifully short proof, people have been wondering about the
possibility to prove Hilbert reciprocity for number fields using the same idea.

For the sake of completeness, let us quickly go through the same steps in the
number field setting and see how and why they fail.

Let $F$ be a number field and take $X:=\operatorname*{Spec}\mathcal{O}_{F}$,
where $\mathcal{O}_{F}$ denotes the ring of integers. The localization
sequence of Equation \ref{lcff1} works fine, where again $Z$ is running
through the direct system of zero-dimensional closed subschemes partially
ordered by inclusion. We obtain the exact sequence%
\[
\cdots\longrightarrow K_{n}(F)\overset{\partial}{\longrightarrow}\bigoplus
_{v}K_{n-1}(\kappa(v))\overset{\beta_{n}}{\longrightarrow}K_{n-1}%
(\mathcal{O}_{F})\longrightarrow\cdots\text{,}%
\]
where $v$ runs through the finite places. What should the replacement for
$\pi$ in Diagram \ref{lcffA1} be? We might speculate about $\pi
:\operatorname*{Spec}\mathcal{O}_{F}\rightarrow\operatorname*{Spec}\mathbb{Z}$
or about some map to a conjectural $\mathbb{F}_{1}$, but there is a problem we
cannot avoid: Soul\'{e} has proven that for all even $n\geq2$ the maps
$\beta_{n}$ are zero \cite[Chapter V, Theorem 6.8]{MR3076731}. So even if we
can set up some magical diagram (with $n\geq2$ even)%
\[%
\xymatrix{
\cdots\ar[r] & K_n(F) \ar[dr] \ar[r]^-{\partial} & \bigoplus_v K_{n-1}%
(\kappa(v)) \ar[d] \ar[r]^-{0} & K_{n-1}(\mathcal{O}_F) \ar[r] \ar@
{..>}[dl] & \cdots\\
&                            & K_{n-1}(?) &
}%
\]
for some carefully chosen \textquotedblleft$?$\textquotedblright, we see that
for $n=2$ the statement of our theorem is void because already the individual
maps $\pi_{\ast}\circ i_{P\ast}$ are necessarily zero. So clearly their sum is
zero. We learn nothing new.

Is this because we have no infinite places here and some Arakelov theory would
solve the problem? At first, one could say yes. The cited result of Soul\'{e}
also tells us that for affine curves over finite fields, e.g.
$\operatorname*{Spec}\mathcal{O}_{X}(X-pt)$, the same vanishing holds. But we
could take a number field without real places. Since Hilbert reciprocity among
the infinite places only sees the real ones, this leads us to situations where
Hilbert reciprocity is exclusively a statement about a cancellation of values
at \textit{finite} places, but we still have the same issue. So this is not
the core of the problem.

More crucially, we only see the \textit{tame} symbol here. Consider for
example the extensions $\mathbb{Q}_{p}(\zeta_{p^{m}})/\mathbb{Q}_{p}$. These
are totally ramified. As $m\rightarrow+\infty$, the group $\mu(\mathbb{Q}%
_{p}(\zeta_{p^{m}}))=\left\langle \zeta_{p^{m}}\right\rangle \times
\mathbb{F}_{q}^{\times}$ grows arbitrarily large, yet the receptacle of the
tame symbol%
\[
K_{2}(\mathbb{Q}_{p}(\zeta_{p^{m}}))\overset{\partial}{\longrightarrow}%
K_{1}(\mathbb{F}_{p})=\mathbb{F}_{p}^{\times}%
\]
remains unchanged. This shows how the discrepancy between the Hilbert symbol
and the tame symbol can grow arbitrarily large.

\begin{example}
[continues Example \ref{example_QuadraticReciprocity}]Another good example is
the quadratic reciprocity law at $p=2$. The tame symbol here is a map
$K_{2}(\mathbb{Q}_{2})\rightarrow\mathbb{F}_{2}^{\times}=\{1\}$, and contains
no information. The Hilbert symbol of $\mathbb{Q}_{2}$ however is non-trivial.
\end{example}

Even once we know that the Hilbert reciprocity law holds, it does not really
imply anything about a reciprocity law for tame symbols, so it is not too
shocking that Soul\'{e}'s result reveals that considering the tame symbol
yields no interesting invariant whatsoever.

\section{Local theory\label{sect_LocalTheory}}

Let $F/\mathbb{Q}_{p}$ be a finite extension. We write $F_{0}/\mathbb{Q}_{p}$
for its maximal unramified subextension. Denote by $(\mathcal{O}%
_{F},\mathfrak{m})$ the local ring which is the ring of integers of $F$.
Analogously, write $(\mathcal{O}_{0},p)$ for the ring of integers of $F_{0}$.

We recall that we may realize $\mathcal{O}_{F}=\mathcal{O}_{0}[T]/(f)$, where
$f$ is an Eisenstein polynomial for the prime $p$, \cite[(5.6)]{MR1972204}.
The image of the variable, $\pi:=\overline{T}$, is a uniformizer of $F$, so
that $\mathfrak{m}=(\pi)\mathcal{O}_{F}$. Conversely, the minimal polynomial
of any uniformizer of $F$ over $\mathcal{O}_{0}$ will be a possible choice for
the Eisenstein polynomial $f$, \cite[(5.12)]{MR1972204}.

As a result of this presentation, $\mathcal{O}_{F}$ is a finitely generated
free $\mathcal{O}_{0}$-module, and we may take%
\begin{equation}
\mathcal{O}_{F}=\mathcal{O}_{0}\left\langle 1,\pi,\ldots,\pi^{e-1}%
\right\rangle \label{lc1}%
\end{equation}
as a basis, where $e:=\deg f$ is the ramification index of $F/\mathbb{Q}_{p}$.
Moreover, recall that there is a non-canonical decomposition%
\begin{equation}
F^{\times}\cong\left\langle \pi^{\mathbb{Z}}\right\rangle \times\left\langle
\zeta_{q-1}\right\rangle \times U_{F}^{1}\text{,} \label{lc1a}%
\end{equation}
where $\zeta_{q-1}$ is a primitive $(q-1)$-st root of unity, and%
\[
U_{F}^{r}:=1+\mathfrak{m}^{r}\mathcal{O}_{F}\qquad\text{(for }r\geq1\text{)}%
\]
denotes the $r$-th higher unit group.

Let us consider the subrings\footnote{This is a special case of a general
observation: If $R\subseteq S$ is a subring of any ring $S$, and $I$ any ideal
of $S$, then $R+I$ is a ring.}%
\begin{equation}
R_{m}:=\mathcal{O}_{0}+\mathfrak{m}^{m}\mathcal{O}_{F}\qquad\qquad\text{(for
}m\geq0\text{)} \label{lcqx1}%
\end{equation}
inside $\mathcal{O}_{F}$.

\begin{lemma}
\label{lemma_Rm_main}The ring $R_{m}$ is a Noetherian local domain with field
of fractions $F$. Its maximal ideal is%
\begin{equation}
\mathfrak{\tilde{m}}_{m}=\left\{
\begin{array}
[c]{ll}%
p\mathcal{O}_{0}+\mathfrak{m}^{m}\mathcal{O}_{F} & \text{for }m\geq1\\
\mathfrak{m}\mathcal{O}_{F} & \text{for }m=0\text{.}%
\end{array}
\right.  \label{lc1c}%
\end{equation}
The ring $R_{m}$ is a module-finite $\mathcal{O}_{0}$-algebra of finite index
$[\mathcal{O}_{F}:R_{m}]<\infty$. Moreover, $R_{m}$ is a compact subring of
$\mathcal{O}_{F}$ (in the valuation topology). The unit group is
non-canonically isomorphic to%
\begin{equation}
R_{m}^{\times}\cong\left\langle \zeta_{q-1}\right\rangle \times
(1+\mathfrak{\tilde{m}}_{m})\text{.} \label{lc1aa}%
\end{equation}
If $1\leq m\leq e$ (with $e:=e_{F/\mathbb{Q}_{p}}$ the absolute ramification
index), $\mathfrak{\tilde{m}}_{m}=\mathfrak{m}^{m}\mathcal{O}_{F}$.
\end{lemma}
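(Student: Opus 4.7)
The plan is to dispose of $m=0$ first (where $R_{0}=\mathcal{O}_{F}$ and every assertion is standard), then assume $m\geq 1$ and work throughout with the $\mathcal{O}_{0}$-basis $1,\pi,\ldots,\pi^{e-1}$ of $\mathcal{O}_{F}$ from Equation \ref{lc1}. Since $\pi^{e}=p\cdot(\text{unit})$ lies in $p\mathcal{O}_{0}$, multiplying the basis by $\pi^{m}$ yields $\mathfrak{m}^{m}\mathcal{O}_{F}=\pi^{m}\mathcal{O}_{F}=\mathcal{O}_{0}\langle\pi^{m},\pi^{m+1},\ldots,\pi^{m+e-1}\rangle$, so $R_{m}=\mathcal{O}_{0}\langle 1,\pi^{m},\ldots,\pi^{m+e-1}\rangle$ is a finitely generated $\mathcal{O}_{0}$-submodule of $\mathcal{O}_{F}$. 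Module-finiteness over the Noetherian ring $\mathcal{O}_{0}$ gives Noetherianness of $R_{m}$; the quotient $\mathcal{O}_{F}/R_{m}$ is a quotient of the finite module $\mathcal{O}_{F}/\mathfrak{m}^{m}\mathcal{O}_{F}$, so the index is finite. Compactness follows because $\mathcal{O}_{0}$ and $\mathfrak{m}^{m}\mathcal{O}_{F}$ are each compact in the valuation topology, and $R_{m}$ is the image of their product under continuous addition. Being a domain is immediate from $R_{m}\subseteq F$, and since $\pi^{m},\pi^{m+1}\in R_{m}$, the fraction field contains $\pi$ and therefore equals $F$.

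The crux is locality. First I would verify that $\tilde{\mathfrak{m}}_{m}=p\mathcal{O}_{0}+\mathfrak{m}^{m}\mathcal{O}_{F}$ is closed under multiplication by $R_{m}$ by expanding $(p\alpha+\pi^{m}\beta)(a+\pi^{m}b)$ into four terms, each of which lies in $p\mathcal{O}_{0}+\mathfrak{m}^{m}\mathcal{O}_{F}$ (using $p\cdot\mathfrak{m}^{m}\mathcal{O}_{F}\subseteq\mathfrak{m}^{m}\mathcal{O}_{F}$ and $\mathfrak{m}^{2m}\mathcal{O}_{F}\subseteq\mathfrak{m}^{m}\mathcal{O}_{F}$). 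Properness is immediate from $\tilde{\mathfrak{m}}_{m}\subseteq\mathfrak{m}\mathcal{O}_{F}$, which rules out $1\in\tilde{\mathfrak{m}}_{m}$. To see that every element outside $\tilde{\mathfrak{m}}_{m}$ is a unit in $R_{m}$, I would write such an element as $a+\pi^{m}b$ with $a\in\mathcal{O}_{0}^{\times}$, factor as $a\cdot(1+a^{-1}\pi^{m}b)$, and invert the second factor by the geometric series $\sum_{k\geq 0}(-a^{-1}\pi^{m}b)^{k}$. This converges in $\mathcal{O}_{F}$, and its $k\geq 1$ terms all lie in $\mathfrak{m}^{km}\mathcal{O}_{F}\subseteq\mathfrak{m}^{m}\mathcal{O}_{F}$, so the sum lies in $1+\mathfrak{m}^{m}\mathcal{O}_{F}\subseteq R_{m}$. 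Hence $\tilde{\mathfrak{m}}_{m}$ is precisely the set of non-units, so $R_{m}$ is local with maximal ideal $\tilde{\mathfrak{m}}_{m}$.

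The unit-group decomposition (\ref{lc1aa}) then follows by combining the Teichm\"{u}ller decomposition $\mathcal{O}_{0}^{\times}\cong\langle\zeta_{q-1}\rangle\times(1+p\mathcal{O}_{0})$ with the description $R_{m}^{\times}=\{a+\pi^{m}b\mid a\in\mathcal{O}_{0}^{\times}\}$ just obtained: writing $a=\zeta(1+p\alpha)$ and expanding shows every unit is uniquely of the form $\zeta\cdot(1+x)$ with $x\in\tilde{\mathfrak{m}}_{m}$. For the last assertion, when $1\leq m\leq e$ the bound $v(p)=e\geq m$ gives $p\mathcal{O}_{0}\subseteq p\mathcal{O}_{F}=\mathfrak{m}^{e}\mathcal{O}_{F}\subseteq\mathfrak{m}^{m}\mathcal{O}_{F}$, absorbing the $p\mathcal{O}_{0}$ summand so that $\tilde{\mathfrak{m}}_{m}=\mathfrak{m}^{m}\mathcal{O}_{F}$. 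The only real obstacle I anticipate is the inversion step: one must check that the geometric series lands inside the potentially much smaller ring $R_{m}$, not merely inside $\mathcal{O}_{F}$, and this is exactly where the hypothesis $m\geq 1$ is used essentially.
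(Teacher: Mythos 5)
Your proof is correct and reaches all the claimed conclusions, but the locality/unit-group step is handled by a genuinely different, and in fact slightly leaner, argument than the paper's. The paper takes $x\in R_{m}\setminus\tilde{\mathfrak{m}}_{m}$, uses the decomposition $F^{\times}\cong\langle\pi^{\mathbb{Z}}\rangle\times\langle\zeta_{q-1}\rangle\times U_{F}^{1}$ to write $x=\pi^{n}\zeta_{q-1}^{i}u$, rules out $n\geq1$ by contradiction, and then runs a geometric series for $(x\zeta_{q-1}^{-i})^{-1}$ whose partial sums lie in $R_{m}$; to conclude that the limit stays in $R_{m}$ it must first establish that $R_{m}$ is compact, hence closed. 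You instead exploit the additive presentation $R_{m}=\mathcal{O}_{0}+\mathfrak{m}^{m}\mathcal{O}_{F}$ directly: writing $x=a+\pi^{m}b$ and observing that $x\notin\tilde{\mathfrak{m}}_{m}$ forces $a\in\mathcal{O}_{0}^{\times}$, you pull the unit $a$ out and invert $1+a^{-1}\pi^{m}b$ by a geometric series whose tail lands in $\mathfrak{m}^{m}\mathcal{O}_{F}$, needing only that $\mathfrak{m}^{m}\mathcal{O}_{F}$ is closed (it is clopen), not compactness of $R_{m}$. The paper deduces Equation \ref{lc1aa} as a byproduct of the same computation; you deduce it from the Teichm\"{u}ller splitting of $\mathcal{O}_{0}^{\times}$ applied to the coefficient $a$ — worth filling in is the injectivity, i.e.\ $\langle\zeta_{q-1}\rangle\cap(1+\tilde{\mathfrak{m}}_{m})=1$, which follows since the first factor has order prime to $p$ and the second is pro-$p$. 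You also obtain module-finiteness and Noetherianness more directly from the explicit generating set $1,\pi^{m},\ldots,\pi^{m+e-1}$, whereas the paper invokes Equation \ref{lc1} only at the end. The fraction-field step is a bit terse: after noting $\pi\in\operatorname{Frac}R_{m}$ you should also observe $\mathcal{O}_{0}\subseteq R_{m}$, so $\operatorname{Frac}R_{m}\supseteq F_{0}[\pi]=F$, but this is a trivial completion.
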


\begin{proof}
Since $R_{0}=\mathcal{O}_{F}$, the case $m=0$ is clear. From now on assume
$m\geq1$. It is clear that the $R_{m}=\mathcal{O}_{0}+\mathfrak{m}%
^{m}\mathcal{O}_{F}$ are subrings of $\mathcal{O}_{F}$. It follows that
$R_{m}$ is a domain. Next, it is immediate to check that $\mathfrak{\tilde{m}%
}_{m}\subseteq R_{m}$ is an ideal. Since $\mathfrak{\tilde{m}}_{m}%
\subseteq\mathfrak{m}$, it does not contain $1$, so it is a proper ideal in
$R_{m}$. If we can show that all elements in $R_{m}\setminus\mathfrak{\tilde
{m}}_{m}$ are units, it will follow that $\mathfrak{\tilde{m}}_{m}$ is the
unique maximal ideal. Before we do this, note that the decomposition of
Equation \ref{lc1a} applied to $F_{0}$ shows that $\zeta_{q-1}\in
\mathcal{O}_{0}$ (since both $F$ and $F_{0}$ have the same residue field;
alternatively retrieve $\zeta_{q-1}$ as the Teichm\"{u}ller lift of a
primitive $(q-1)$-st root of unity of the finite residue field). Suppose $x\in
R_{m}\setminus\mathfrak{\tilde{m}}_{m}$. Using Equation \ref{lc1a} for $F$ we
write it as%
\begin{equation}
x=\pi^{n}\cdot\zeta_{q-1}^{i}\cdot u\qquad\text{with}\qquad u\in U_{F}%
^{1}\text{.} \label{lc1b}%
\end{equation}
Since $x\in\mathcal{O}_{F}$, we have $n\geq0$. Assume $n\geq1$. Then%
\[
x\in R_{m}\cap\mathfrak{m}\mathcal{O}_{F}=(\mathcal{O}_{0}+\mathfrak{m}%
^{m}\mathcal{O}_{F})\cap\mathfrak{m}\mathcal{O}_{F}=\left(  \mathcal{O}%
_{0}\cap\mathfrak{m}\mathcal{O}_{F}\right)  +\mathfrak{m}^{m}\mathcal{O}%
_{F}=\mathfrak{\tilde{m}}_{m}%
\]
since $m\geq1$ and $\mathcal{O}_{0}\cap\mathfrak{m}\mathcal{O}_{F}%
=p\mathcal{O}_{0}$. Contradiction. Hence, we must have $n=0$. As $\zeta
_{q-1}\in\mathcal{O}_{0}$, we deduce%
\begin{equation}
x\zeta_{q-1}^{-i}-1\in\mathfrak{m}\mathcal{O}_{F}\cap R_{m} \label{lc1bv}%
\end{equation}
(it lies in $\mathfrak{m}\mathcal{O}_{F}$ by Equation \ref{lc1b}, and in
$R_{m}$ since $x$ and $\zeta_{q-1}^{-1}$ lie in $R_{m}$ and $R_{m}$ is closed
under ring operations). Then the geometric series%
\begin{equation}
\frac{1}{x\zeta_{q-1}^{-i}}=\frac{1}{1-\left(  1-x\zeta_{q-1}^{-i}\right)
}=1+\sum_{l\geq1}\left(  1-x\zeta_{q-1}^{-i}\right)  ^{l}:=\tilde{u}
\label{lc1bb}%
\end{equation}
is convergent in $\mathcal{O}_{F}$. As the residue field of $\mathcal{O}_{F}$
is finite, $\mathcal{O}_{F}$ is compact in the valuation topology. Same for
$\mathcal{O}_{0}$. As $R_{m}$ is the image of the compact space $\mathcal{O}%
_{0}\times\mathcal{O}_{F}$ under an obvious map, $R_{m}$ is compact. In
particular, $R_{m}$ is necessarily complete as a metric space. As all partial
sums in Equation \ref{lc1bb} lie in $R_{m}$, we deduce that the limit
$\tilde{u}$ must also lie in $R_{m}$. Returning to our original thread of
thoughts, Equation \ref{lc1bb} shows that $x^{-1}=\zeta_{q-1}^{-i}\tilde{u}$
with $\tilde{u}\in R_{m}$.\ This completes the proof that $\mathfrak{\tilde
{m}}_{m}$ is the unique maximal ideal. We conclude that $R_{m}$ is a local
ring. As this also implies that $R_{m}^{\times}=R_{m}\setminus\mathfrak{\tilde
{m}}_{m}$, the computation starting from $x\in R_{m}\setminus\mathfrak{\tilde
{m}}_{m}$ and leading to Equation \ref{lc1bv} proves the inclusion
$R_{m}^{\times}\subseteq\left\langle \zeta_{q-1}\right\rangle \times
(1+\mathfrak{\tilde{m}}_{m})$ of Equation \ref{lc1aa}; and Equation
\ref{lc1bb} had already shown the reverse inclusion. Next, let us show that
$\operatorname*{Frac}R_{m}=F$. This is easy: Since $F=\operatorname*{Frac}%
\mathcal{O}_{F}$ is a discrete valuation field, for any $x\in F$ we have
$\pi^{l}x\in\mathcal{O}_{F}$ for $l\geq0$ sufficiently big. In particular,
$\pi^{l+m}x\in R_{m}$ and $\pi^{l+m}\in R_{m}$, so $x=\frac{\pi^{l+m}x}%
{\pi^{l+m}}\in\operatorname*{Frac}R_{m}$. Finally, by Equation \ref{lc1} we
obtain%
\[
R_{m}=\mathcal{O}_{0}+\pi^{m}\mathcal{O}_{0}\left\langle 1,\pi,\ldots
,\pi^{e-1}\right\rangle \text{,}%
\]
so $R_{m}$ is finitely generated as an $\mathcal{O}_{0}$-module. As
$\mathcal{O}_{0}$ is a discrete valuation ring, the structure theorem for
finitely generated modules over a principal ideal domain shows that $R_{m}$ is
a finite free $\mathcal{O}_{0}$-module. There cannot be any torsion since it
is a submodule of $F$, hence $p$-torsionfree. This also implies that $R_{m}$
is Noetherian (which could also be shown in many other ways). If $1\leq m\leq
e$, then $p\in\pi^{e}\mathcal{O}_{F}^{\times}\subseteq\pi^{m}\mathcal{O}_{F}$,
so $\mathfrak{\tilde{m}}_{m}=\mathfrak{m}^{m}\mathcal{O}_{F}$. As $R_{m}$ can
be sandwiched as $\mathfrak{m}^{m}\mathcal{O}_{F}\subseteq R_{m}%
\subseteq\mathcal{O}_{F}$ but the outer two terms have finite index in one
another, we also must have $[\mathcal{O}_{F}:R_{m}]<\infty$. This also proves
$\operatorname*{Frac}R_{m}=F$ a second time.
\end{proof}

\begin{lemma}
The residue field of $R_{m}$ is the same as of $\mathcal{O}_{F}$, i.e.
$\mathcal{O}_{F}/\mathfrak{m}$. For $m\geq1$ we have $R_{m}=\mathcal{O}%
_{0}+\mathfrak{\tilde{m}}_{m}$.
\end{lemma}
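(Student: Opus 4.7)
The plan is to handle the second assertion first and then reduce the residue-field claim to a clean computation about the natural map $R_m/\mathfrak{\tilde{m}}_m \to \mathcal{O}_F/\mathfrak{m}$ induced by inclusion.

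For the equality $R_m = \mathcal{O}_0 + \mathfrak{\tilde{m}}_m$ when $m \geq 1$, I would simply unpack definitions: $\mathcal{O}_0 + \mathfrak{\tilde{m}}_m = \mathcal{O}_0 + p\mathcal{O}_0 + \mathfrak{m}^m\mathcal{O}_F = \mathcal{O}_0 + \mathfrak{m}^m\mathcal{O}_F = R_m$, since $p\mathcal{O}_0 \subseteq \mathcal{O}_0$. This is essentially cosmetic and requires no further work.

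For the residue-field identification, I would first observe that $\mathfrak{\tilde{m}}_m \subseteq \mathfrak{m}\mathcal{O}_F$, so the inclusion $R_m \hookrightarrow \mathcal{O}_F$ descends to a ring homomorphism $\varphi\colon R_m/\mathfrak{\tilde{m}}_m \to \mathcal{O}_F/\mathfrak{m}$. Surjectivity of $\varphi$ is free: since $F_0/\mathbb{Q}_p$ is the maximal unramified subextension, its residue field agrees with that of $F$, so $\mathcal{O}_0 \to \mathcal{O}_F/\mathfrak{m}$ is already surjective, and $\mathcal{O}_0 \subseteq R_m$. For injectivity, the case $m=0$ is vacuous; for $m \geq 1$ the content is the identity $R_m \cap \mathfrak{m}\mathcal{O}_F = \mathfrak{\tilde{m}}_m$. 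Given $x \in R_m \cap \mathfrak{m}\mathcal{O}_F$, write $x = a + y$ with $a \in \mathcal{O}_0$ and $y \in \mathfrak{m}^m\mathcal{O}_F \subseteq \mathfrak{m}\mathcal{O}_F$; then $a = x - y \in \mathcal{O}_0 \cap \mathfrak{m}\mathcal{O}_F = p\mathcal{O}_0$, whence $x \in p\mathcal{O}_0 + \mathfrak{m}^m\mathcal{O}_F = \mathfrak{\tilde{m}}_m$. This last step reuses exactly the equality $\mathcal{O}_0 \cap \mathfrak{m}\mathcal{O}_F = p\mathcal{O}_0$ already invoked in the previous lemma.

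Neither direction is truly an obstacle; if there is a subtlety, it is making sure one really does have $\mathcal{O}_0 \cap \mathfrak{m}\mathcal{O}_F = p\mathcal{O}_0$. This follows because $\mathcal{O}_0$ is a DVR with uniformizer $p$ (as $F_0/\mathbb{Q}_p$ is unramified) and $p = \pi^e u$ in $\mathcal{O}_F$ for a unit $u$, so any element of $\mathcal{O}_0$ lying in $\mathfrak{m}\mathcal{O}_F$ has positive $\pi$-adic valuation and hence positive $p$-adic valuation in $\mathcal{O}_0$. With this in hand, $\varphi$ is a bijection and we conclude $R_m/\mathfrak{\tilde{m}}_m \cong \mathcal{O}_F/\mathfrak{m}$.
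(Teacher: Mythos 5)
Your argument is correct. The paper takes a slightly different route: it uses the second isomorphism theorem $\left(A+B\right)/B\cong A/\left(A\cap B\right)$ with $A=\mathcal{O}_{0}$, $B=\mathfrak{\tilde{m}}_{m}$, getting $R_{m}/\mathfrak{\tilde{m}}_{m}\cong\mathcal{O}_{0}/(\mathcal{O}_{0}\cap\mathfrak{\tilde{m}}_{m})=\mathcal{O}_{0}/p\mathcal{O}_{0}$, and then identifies this with $\mathcal{O}_{F}/\mathfrak{m}$ because $F/F_{0}$ is totally ramified. You instead exhibit the canonical ring map $\varphi\colon R_{m}/\mathfrak{\tilde{m}}_{m}\rightarrow\mathcal{O}_{F}/\mathfrak{m}$ and check surjectivity and injectivity directly, where injectivity reduces to $R_{m}\cap\mathfrak{m}\mathcal{O}_{F}=\mathfrak{\tilde{m}}_{m}$, a computation the paper had already carried out inside the proof of the preceding lemma (Lemma \ref{lemma_Rm_main}). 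Both hinge on $\mathcal{O}_{0}\cap\mathfrak{m}\mathcal{O}_{F}=p\mathcal{O}_{0}$, which you also justify explicitly via total ramification, whereas the paper treats it as given. The net difference is stylistic: your version makes the isomorphism canonical (induced by the inclusion $R_{m}\hookrightarrow\mathcal{O}_{F}$), which is arguably cleaner since the statement is about the residue field being literally \textquotedblleft the same,\textquotedblright\ while the paper's version is marginally shorter by outsourcing the iso to a general group-theoretic fact. Neither buys anything substantive over the other.
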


\begin{proof}
Suppose $m\geq1$. Then $R_{m}=\mathcal{O}_{0}+\mathfrak{\tilde{m}}_{m}$ by
Equation \ref{lc1c}. For subgroups $A,B$ contained in some abelian group one
generally has%
\[
\frac{A+B}{B}\cong\frac{A}{A\cap B}\text{.}%
\]
Hence, for $A:=\mathcal{O}_{0}$ and $B:=\mathfrak{\tilde{m}}_{m}$, both lying
inside $\mathcal{O}_{F}$, we find%
\[
\frac{R_{m}}{\mathfrak{\tilde{m}}_{m}}=\frac{\mathcal{O}_{0}+\mathfrak{\tilde
{m}}_{m}}{\mathfrak{\tilde{m}}_{m}}\cong\frac{\mathcal{O}_{0}}{\mathcal{O}%
_{0}\cap\mathfrak{\tilde{m}}_{m}}=\frac{\mathcal{O}_{0}}{\left(
\mathcal{O}_{0}\cap p\mathcal{O}_{0}\right)  +(\mathcal{O}_{0}\cap\pi
^{m}\mathcal{O}_{F})}\text{.}%
\]
Since $\pi\mathcal{O}_{F}\cap\mathcal{O}_{0}=p\mathcal{O}_{0}$, we have
$\mathcal{O}_{0}\cap\pi^{m}\mathcal{O}_{F}=p^{m}\mathcal{O}_{F}$, so%
\[
=\frac{\mathcal{O}_{0}}{p\mathcal{O}_{0}+p^{m}\mathcal{O}_{0}}=\frac
{\mathcal{O}_{0}}{p\mathcal{O}_{0}}\text{,}%
\]
which is the residue field of $\mathcal{O}_{0}$, but since $F/F_{0}$ is
totally ramified, this is the same residue field as of $F$. Finally, for $m=0$
the first claim is clearly also true.
\end{proof}

\begin{lemma}
\label{lemma_PrincipalUnitsZpModuleStructure}The group $1+\mathfrak{\tilde{m}%
}_{m}\subseteq R_{m}^{\times}$ is a $\mathbb{Z}_{p}$-module under
exponentiation. Concretely, one may define%
\[
(1+x)^{\alpha}:=\sum_{l\geq0}\dbinom{\alpha}{l}x^{l}%
\]
for all $\alpha\in\mathbb{Z}_{p}$, where $\dbinom{\alpha}{l}=\frac
{\alpha(\alpha-1)\cdots(\alpha-l+1)}{1\cdot2\cdots l}$.
\end{lemma}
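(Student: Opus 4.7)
The plan is to reduce this to the standard fact that the principal units $U_F^1 = 1 + \mathfrak{m}$ carry a $\mathbb{Z}_p$-module structure via the binomial series, and verify that $1 + \tilde{\mathfrak{m}}_m$ is closed under this action. Since $\tilde{\mathfrak{m}}_m \subseteq \mathfrak{m}\mathcal{O}_F$ by the formula for $\tilde{\mathfrak{m}}_m$, the inclusion $1 + \tilde{\mathfrak{m}}_m \hookrightarrow U_F^1$ realizes the candidate group as a subgroup of an established $\mathbb{Z}_p$-module. All that remains is to show that for $\alpha \in \mathbb{Z}_p$ and $x \in \tilde{\mathfrak{m}}_m$ the binomial series converges \emph{inside} $R_m$ and that its value lies in $1 + \tilde{\mathfrak{m}}_m$.

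First I would observe that each term $\binom{\alpha}{l}x^l$ with $l \geq 1$ lies in $\tilde{\mathfrak{m}}_m$: the coefficient $\binom{\alpha}{l}\in\mathbb{Z}_p\subseteq\mathcal{O}_0\subseteq R_m$ and $x^l$ lies in the ideal $\tilde{\mathfrak{m}}_m \subseteq R_m$, so the product lies in the ideal $\tilde{\mathfrak{m}}_m$. Consequently every partial sum $1 + \sum_{l=1}^N \binom{\alpha}{l}x^l$ lies in $1 + \tilde{\mathfrak{m}}_m \subseteq R_m$. Convergence in the valuation topology on $\mathcal{O}_F$ is immediate because $|\binom{\alpha}{l}|_p \leq 1$ and $v(x^l) = l\cdot v(x) \to \infty$. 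Since Lemma \ref{lemma_Rm_main} establishes that $R_m$ is compact, it is complete in the induced topology, so the limit lies in $R_m$. Moreover $\tilde{\mathfrak{m}}_m$ itself is closed in $\mathcal{O}_F$ (it is a finitely generated $\mathcal{O}_0$-submodule of the compact module $\mathcal{O}_F$), so the tail $\sum_{l \geq 1}\binom{\alpha}{l}x^l$ lies in $\tilde{\mathfrak{m}}_m$, and hence $(1+x)^\alpha \in 1 + \tilde{\mathfrak{m}}_m$ as required.

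For the module axioms, I would first check that when $\alpha \in \mathbb{Z}_{\geq 0}$ the defining series reduces to the ordinary binomial expansion, and when $\alpha = -1$ it reduces to the geometric series inverting $(1+x)$; hence the formula recovers the group-theoretic exponentiation on the subgroup $\mathbb{Z} \subseteq \mathbb{Z}_p$. The identities $(1+x)^{\alpha+\beta} = (1+x)^\alpha \cdot (1+x)^\beta$ and $((1+x)^\alpha)^\beta = (1+x)^{\alpha\beta}$ can then be deduced either formally, by invoking the Vandermonde identity $\binom{\alpha+\beta}{n} = \sum_{k}\binom{\alpha}{k}\binom{\beta}{n-k}$ term by term, or by continuity: both sides are continuous in $(\alpha,\beta) \in \mathbb{Z}_p^2$ and agree on the dense subset $\mathbb{Z}^2$.

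The main obstacle is conceptual rather than technical: one must confirm that passing to the subring $R_m$ does not destroy any of the analytic structure that makes the binomial exponential work on $\mathcal{O}_F$. The crucial ingredient is the completeness of $R_m$, which is exactly what the compactness statement in Lemma \ref{lemma_Rm_main} provides; once this is in hand, every step is a routine transplant of the classical computation on $U_F^1$.
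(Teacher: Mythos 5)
Your proposal is correct and follows essentially the same route as the paper's proof: reduce to the known $\mathbb{Z}_p$-module structure on $U_F^1$, observe that each term $\binom{\alpha}{l}x^l$ with $l\geq 1$ lies in the ideal $\tilde{\mathfrak{m}}_m$ because $\binom{\alpha}{l}\in\mathbb{Z}_p\subseteq R_m$, and conclude by closedness of $\tilde{\mathfrak{m}}_m$ that the limit lands in $1+\tilde{\mathfrak{m}}_m$. The only difference is that you spell out the verification of the module axioms (Vandermonde/density), which the paper absorbs into the phrase ``well-known for $U_F^1$,'' and you supply a slightly more explicit justification (finitely generated $\mathcal{O}_0$-submodule, hence compact) for why $\tilde{\mathfrak{m}}_m$ is closed.
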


\begin{proof}
This result is well-known for $U_{F}^{1}$, so it is clear that the series
converges to an element in $U_{F}^{1}$. As soon as we show that the limit lies
in $1+\mathfrak{\tilde{m}}_{m}$, it is then clear that this pairing
$\mathbb{Z}_{p}\times(1+\mathfrak{\tilde{m}}_{m})\rightarrow U_{F}^{1}$
restricts to a module structure on $1+\mathfrak{\tilde{m}}_{m}$. To see this,
first note that $\dbinom{\alpha}{l}\in\mathbb{Z}_{p}$ (because for all
$\alpha\in\mathbb{Z}$ the value lies in $\mathbb{Z}$, so by continuity for
$\alpha\in\mathbb{Z}_{p}$ the values must lie in the closure $\overline
{\mathbb{Z}}=\mathbb{Z}_{p}$ with respect to the $p$-adic topology), so once
$l\geq1$, we have $\dbinom{\alpha}{l}x^{l}\in\mathfrak{\tilde{m}}_{m}$ since
$x\in\mathfrak{\tilde{m}}_{m}$. As $\mathfrak{\tilde{m}}_{m}$ is a closed
subspace of $R_{m}$, we deduce that the limit lies in $1+\mathfrak{\tilde{m}%
}_{m}$. This concludes the proof. An alternative approach could be modelled on
\cite[Chapter I, (6.1)]{MR1915966}.
\end{proof}

\begin{lemma}
\label{lemma_RmIsClopenInF}$R_{m}$ is clopen in $F$ and the quotient group
$F/R_{m}$ is discrete.
\end{lemma}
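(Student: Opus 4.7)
The plan is to reduce both claims to the single observation that $R_{m}$ contains an open neighborhood of $0\in F$ in the valuation topology, after which both conclusions follow from general topological-group nonsense.

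First I would recall that, by the very definition of the valuation topology on $F$, the fractional ideals $\mathfrak{m}^{n}\mathcal{O}_{F}$ for $n\in\mathbb{Z}$ form a neighborhood basis of zero. In particular $\mathfrak{m}^{m}\mathcal{O}_{F}$ is open in $F$. Since by the definition in Equation \ref{lcqx1} we have the trivial inclusion $\mathfrak{m}^{m}\mathcal{O}_{F}\subseteq\mathcal{O}_{0}+\mathfrak{m}^{m}\mathcal{O}_{F}=R_{m}$, the additive subgroup $R_{m}\subseteq F$ contains an open neighborhood of $0$, and is therefore itself open.

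From here everything is formal. Any open subgroup $H$ of a topological group $G$ is automatically closed, since its complement $\bigcup_{g\notin H}gH$ is a union of cosets, each of which is a homeomorphic translate of the open set $H$. Applied to $H:=R_{m}\subseteq(F,+)$, this yields that $R_{m}$ is clopen. Discreteness of $F/R_{m}$ is equally formal: the identity coset in $F/R_{m}$ is open in the quotient topology, because its preimage under the projection $F\twoheadrightarrow F/R_{m}$ is the open set $R_{m}$; by translation every singleton of $F/R_{m}$ is then open.

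I do not expect any real obstacle here. The only ingredient that uses anything specific to our construction is the inclusion $\mathfrak{m}^{m}\mathcal{O}_{F}\subseteq R_{m}$, which is essentially tautological from the definition of $R_{m}$; the rest is topological-group boilerplate.
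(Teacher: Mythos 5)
Your proof is correct and is exactly the standard argument the author had in mind when writing ``Clear'' in place of a proof: $R_m$ contains the open additive subgroup $\mathfrak{m}^m\mathcal{O}_F$, hence is itself open, hence clopen, and the quotient by an open subgroup is discrete. There is nothing to add.
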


Clear.

\begin{lemma}
\label{lemma_IsIntegralClosure}The ring $\mathcal{O}_{F}$ is the integral
closure of $R_{m}$ in $F$.
\end{lemma}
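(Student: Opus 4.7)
The plan is to verify the two containments corresponding to the definition of the integral closure: every element of $\mathcal{O}_F$ is integral over $R_m$, and conversely every element of $F$ integral over $R_m$ already lies in $\mathcal{O}_F$.

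For the first direction, I would invoke Lemma \ref{lemma_Rm_main}, which tells us that $R_m$ is a module-finite $\mathcal{O}_0$-algebra and that $\mathcal{O}_0 \subseteq R_m \subseteq \mathcal{O}_F$. Since $\mathcal{O}_F$ itself is module-finite over $\mathcal{O}_0$ (by Equation \ref{lc1}), it is in particular a finitely generated $R_m$-module, so every element of $\mathcal{O}_F$ satisfies a monic polynomial with coefficients in $R_m$, i.e.\ is integral over $R_m$. Alternatively, one may note that $[\mathcal{O}_F : R_m] < \infty$ as abelian groups already forces the ring extension $R_m \subseteq \mathcal{O}_F$ to be integral.

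For the second direction, I would appeal to the fact that $\mathcal{O}_F$ is a discrete valuation ring, hence integrally closed in its fraction field $F$. If $x \in F$ satisfies a monic polynomial with coefficients in $R_m \subseteq \mathcal{O}_F$, then $x$ is a fortiori integral over $\mathcal{O}_F$, so $x \in \mathcal{O}_F$.

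There is no real obstacle: the statement is essentially formal once one has the containments $\mathcal{O}_0 \subseteq R_m \subseteq \mathcal{O}_F$ and the module-finiteness from Lemma \ref{lemma_Rm_main}, combined with the well-known fact that DVRs are integrally closed. The only thing one must be careful about is to note that $\operatorname{Frac} R_m = F$ (also established in Lemma \ref{lemma_Rm_main}), so that the phrase ``integral closure of $R_m$ in $F$'' is the correct universal object to consider.
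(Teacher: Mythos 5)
Your proof is correct and is essentially the same argument as the paper's, just unpacked. The paper cites that $\mathcal{O}_F$ is the integral closure of $\mathcal{O}_0$ in $F$ and then invokes ``transitivity of integral closures'' to conclude the same for the intermediate ring $R_m$; your two containments (module-finiteness over $\mathcal{O}_0\subseteq R_m$ for integrality, and integral closedness of the DVR $\mathcal{O}_F$ for the reverse) are exactly what that one-line transitivity statement compresses.
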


\begin{proof}
We use that $\mathcal{O}_{F}$ is the integral closure of $\mathcal{O}_{0}$ in
$F$ by \cite[(5.6), (i)]{MR1972204}. By the transitivity of integral closures,
it follows that the intermediate ring $\mathcal{O}_{0}\subseteq R_{m}%
\subseteq\mathcal{O}_{F}$ must have the same integral closure.
\end{proof}

Having shown that $R_{m}$ is a local ring, we may now apply the following
general result.

\begin{proposition}
[Dennis--Stein]\label{prop_DennisStein}Suppose $R$ is a local ring.

\begin{enumerate}
\item Then $K_{1}(R)\cong R^{\times}$ (via the determinant) and the product
map of $K$-theory,%
\[
R^{\times}\otimes R^{\times}\longrightarrow K_{2}(R)
\]
is surjective, i.e. every element in $K_{2}$ comes from symbols.

\item \emph{(Steinberg relation)} If $x,y\in R^{\times}$ such that $x+y=1$,
then $\{x,y\}=0$.
\end{enumerate}
\end{proposition}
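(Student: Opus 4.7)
The plan is to derive both statements from standard facts about stable elementary matrix reductions and from the Steinberg presentation of $K_2$.

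For $K_1(R)\cong R^\times$ in part (1), the argument is Whitehead's lemma specialised to local rings. Any matrix $A\in GL_n(R)$ can be reduced by elementary row and column operations to $\operatorname{diag}(\det A,1,\ldots,1)$: in each row of an invertible matrix at least one entry is a unit, since otherwise the row would vanish modulo the maximal ideal, contradicting invertibility of $A$ over the residue field. Hence the local property always permits pivoting, so $SL(R)=E(R)=[GL(R),GL(R)]$, and the determinant then induces the claimed isomorphism $K_1(R)=GL(R)/E(R)\cong R^\times$.

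For the surjectivity of the product map $R^\times\otimes R^\times\to K_2(R)$, I would work with the Steinberg presentation $K_2(R)=\ker(\operatorname{St}(R)\twoheadrightarrow E(R))$. Inside $\operatorname{St}(R)$ one has the standard elements $h_{ij}(u)$ for $u\in R^\times$ and the Steinberg symbols $\{u,v\}$ realised as explicit combinations of these. The aim is to show that every element of the kernel is a product of such symbols. The crux is a combinatorial rewriting: one shows that any word in the generators $x_{ij}(r)$ representing an element of $\ker(\operatorname{St}(R)\to E(R))$ can be massaged into a product of $h$-elements, and the presence of sufficiently many units in a local ring (recall $R\setminus\mathfrak{m}=R^\times$) makes the necessary splittings possible. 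This is the genuinely hard step and the main obstacle, and it is the content of Dennis and Stein's analysis; by contrast, van der Kallen's earlier argument worked only when the residue field was not too small.

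For the Steinberg relation in part (2), one computes directly in $\operatorname{St}(R)$. Starting from the identity $x+(1-x)=1$ with both summands assumed to be units, one expands $\{x,1-x\}$ as a product of $h_{ij}$'s and applies the Steinberg commutator relations $[x_{ij}(r),x_{jk}(s)]=x_{ik}(rs)$ together with the identity $h_{ij}(u)h_{ij}(v)\equiv h_{ij}(uv)\pmod{\text{symbols}}$. The equation $x+(1-x)=1$ then makes the whole expression telescope to $1$, yielding $\{x,1-x\}=0$. This is Milnor's original calculation and is routine once the formalism of $\operatorname{St}(R)$ has been set up in part (1).
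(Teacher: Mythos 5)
Your proposal is correct and takes essentially the same route as the paper: both reduce $K_1$ of a local ring to $R^\times$ via the elementary-matrix/pivoting argument, both defer the surjectivity of the symbol map to Dennis--Stein (you honestly flag this as the step you cannot reproduce, which is exactly what the paper does by citing it as a black box), and both treat the Steinberg relation as standard. You supply a bit more detail on the $GL_n$ reduction and on the Steinberg-group formalism than the paper's two-line proof, but the substance is identical.
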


\begin{proof}
Since $R$ is a local ring (Lemma \ref{lemma_Rm_main}), $R^{\times
}=\operatorname{GL}_{1}(R)\rightarrow K_{1}(R)$ is an isomorphism, and split
by the determinant $\operatorname{GL}(R)\rightarrow R^{\times}$. By a result
of Dennis--Stein \cite[Theorem 2.7]{MR0406998} for a local ring the map
$K_{1}(R)\otimes K_{1}(R)\rightarrow K_{2}(R)$ is surjective. The claim in (2)
is standard.
\end{proof}

\begin{lemma}
\label{lemmaK1}Every element in $K_{2}(F)$ can be written as $\beta
=\{\pi,u_{0}\}+\sum_{i=1}^{r}\{u_{i},v_{i}\}$ for some $r$ and $u_{i},v_{i}%
\in\mathcal{O}_{F}^{\times}$.
\end{lemma}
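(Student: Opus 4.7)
The plan is to reduce to the case of a single symbol and then use the decomposition $F^{\times} \cong \langle \pi^{\mathbb{Z}}\rangle \times \mathcal{O}_F^{\times}$ coming from Equation \ref{lc1a} (bundling the roots of unity and the principal units into $\mathcal{O}_F^\times$).

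First, since $F$ is a field, $K_2(F)$ is generated by Steinberg symbols $\{a,b\}$ with $a,b\in F^{\times}$ (this is Matsumoto's theorem; alternatively one could apply Proposition \ref{prop_DennisStein} to each localization of $\mathcal{O}_F$, but for a field the statement is classical). So every $\beta\in K_2(F)$ is a finite sum $\sum_j \{a_j,b_j\}$, and it suffices to rewrite each individual symbol in the desired form and then collect.

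Next, I fix a symbol $\{a,b\}$ and write $a = \pi^m u$, $b=\pi^n v$ with $u,v\in\mathcal{O}_F^{\times}$ using Equation \ref{lc1a}. By bilinearity,
\[
\{a,b\} = mn\,\{\pi,\pi\} + m\,\{\pi,v\} + n\,\{u,\pi\} + \{u,v\}.
\]
I then apply the standard identities in $K_2$ of a field, namely $\{\pi,-\pi\}=0$ (hence $\{\pi,\pi\}=\{\pi,-1\}$) and $\{u,\pi\} = -\{\pi,u\}$, to get
\[
\{a,b\} = \{\pi,\,(-1)^{mn} v^{m} u^{-n}\} + \{u,v\},
\]
where $(-1)^{mn}v^{m}u^{-n}\in\mathcal{O}_F^{\times}$.

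Finally, summing over $j$, the $\{\pi,\cdot\}$ contributions combine, via bilinearity in the second slot, into a single symbol $\{\pi,u_0\}$ with $u_0\in\mathcal{O}_F^{\times}$, while the remaining terms are already of the form $\{u_i,v_i\}$ with $u_i,v_i\in\mathcal{O}_F^{\times}$. This yields the claimed presentation. There is no real obstacle; the only mild subtlety is the treatment of $\{\pi,\pi\}$, handled by the identity $\{x,-x\}=0$, which ensures that the uniformizer never has to appear in the second slot.
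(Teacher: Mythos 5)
Your proof follows the same route as the paper's: reduce to a pure symbol via generation by Steinberg symbols, split $a=\pi^m u$, $b=\pi^n v$ with $u,v\in\mathcal{O}_F^\times$, expand by bilinearity, use $\{\pi,\pi\}=\{\pi,-1\}$ from the Steinberg relation, and collect the $\{\pi,\cdot\}$ terms into a single symbol by bilinearity in the second slot. The argument is correct; you are slightly more explicit than the paper about the final collection step, but there is no substantive difference.
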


\begin{proof}
Standard. By linearity it suffices to prove the claim for a pure symbol
$\{x,y\}$ with $x,y\in F^{\times}$. We may write $x=\pi^{a}u$ and $y=\pi^{b}v$
for $a,b\in\mathbb{Z}$ and $u,v\in\mathcal{O}_{F}^{\times}$, so by linearity%
\begin{align*}
\{x,y\}  &  =\{\pi^{a},\pi^{b}\}+\{\pi^{a},v\}+\{u,\pi^{b}\}+\{u,v\}\\
&  =ab\{\pi,\pi\}+\{\pi,v^{a}\}+\{\pi,u^{-b}\}+\{u,v\}\text{.}%
\end{align*}
The Steinberg relation implies that $\{\pi,\pi\}=\{\pi,-1\}$, so this
simplifies to $\{\pi,(-1)^{ab}v^{a}u^{-b}\}+\{u,v\}$, proving the claim.
\end{proof}

The following is well-known to experts, and in much broader generality. We
record it in the version suitable for us.

\begin{lemma}
\label{lemmaK2}Let $\pi$ be a uniformizer of $F$. Then for any $u\in U_{F}%
^{2}$ there exist $a,b\in U_{F}^{1}$ such that $\{\pi,u\}=\{a,b\}$ holds in
$K_{2}(F)$.
\end{lemma}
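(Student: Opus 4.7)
The plan is to apply the Steinberg relation once to rewrite $\{\pi,u\}$ as a symbol with $u$ in the first slot, then use the Teichm\"uller decomposition together with the $\mathbb{Z}_{p}$-module structure of $U_{F}^{2}$ (available because $p$ is odd) to migrate the second slot into $U_{F}^{1}$. Since $u\in U_{F}^{2}$ we have $u-1\in\mathfrak{m}^{2}$, so $u=1+\pi x$ with $x:=(u-1)/\pi\in\mathfrak{m}$. The identity $1-(-\pi x)=u$ lets me invoke Proposition~\ref{prop_DennisStein}(2) to obtain $\{-\pi x,u\}=0$, and expanding bilinearly yields the pivotal identity
\[
\{\pi,u\}=\{u,-x\}.
\]

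Now apply Equation~\ref{lc1a} to $-x$: write $-x=\pi^{k}\omega w$ with $k:=v(x)\geq 1$, $\omega\in\mu_{q-1}(F)$ a Teichm\"uller lift, and $w\in U_{F}^{1}$. Bilinear expansion together with $\{u,\pi\}=-\{\pi,u\}$ gives
\[
(k+1)\{\pi,u\}=\{u,\omega\}+\{u,w\}.
\]
Because $p$ is odd, $q-1$ is a unit in $\mathbb{Z}_{p}$ and hence $v\mapsto v^{q-1}$ is a bijection on the $\mathbb{Z}_{p}$-module $U_{F}^{2}$ (Lemma~\ref{lemma_PrincipalUnitsZpModuleStructure}). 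Picking $v\in U_{F}^{2}$ with $v^{q-1}=u$ kills the root-of-unity contribution, since $\{u,\omega\}=(q-1)\{v,\omega\}=\{v,\omega^{q-1}\}=0$. Thus
\[
(k+1)\{\pi,u\}=\{u,w\},
\]
and both arguments of the right-hand symbol now live in $U_{F}^{1}$.

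The main obstacle is the remaining factor $k+1$: I still need a single symbol, not a $(k+1)$-th multiple. My plan is to apply the same ``Steinberg + Teichm\"uller + $\mathbb{Z}_{p}$-module'' recipe to the unique $\tilde v\in U_{F}^{2}$ satisfying $\tilde v^{k+1}=u$, which exists whenever $\gcd(k+1,p)=1$; the sporadic case $k+1=p$ can be side-stepped by first premultiplying $u$ by a sufficiently deep element of $U_{F}^{n}$ (whose symbol contribution is handled separately on a later iteration). Writing $\{u,w\}=(k+1)\{\tilde v,w\}$ turns the displayed identity into $(k+1)\bigl(\{\pi,u\}-\{\tilde v,w\}\bigr)=0$, so the difference is $(k+1)$-torsion in $K_{2}(F)$. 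Moore's structure theorem confines all torsion of $K_{2}(F)$ to $\mu(F)$, and iterating the construction on $\tilde v$ (using $p$-adic convergence inside $U_{F}^{1}$ to control the tail) pins the residual ambiguity to zero, yielding the desired equality $\{\pi,u\}=\{\tilde v,w\}$ with $\tilde v,w\in U_{F}^{1}$.
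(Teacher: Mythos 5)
Your opening step is correct: with $u=1+\pi x$ and $x=(u-1)/\pi\in\mathfrak{m}$, the Steinberg relation $\{-\pi x,u\}=0$ and bilinearity do yield $\{\pi,u\}=\{u,-x\}$, and the Teichm\"uller/$\mathbb{Z}_p$-module trick correctly kills $\{u,\omega\}$, leaving $(k+1)\{\pi,u\}=\{u,w\}$ with $u,w\in U_F^1$. But this only expresses $\{\pi,u^{k+1}\}$ (not $\{\pi,u\}$) as a single symbol of principal units, since $k=v(x)\geq 1$; the statement you need is for $\{\pi,u\}$ itself. Your attempt to strip off the $(k+1)$ factor has several genuine gaps. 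First, the $(k+1)$-th root $\tilde v$ only exists in $U_F^2$ when $\gcd(k+1,p)=1$, and the proposed side-step via premultiplying by a deep unit is not actually carried out. Second, even granting $\tilde v$, you only get $(k+1)(\{\pi,u\}-\{\tilde v,w\})=0$, i.e.\ the difference is torsion. To conclude it is zero you invoke the assertion that torsion in $K_2(F)$ lies in $\mu(F)$ --- this is the \emph{uniquely divisible} form of Moore's theorem, which the paper explicitly declines to use because it is far harder than the present lemma (Theorem~\ref{thmMoore} only supplies that the kernel is divisible). Third, the claimed ``iteration with $p$-adic convergence inside $U_F^1$'' is not a valid argument: $K_2(F)$ carries no topology in which the successive torsion errors telescope or converge, so the residual ambiguity is never pinned down. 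Finally, your argument quietly assumes $p$ odd (to make $q-1$ a $\mathbb{Z}_p$-unit and to take $p$-adic roots), whereas the lemma as stated makes no such restriction.

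The paper's proof avoids all of this with one algebraic identity. Writing $u=1-z$ with $v(z)\geq 2$ and setting $g:=1+z\pi^{-1}-z\in U_F^1$, one has $(1-\pi)(1-z)=1-\pi g$, hence $\{\pi,u\}=\{\pi,\tfrac{1-\pi g}{1-\pi}\}=\{\pi,1-\pi g\}$ by Steinberg, and then $\{\pi,1-\pi g\}=-\{g,1-\pi g\}=\{g^{-1},1-\pi g\}$ by a second Steinberg application, which is already a single symbol with both entries in $U_F^1$. No torsion ambiguity, no appeal to Moore, no parity hypothesis on $p$. If you want to rescue your route, the crucial missing move is to avoid producing a multiplicative factor at all: you would need a change of variables that absorbs the $\pi^k$ part of $-x$ into the argument rather than spitting it back out as $k\{u,\pi\}$, which is essentially what the substitution $g$ achieves.
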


\begin{proof}
Since $u\in U_{F}^{2}$, we may write $u=1-z$ with $z\in\mathcal{O}_{F}$ of
valuation $v(z)\geq2$. Define $g:=1+z\pi^{-1}-z$. Note that since $v(z)\geq2$,
the term $z\pi^{-1}$, and thus $z\pi^{-1}-z$, lies in $\mathfrak{m}$. Hence,
$g\in U_{F}^{1}$. We compute%
\[
(1-\pi)\cdot(1-z)=1-\pi-z+\pi z=1-\pi(1+z\pi^{-1}-z)=1-\pi g\text{,}%
\]
i.e. $1-z=\frac{1-\pi g}{1-\pi}$. Now we compute%
\[
\{\pi,u\}=\{\pi,1-z\}=\left\{  \pi,\frac{1-\pi g}{1-\pi}\right\}  =\{\pi,1-\pi
g\}
\]
by the Steinberg relation $\{\pi,1-\pi\}=0$. Using the Steinberg relation a
second time, we obtain%
\[
=\{\pi g,1-\pi g\}-\{g,1-\pi g\}=-\{g,1-\pi g\}\text{,}%
\]
but $g\in U_{F}^{1}$ and $1-\pi g\in U_{F}^{1}$, so take $a:=g^{-1}$ and
$b:=1-\pi g$.
\end{proof}

We need the following crucial observation of Hasse on the higher unit
filtration and $p$-power maps.\ We let%
\[
e_{1}:=\frac{e}{p-1}\text{,}%
\]
where $e:=e_{F/\mathbb{Q}_{p}}$ is the absolute ramification index.

\begin{lemma}
[Hasse]\label{lemma_Hasse}The filtration of the higher unit groups is
compatible with the $p$-power filtration in the following sense:

\begin{enumerate}
\item Suppose $i\geq1$. Then for all sufficiently large $k$ we have%
\[
(U_{F}^{1})^{p^{k}}\subseteq U_{F}^{i}\text{.}%
\]

\item Suppose $i\geq1$. Then whenever $k\geq1$ and $i>pe_{1}+(k-1)e$ one has
the inclusion%
\[
U_{F}^{i}\subseteq(U_{F}^{1})^{p^{k}}\text{.}%
\]

\end{enumerate}
\end{lemma}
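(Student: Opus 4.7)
The engine of both parts is the binomial expansion
\[
(1+x)^{p} = 1 + px + \sum_{i=2}^{p-1}\binom{p}{i}x^{i} + x^{p},
\]
together with the fact that $v(p)=e$, so that $v\bigl(\binom{p}{i}x^{i}\bigr)\geq e + iv(x)$ for $1\leq i\leq p-1$. The first thing I will record is the resulting containment $(U_{F}^{j})^{p}\subseteq U_{F}^{\varphi(j)}$ where
\[
\varphi(j)=\min(pj,\,e+j).
\]
The threshold between the two regimes is exactly $j=e_{1}=e/(p-1)$: if $j\leq e_{1}$ then $\varphi(j)=pj$, while if $j\geq e_{1}$ then $\varphi(j)=e+j$, and moreover for $j>e_{1}$ the term $px$ strictly dominates all the others, so the induced map on the graded quotients
\[
\mathfrak{m}^{j}/\mathfrak{m}^{j+1}\longrightarrow \mathfrak{m}^{e+j}/\mathfrak{m}^{e+j+1},\qquad \overline{x}\longmapsto \overline{px},
\]
is an isomorphism of one-dimensional $\kappa$-vector spaces (multiplication by $p$ which has valuation exactly $e$).

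For part (1), I iterate $\varphi$ starting from $j=1$. Since $\varphi(j)>j$ unconditionally (as $p\geq 2$ and $e\geq 1$), a finite number of applications of the $p$-th power map pushes us past $e_{1}$; from then on each further application increases the valuation by $e$. Consequently for any fixed $i\geq 1$ there exists $k$ with $\varphi^{k}(1)\geq i$, giving $(U_{F}^{1})^{p^{k}}\subseteq U_{F}^{i}$.

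For part (2), the crux is the reverse inclusion $U_{F}^{e+j}\subseteq (U_{F}^{j})^{p}$ for every $j>e_{1}$. Given $u\in U_{F}^{e+j}$, I produce a $p$-th root by successive approximation: using the graded isomorphism described above, inductively choose $v_{n}\in U_{F}^{j+n-1}$ such that $v_{1}^{p}v_{2}^{p}\cdots v_{n}^{p}\equiv u\pmod{U_{F}^{e+j+n}}$. Because $R_{m}$ (and in particular $\mathcal{O}_{F}$) is complete in the valuation topology (Lemma \ref{lemma_Rm_main}), the infinite product $v=\prod v_{n}$ converges in $U_{F}^{j}$ and satisfies $v^{p}=u$. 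Then I iterate this surjection $k$ times:
\[
U_{F}^{i}\subseteq (U_{F}^{i-e})^{p}\subseteq (U_{F}^{i-2e})^{p^{2}}\subseteq\cdots\subseteq (U_{F}^{i-ke})^{p^{k}}\subseteq (U_{F}^{1})^{p^{k}},
\]
where each intermediate step uses the reverse inclusion with $j=i-\ell e$ and therefore requires $i-\ell e>e_{1}$. The worst (i.e., last) constraint is $i-ke>e_{1}$, which rearranges to $i>ke+e_{1}=pe_{1}+(k-1)e$ using the identity $pe_{1}=e+e_{1}$. The final containment in $(U_{F}^{1})^{p^{k}}$ uses that $i-ke>e_{1}\geq 0$ forces $i-ke\geq 1$.

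The main obstacle is the successive approximation argument for the surjectivity of the $p$-th power map when $j>e_{1}$: one must simultaneously track that each correction $v_{n}$ truly lies in $U_{F}^{j+n-1}$ (so the product converges in $U_{F}^{j}$, not merely in $U_{F}^{1}$) and that the residual error is killed to the next order. Everything else is either a direct binomial valuation estimate or a bookkeeping iteration of the containment $U_{F}^{e+j}\subseteq (U_{F}^{j})^{p}$.
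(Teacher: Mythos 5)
Your proposal is correct and follows essentially the same route as the paper: both rest on the binomial estimate giving $(U_{F}^{j})^{p}\subseteq U_{F}^{\min(pj,\,e+j)}$, the surjectivity of the graded $p$-power map for $j>e_{1}$, and a limiting argument using completeness to upgrade $U_{F}^{t+e}\subseteq(U_{F}^{t})^{p}\cdot U_{F}^{t+e+l}$ to $U_{F}^{t+e}\subseteq(U_{F}^{t})^{p}$. The only difference is presentational — the paper cites Neukirch for the binomial bounds and the graded surjectivity and closes the limiting step at the group level, whereas you rederive the bounds from scratch and construct a convergent $p$-th root by explicit successive approximation, which is the same argument carried out element-wise.
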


\begin{proof}
The crucial ingredient is a case-by-case study of the map $x\mapsto x^{p}$
which one can find explained in \cite[Chapter I, (5.7) Proposition]%
{MR1915966}. The cases are as follows: If $1\leq t\leq e_{1}$, then
$(U_{F}^{t})^{p}\subseteq U_{F}^{pt}$ and if $t>e_{1}$, then $(U_{F}^{t}%
)^{p}\subseteq U_{F}^{t+e}$. As $p\geq2$ and $e\geq1$, it is clear that after
taking $p$-th powers often enough, we land in the second case, and then can
reach arbitrarily high up in the filtration. This settles (1). We could be
more precise about how big $k$ has to get, but we have no use for this
information. For (2) one uses that once $t>e_{1}$ the $p$-power map%
\[
U_{F}^{t}/U_{F}^{t+1}\longrightarrow U_{F}^{t+e}/U_{F}^{t+e+1}\text{,}\qquad
x\mapsto x^{p}%
\]
is surjective, see \cite[Chapter I, (5.7) Proposition, part (3)]{MR1915966}
for the proof. Thus, $U_{F}^{t+e}=(U_{F}^{t})^{p}\cdot\underline{U_{F}%
^{t+e+1}}$. As the same holds for any greater $t$ as well, we may apply the
same computation to the underlined term, giving%
\[
U_{F}^{t+e}=(U_{F}^{t})^{p}\cdot\underline{(U_{F}^{t+1})^{p}U_{F}^{t+e+2}}%
\]
and since $(U_{F}^{t+1})^{p}\subseteq(U_{F}^{t})^{p}$, we get $U_{F}%
^{t+e}=(U_{F}^{t})^{p}U^{t+e+l}$ for any $l\geq1$. Letting $l\rightarrow
+\infty$ and using $\bigcap_{c\geq t}U_{F}^{c}=\{1\}$, we obtain the crucial
equality%
\[
U_{F}^{t+e}=(U_{F}^{t})^{p}\qquad\text{(once }t>e_{1}\text{).}%
\]
Hence, if $i-ke>e_{1}$, we get (by inductively running through the above
identity from right to left for varying $t$)%
\[
(U_{F}^{1})^{p^{k}}\supseteq(U_{F}^{i-ke})^{p^{k}}=(U_{F}^{i-ke+e})^{p^{k-1}%
}=\cdots=U_{F}^{i}\text{.}%
\]
However, $i-ke>e_{1}$ is equivalent to $i>pe_{1}+(k-1)e$. This settles (2).
\end{proof}

We write%
\[
h\colon K_{2}(F)\longrightarrow\mu(F)\text{.}%
\]
for the Hilbert symbol. We had recalled its definition in
\S \ref{sect_RecallHilbertSymbol}.

\begin{theorem}
[Moore]\label{thmMoore}The map $h$ is surjective and $\ker(h)$ is a divisible group.
\end{theorem}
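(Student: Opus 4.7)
The plan is to reduce both claims to Tate's norm-residue isomorphism for $K_2$: for each $n\geq 1$ with $\mu_n\subseteq F$, the Hilbert symbol induces an isomorphism $K_2(F)/n\xrightarrow{\sim} \mu_n$.

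Surjectivity is then immediate by specialising to $n=m=\#\mu(F)$: one obtains $K_2(F)/m\xrightarrow{\sim} \mu_m=\mu(F)$, whence $h$ is onto. For divisibility of $\ker h$ I would apply the snake lemma to multiplication by $n$ on the short exact sequence $0\to\ker h\to K_2(F)\xrightarrow{h}\mu(F)\to 0$, obtaining
\[
K_{2}(F)[n]\longrightarrow\mu(F)[n]\longrightarrow(\ker h)/n\longrightarrow K_{2}(F)/n\longrightarrow\mu(F)/n\longrightarrow 0.
\]
After replacing $n$ by $\gcd(n,m)$ (which leaves $\mu(F)[n]$ and $\mu(F)/n$ unchanged), Tate's theorem forces the rightmost map to be an isomorphism between cyclic groups of order $\gcd(n,m)$. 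Hence $(\ker h)/n$ is a quotient of $\mu(F)[n]$, and it remains to see that the leftmost map $K_2(F)[n]\to\mu(F)[n]$ is already surjective. For this I would use the symbols $\{\zeta,x\}$ with $\zeta\in\mu_n(F)$: they are automatically $n$-torsion in $K_2(F)$ since $n\{\zeta,x\}=\{\zeta^n,x\}=0$, and the non-degeneracy of the Hilbert pairing $F^{\times}/(F^{\times})^n\times F^{\times}/(F^{\times})^n\to\mu_n$ (standard output of local class field theory) produces an $x\in F^\times$ realising any prescribed target. Thus $(\ker h)/n=0$ for all $n$, i.e.\ $\ker h$ is divisible.

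The hard part is Tate's norm-residue isomorphism itself, especially for the $p$-primary part of $n$, together with the closely related surjectivity of $K_2(F)[n]\to\mu(F)[n]$. Both rest on substantial Galois cohomology and local class field theory and are logically inseparable from Moore's theorem itself. The natural strategy in the context of this paper is therefore to import Theorem~\ref{thmMoore} as a black box from the literature rather than reprove it: the preceding Lemmas~\ref{lemmaK1}, \ref{lemmaK2}, and~\ref{lemma_Hasse} are staged for the construction of the optimal order in the remainder of \S\ref{sect_LocalTheory}, not for a fresh proof of Moore.
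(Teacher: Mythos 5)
Your final recommendation is exactly what the paper does: Theorem~\ref{thmMoore} is not proved in the text but cited as \cite[Chapter~IX, (4.3) Theorem]{MR1915966}, with a remark that the kernel is even uniquely divisible but that this stronger fact is not needed. So the proposal and the paper converge on treating Moore's local result as a black box, and the surrounding lemmas (\ref{lemmaK1}, \ref{lemmaK2}, \ref{lemma_Hasse}) are indeed staged for the optimal-order construction rather than for reproving this theorem.

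Two small remarks on the sketch you give before reaching that conclusion. First, the reduction ``replace $n$ by $\gcd(n,m)$'' only handles the part of $n$ supported on primes dividing $m$. If $p\nmid m$, then $\mu(F)[p]=\mu(F)/p=0$ and the snake lemma gives $(\ker h)/p\cong K_2(F)/p$, so one must separately show $K_2(F)/p=0$ in that case; Tate's isomorphism as you state it (which requires $\mu_p\subseteq F$) does not apply, and one needs the Galois-cohomological computation $K_2(F)/p\cong H^2(F,\mu_p^{\otimes 2})=0$ when $\mu_p\not\subseteq F$, or an equivalent argument. Second, you correctly flag that the surjectivity of $K_2(F)[n]\to\mu(F)[n]$ via symbols $\{\zeta,x\}$ rests on the non-degeneracy of the Hilbert pairing, which is indeed of the same depth as the theorem itself. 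Both points reinforce rather than undermine your verdict that citation is the appropriate move here.
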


The proof is not terribly difficult. We refer to \cite[Chapter IX, (4.3)
Theorem]{MR1915966} for a textbook proof. In fact, it is also known that the
kernel is uniquely divisible, but this is much harder to show and we shall not
need it.

Next, using our toolbox from Algebraic $K$-theory as in
\S \ref{sect_appendix_Kthy} there is a localization sequence of spectra%
\begin{equation}
K_{\mathfrak{m}}(\mathcal{O}_{F})\longrightarrow K(\mathcal{O}_{F}%
)\longrightarrow K(F)\text{.} \label{lcim2a}%
\end{equation}
It induces a long exact sequence in $K$-theory groups and the boundary map%
\[
K_{2}(F)\overset{\partial}{\longrightarrow}K_{\mathfrak{m},1}(\mathcal{O}%
_{F})\overset{(\diamond)}{\cong}K_{1}(\kappa)\cong\kappa^{\times}%
\]
for $\kappa:=\mathcal{O}_{F}/\mathfrak{m}$ the residue field, is known as the
\emph{tame symbol}. An explicit formula for this map is known, see Equation
\ref{lee1}. The isomorphism $(\diamond)$ stems from devissage and is available
because $\mathcal{O}_{F}$ is regular. It has no counterpart for the rings
$R_{m}$ in general.

\begin{lemma}
\label{lemma_HilbertBecomesTameSymbol}For $n:=\#\mu(F)$ write $n=p^{k}n_{0}$
with $(n_{0},p)=1$. Then the diagram%
\begin{equation}%
\xymatrix{
K_2(F) \ar@{->>}[r]^-{h} \ar[dr]_-{\partial} & \mu(F) \ar@{->>}[d]^-{p^k} \\
& \kappa^{\times}
}
\label{lcx1}%
\end{equation}
is commutative.\ The downward arrow refers to taking the $p^{k}$-th power.
\end{lemma}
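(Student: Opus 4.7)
The plan is to exploit bilinearity to replace the question about $h(x,y)^{p^{k}}$ with a question about $h(x,y^{p^{k}})$, and then to observe that the cyclic Kummer extension governing the latter is tamely ramified, which puts us in a regime where the local Artin map admits an explicit description matching formula \ref{lee1}. Concretely, since $h$ is bilinear, $h(x,y)^{p^{k}}=h(x,y^{p^{k}})$, and we have $F(\sqrt[n]{y^{p^{k}}})=F(\sqrt[n_{0}]{y})$, an extension of degree dividing $n_{0}$. Because $\gcd(n_{0},p)=1$, this extension is at worst tamely ramified over $F$. Moreover, via Teichmüller we have the canonical identification $\mu(F)[\text{prime-to-}p]\cong\mu_{q-1}\cong\kappa^{\times}$, so $n_{0}=q-1$ and the target of the diagonal arrow is the natural home of $h(x,y)^{p^{k}}$.

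By Lemma \ref{lemmaK1}, every element of $K_{2}(F)$ is a sum of symbols of the form $\{\pi,u_{0}\}$ and $\{u,v\}$ with $u,v\in\mathcal{O}_{F}^{\times}$. Both maps in Diagram \ref{lcx1} are bilinear, so it suffices to verify commutativity on these two types of pure symbols. For $\{u,v\}$ with $u,v\in\mathcal{O}_{F}^{\times}$, the extension $F(\sqrt[n_{0}]{v})/F$ is unramified (the polynomial $X^{n_{0}}-v$ remains separable modulo $\mathfrak{m}$ since $v$ is a unit and $n_{0}$ is prime to $p$), so $\operatorname{Art}_{F}(u)$, which lies in the inertia subgroup because $u$ is a unit, acts trivially on $\sqrt[n_{0}]{v}$. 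Hence $h(u,v^{p^{k}})=1$, and this matches $\partial\{u,v\}=1$ on the tame side.

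The substantive case is $\{\pi,u\}$. Again $F(\sqrt[n_{0}]{u})/F$ is unramified, and $\operatorname{Art}_{F}(\pi)$ acts on its residue field as the $q$-power Frobenius. Choosing for $\sqrt[n_{0}]{u}$ a root whose Teichmüller-reduction is an $n_{0}$-th root of $\bar{u}$ in $\kappa^{\times}$, a short calculation shows that $\operatorname{Art}_{F}(\pi)\sqrt[n_{0}]{u}/\sqrt[n_{0}]{u}$ is the root of unity whose Teichmüller-reduction equals $\bar{u}^{(q-1)/n_{0}}=\bar{u}$. Under the identification $\mu(F)[\text{prime-to-}p]\cong\kappa^{\times}$, this reproduces (up to the standard sign) the value $\partial\{\pi,u\}$ predicted by formula \ref{lee1}.

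The main obstacle is the bookkeeping in the last paragraph: one must keep track of several compatible identifications simultaneously, namely the Teichmüller lift $\kappa^{\times}\hookrightarrow\mathcal{O}_{F}^{\times}$, the action of Frobenius on unramified extensions, and the convention for which argument of the Hilbert symbol the Artin map is applied to. The underlying input is standard local class field theory for tame extensions, which pins down the Artin map on tame Kummer extensions explicitly; once this is set up carefully, the matching with Equation \ref{lee1} is a routine comparison.
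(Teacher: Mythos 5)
Your proposal is correct in spirit but takes a genuinely different route from the paper. The paper treats this as a pure citation exercise: it introduces the $m$-Hilbert symbols $(x,y)_m$, and then cites two black-box compatibilities from Fesenko--Vostokov's textbook, namely $(x,y)_n^{p^k} = (x,y)_{q-1}$ (a general lowering-of-level identity) and $(x,y)_{q-1} = \partial\{x,y\}$ (the identification of the $(q-1)$-Hilbert symbol with the tame symbol). You re-derive both of these from scratch by reducing to the symbols of Lemma \ref{lemmaK1} and explicitly tracing the Artin map through unramified Kummer extensions. That is a legitimate and more self-contained path, and it is nice that your lowering-of-level step (pass from $\sqrt[n]{y^{p^k}}$ to $\sqrt[n_0]{y}$, using that $\mu_{p^k}\subseteq F$ so the two radicals differ by a root of unity fixed by $\operatorname{Art}_F(x)$) is exactly the content of the first citation.

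However, your "up to the standard sign" in the $\{\pi,u\}$ case is hiding something more substantive than a sign, and as written the computation actually disagrees with Equation \ref{lee1}. You assert that $\operatorname{Art}_F(\pi)$ acts on the residue field of the unramified extension as the $q$-power (arithmetic) Frobenius; then, since $\alpha:=\sqrt[n_0]{u}$ has $\bar\alpha^{n_0}=\bar u$ and $n_0=q-1$, the quotient $\phi(\bar\alpha)/\bar\alpha=\bar\alpha^{q-1}=\bar u$. But Equation \ref{lee1} gives $\partial\{\pi,u\}=(-1)^{1\cdot 0}\overline{\pi^{0}u^{-1}}=\bar u^{-1}$. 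These differ by inversion, not by a sign, and the discrepancy is resolved only by noting that the reference uses the \emph{geometric} Frobenius normalization of $\operatorname{Art}_F$: with $\operatorname{Art}_F(\pi)=\phi^{-1}$ one has $\phi(\alpha)=\alpha\,\bar u$, hence $\phi^{-1}(\alpha)/\alpha=\bar u^{-1}$, matching $\partial$. So the argument is correct once you pin down that convention, but the way you stated it would yield the wrong (inverse) answer for residue fields larger than $\mathbb{F}_3$, and "up to the standard sign" undersells the point.

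Two smaller remarks. First, it would be cleaner to note that the equality $F(\sqrt[n]{y^{p^k}})=F(\sqrt[n_0]{y})$ and the resulting identity $h(x,y^{p^k})=(x,y)_{n_0}$ really uses $\mu_{p^k}\subseteq F$ so that the two radicals differ by an element of $F$ fixed by $\operatorname{Art}_F(x)$; you implicitly use this but it is the crux. Second, since $n_0$ is the order of the prime-to-$p$ roots of unity in $F$ and these are Teichm\"uller lifts of $\kappa^\times$, one has $n_0=q-1$ automatically; your aside "$\bar u^{(q-1)/n_0}=\bar u$" is therefore just $\bar u^{1}$ and the exponent need not be carried along.
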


\begin{proof}
For any $m\mid n$ write $(x,y)_{m}:=\frac{\operatorname*{Art}_{F}%
(x)\sqrt[m]{y}}{\sqrt[m]{y}}$ for the $m$-Hilbert symbol. This is the variant
of the Hilbert symbol taking a root of possibly lesser order than would be
maximally permitted to obtain a Kummer extension. Then $n=p^{k}(q-1)$ with
$q:=\#\kappa$ and then $h(x,y)^{p^{k}}=(x,y)_{n}^{p^{k}}=(x,y)_{q-1}$ (by
\cite[Chapter IV, (5.1)\ Prop., (7)]{MR1915966}) and $(x,y)_{q-1}%
=\partial\{x,y\}$ (by \cite[Chapter IV, (5.3)\ Theorem, for $n$ taken to be
$q-1$]{MR1915966}).
\end{proof}

\begin{corollary}
\label{cor_HilbIntegralPart}The map $K_{2}(\mathcal{O}_{F})\rightarrow
K_{2}(F)$ is injective, so we may regard $K_{2}(\mathcal{O}_{F})$ as a
subgroup of $K_{2}(F)$. The restriction $h\mid_{K_{2}(\mathcal{O}_{F})}$ is a
surjective map%
\[
h\mid_{K_{2}(\mathcal{O}_{F})}\colon K_{2}(\mathcal{O}_{F})\longrightarrow
\mu(F)[p^{\infty}]
\]
and the kernel is a divisible group.
\end{corollary}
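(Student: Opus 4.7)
The plan is to deduce all three assertions formally from the localization sequence \eqref{lcim2a}, combined with the tame-factorization $\partial = p^{k}\circ h$ from Lemma \ref{lemma_HilbertBecomesTameSymbol} and Moore's Theorem \ref{thmMoore}.

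First I would extend the localization sequence one term to the left,
\[
K_{\mathfrak{m},2}(\mathcal{O}_{F})\longrightarrow K_{2}(\mathcal{O}_{F})\longrightarrow K_{2}(F)\overset{\partial}{\longrightarrow}K_{\mathfrak{m},1}(\mathcal{O}_{F}),
\]
identify $K_{\mathfrak{m},2}(\mathcal{O}_{F})\cong K_{2}(\kappa)$ by devissage (available since $\mathcal{O}_{F}$ is regular), and invoke Quillen's computation $K_{2}(\kappa)=0$ for the finite residue field $\kappa$. This forces the injectivity $K_{2}(\mathcal{O}_{F})\hookrightarrow K_{2}(F)$, which is the first claim, and lets us identify $K_{2}(\mathcal{O}_{F})$ with $\ker(\partial)\subseteq K_{2}(F)$ by exactness.

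Next I would read off the image of $h|_{K_{2}(\mathcal{O}_{F})}$. The diagram of Lemma \ref{lemma_HilbertBecomesTameSymbol} says $\partial = p^{k}\circ h$, so an element $x\in K_{2}(F)$ lies in $\ker(\partial)$ iff $h(x)^{p^{k}}=1$, i.e.\ iff $h(x)\in\mu(F)[p^{\infty}]$ (here I use that the $p$-primary part of $\mu(F)$ has order exactly $p^{k}$ by the very definition of $k$). This simultaneously shows $h(K_{2}(\mathcal{O}_{F}))\subseteq\mu(F)[p^{\infty}]$ and gives surjectivity onto this subgroup: by Moore's theorem every $\zeta\in\mu(F)[p^{\infty}]\subseteq\mu(F)$ has the form $\zeta=h(x)$ for some $x\in K_{2}(F)$, and the equivalence just stated forces such an $x$ to lie in $K_{2}(\mathcal{O}_{F})$.

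For the kernel, the factorization $\partial = p^{k}\circ h$ immediately yields $\ker(h)\subseteq\ker(\partial)=K_{2}(\mathcal{O}_{F})$, hence
\[
\ker\bigl(h|_{K_{2}(\mathcal{O}_{F})}\bigr)=K_{2}(\mathcal{O}_{F})\cap\ker(h)=\ker(h),
\]
and this group is divisible by Moore's Theorem \ref{thmMoore}. I do not anticipate a serious obstacle: the only external inputs are Quillen's vanishing for $K_{2}$ of finite fields and Moore's theorem, both already invoked in the section; every remaining step is a diagram chase in the localization long exact sequence.
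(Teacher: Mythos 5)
Your proof is correct and uses the same inputs as the paper (devissage in the localization sequence, the tame-vs-Hilbert factorization of Lemma \ref{lemma_HilbertBecomesTameSymbol}, and Moore's Theorem \ref{thmMoore}), but you organize them more efficiently. The pivotal observation you add is to read the commutative triangle of Diagram \ref{lcx1} as the set-theoretic identity $K_2(\mathcal{O}_F) = \ker(\partial) = h^{-1}(\mu(F)[p^\infty])$, where the last equality uses that the $p^k$-th power map $\mu(F)\to\kappa^\times$ has kernel exactly $\mu(F)[p^\infty]$. From this, both the surjectivity of $h|_{K_2(\mathcal{O}_F)}$ onto $\mu(F)[p^\infty]$ and, crucially, the inclusion $\ker(h)\subseteq K_2(\mathcal{O}_F)$ fall out at once, so the kernel of the restriction is literally $\ker(h)$ and Moore's theorem applies verbatim. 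The paper establishes divisibility of the kernel by a more indirect argument: given $\alpha\in K_2(\mathcal{O}_F)$ with $h(\alpha)=1$ and an integer $M$, it writes $\alpha=M(q-1)\beta'$ for some $\beta'\in K_2(F)$ using Moore's divisibility and then observes that $(q-1)\beta'$ must lie in $K_2(\mathcal{O}_F)$ because $\partial$ takes values in a group of order $q-1$. Your route is shorter and yields the slightly sharper observation that $\ker(h|_{K_2(\mathcal{O}_F)})$ coincides with $\ker(h)$ as subgroups of $K_2(F)$.
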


\begin{proof}
Using the localization sequence of Equation \ref{lcim2a}, we obtain%
\[%
\xymatrix{
\cdots\ar[r]^-{j} & K_2(\mathcal{O}_F) \ar[r]^-{i} \ar[dr] & K_2(F) \ar
[r]^-{\partial} \ar[d]^{h} & K_1(\kappa) \ar@{-->}[dl] \ar[r] & 0 \\
& & \mu(F),
}%
\]
where the dashed arrow is the Teichm\"{u}ller lift, i.e. the fact that the
multiplicative group of the residue field, $\mathbb{F}_{q}^{\times}$, can be
identified isomorphically with the prime-to-$p$ roots of unity $\left\langle
\zeta_{q-1}\right\rangle $ in\ Equation \ref{lc1a}. This is a section to the
downward arrow on the right in Diagram \ref{lcx1}. We see that the image of
the dashed arrow amounts precisely to the prime-to-$p$ torsion summand of
$\mu(F)$. Since $h$ is surjective, the image of $h\mid_{K_{2}(\mathcal{O}%
_{F})}$ must surject onto the $p$-primary summand. Moreover, we have $j=0$, so
the map $i$ is injective by \cite[Chapter V, Corollary 6.6.2]{MR3076731}. Now
suppose $\alpha\in K_{2}(\mathcal{O}_{F})$ lies in the kernel of $h$. To check
that $\alpha$ is a divisible element, say $\alpha=M\beta$ for a given $M\geq
1$, write $\alpha=M(q-1)\beta^{\prime}$ for some $\beta^{\prime}$ in
$K_{2}(F)$. This is possible since by Theorem \ref{thmMoore} kernel elements
are divisible in $K_{2}(F)$. As the codomain of $\partial$ has order $q-1$, we
must have $\partial\left(  (q-1)\beta^{\prime}\right)  =1$, so $(q-1)\beta
^{\prime}\in K_{2}(\mathcal{O}_{F})$. Thus, $\alpha$ can be written as an
$M$-th multiple of an element in $K_{2}(\mathcal{O}_{F})$.
\end{proof}

\begin{corollary}
\label{cor_K2O0DivisibleForOddPrimes}Suppose $p$ is odd. Then the group
$K_{2}(\mathcal{O}_{0})$ is divisible.
\end{corollary}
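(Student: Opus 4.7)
The plan is to deduce this directly from Corollary \ref{cor_HilbIntegralPart} applied with $F:=F_{0}$ (so $\mathcal{O}_{F}=\mathcal{O}_{0}$), the key extra input being that the $p$-primary roots of unity in $F_{0}$ are trivial once $p$ is odd.

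First, I would invoke the corollary with the unramified field $F_{0}/\mathbb{Q}_{p}$ in place of $F$. It provides a surjection
\[
h\mid_{K_{2}(\mathcal{O}_{0})}\colon K_{2}(\mathcal{O}_{0})\longrightarrow \mu(F_{0})[p^{\infty}]
\]
whose kernel is a divisible group. Thus to finish, it suffices to show that the target vanishes, for then $K_{2}(\mathcal{O}_{0})$ itself coincides with the kernel and is therefore divisible.

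Second, I would verify that $\mu(F_{0})[p^{\infty}]=1$ for $p$ odd. If there were a primitive $p$-th root of unity $\zeta_{p}\in F_{0}$, then $\mathbb{Q}_{p}(\zeta_{p})\subseteq F_{0}$. But $\mathbb{Q}_{p}(\zeta_{p})/\mathbb{Q}_{p}$ is totally ramified of degree $p-1$, whereas $F_{0}/\mathbb{Q}_{p}$ is unramified by definition. For $p$ odd we have $p-1\geq 2$, giving a contradiction. Hence no nontrivial $p$-power root of unity lies in $F_{0}$, and $\mu(F_{0})[p^{\infty}]=1$.

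Combining the two steps, the surjection above is the zero map, so the entire group $K_{2}(\mathcal{O}_{0})$ coincides with the divisible kernel, proving the corollary. The only point where the hypothesis $p$ odd is genuinely used is the second step; at $p=2$ the field $\mathbb{Q}_{2}\subseteq F_{0}$ already contains $\zeta_{2}=-1$, so $\mu(F_{0})[2^{\infty}]\neq 1$ and this argument breaks down, consistent with the paper's general exclusion of $p=2$.
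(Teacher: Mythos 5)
Your proof is correct and follows the same route as the paper: apply Corollary \ref{cor_HilbIntegralPart} with $F:=F_{0}$ and then observe that $\mu(F_{0})[p^{\infty}]$ is trivial for odd $p$ because $\mathbb{Q}_{p}(\zeta_{p})/\mathbb{Q}_{p}$ would be a nontrivial totally ramified subextension of the unramified $F_{0}/\mathbb{Q}_{p}$. The only cosmetic difference is that the paper additionally spells out the Eisenstein-polynomial argument for why $\mathbb{Q}_{p}(\zeta_{p})/\mathbb{Q}_{p}$ is totally ramified of degree $p-1$, which you invoke as classical.
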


\begin{proof}
Apply Corollary \ref{cor_HilbIntegralPart} to the field $F:=F_{0}$. Suppose
$p$ is odd. As $F_{0}/\mathbb{Q}_{p}$ is unramified by construction, it does
not contain any non-trivial $p$-power roots of unity (This is classical.
Observe that otherwise $\mathbb{Q}_{p}(\zeta_{p}-1)/\mathbb{Q}_{p}$ is a
subextension, but its minimal polynomial $((T+1)^{p}-1)/T$ is Eisenstein and
thus this is a non-trivial degree $p-1>1$ totally ramified field extension),
so $\mu(F_{0})[p^{\infty}]=0$. The case $p=2$ is different and the claim would
be false.
\end{proof}

We are ready to prove a key property for our study of the $K$-theory of the
rings $R_{m}$. For $n:=\#\mu(F)$ write $n=p^{k}(q-1)$ with $k\geq0$. Below, we
shall refer to the value of $k$ on several occasions.

\begin{lemma}
\label{lemma_TypeCSymbols}Suppose $p$ is odd. Then for all sufficiently large
$m\geq1$ it holds that for any $u,v\in1+\mathfrak{\tilde{m}}_{m}$ the Hilbert
symbol $h(u,v)$ vanishes. If $k\geq1$, then any%
\[
m>pe_{1}+(k-1)e
\]
is sufficiently large. For $k=0$, any $m\geq1$ works.
\end{lemma}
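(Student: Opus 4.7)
The plan is to split the argument by whether $k=0$ or $k\geq 1$, and in the latter case to factor each of $u,v$ into a piece coming from $1+p\mathcal{O}_{0}$ and a piece sitting deep in the higher unit filtration of $F$.

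If $k=0$, then $\mu(F)\cong\kappa^{\times}$ has order $q-1$ prime to $p$, so Lemma \ref{lemma_HilbertBecomesTameSymbol} (with $p^{k}=1$) identifies $h$ with the tame symbol $\partial$. Any $u,v\in 1+\mathfrak{\tilde{m}}_{m}\subseteq U_{F}^{1}$ are principal units, so $v(u)=v(v)=0$ and the explicit formula \eqref{lee1} gives $\partial\{u,v\}=1$, proving the claim for every $m\geq 1$.

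Now assume $k\geq 1$ and $m>pe_{1}+(k-1)e$. Any $u\in 1+\mathfrak{\tilde{m}}_{m}=1+p\mathcal{O}_{0}+\mathfrak{m}^{m}\mathcal{O}_{F}$ can be written as $u=1+p\alpha+\pi^{m}\gamma$, and setting $u_{1}:=1+p\alpha\in 1+p\mathcal{O}_{0}$ we check $u_{2}:=u u_{1}^{-1}=1+\pi^{m}\gamma(1+p\alpha)^{-1}\in U_{F}^{m}$, since $1+p\alpha\in\mathcal{O}_{F}^{\times}$. Factor $v=v_{1}v_{2}$ analogously. Bilinearity of $h$ yields
\[
h(u,v)=h(u_{1},v_{1})\cdot h(u_{1},v_{2})\cdot h(u_{2},v_{1})\cdot h(u_{2},v_{2}).
\]
The first obstacle is to kill the three terms involving $u_{2}$ or $v_{2}$. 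By Hasse's Lemma \ref{lemma_Hasse} (2), the hypothesis $m>pe_{1}+(k-1)e$ gives $U_{F}^{m}\subseteq(U_{F}^{1})^{p^{k}}$, so we may write $u_{2}=w^{p^{k}}$ and $v_{2}=w'^{p^{k}}$ with $w,w'\in U_{F}^{1}$. For any unit $y\in\mathcal{O}_{F}^{\times}$, Lemma \ref{lemma_HilbertBecomesTameSymbol} combined with \eqref{lee1} gives $h(u_{2},y)=h(w,y)^{p^{k}}=\partial\{w,y\}=1$, since both $w$ and $y$ have valuation zero; the same argument handles $h(u_{1},v_{2})$ and $h(u_{2},v_{2})$.

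It remains to show $h(u_{1},v_{1})=1$, and this is where the assumption that $p$ is odd enters. The elements $u_{1},v_{1}\in 1+p\mathcal{O}_{0}\subseteq\mathcal{O}_{0}^{\times}$ are units in the unramified ring of integers $\mathcal{O}_{0}$, so by the Dennis--Stein surjectivity (Proposition \ref{prop_DennisStein}) the symbol $\{u_{1},v_{1}\}$ lies in the image of $K_{2}(\mathcal{O}_{0})\rightarrow K_{2}(F_{0})\rightarrow K_{2}(F)$. By Corollary \ref{cor_K2O0DivisibleForOddPrimes}, $K_{2}(\mathcal{O}_{0})$ is divisible when $p$ is odd, hence so is its image under the composition of $h$ with these maps. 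Since $\mu(F)$ is finite, the only divisible element is the identity, and we conclude $h(u_{1},v_{1})=1$, completing the proof. The main conceptual point is that the high-filtration factors $u_{2},v_{2}$ are absorbed by Hasse, while the remaining $1+p\mathcal{O}_{0}$-part is neutralised by the fact that all wild ramification phenomena for $F_{0}$ are trivial.
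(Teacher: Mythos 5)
Your proof is correct, and the overall skeleton matches the paper's: the same decomposition $u=u_1u_2$ with $u_1\in 1+p\mathcal{O}_0$ and $u_2\in U_F^m$, the same appeal to Hasse's Lemma \ref{lemma_Hasse}(2) to get $U_F^m\subseteq(U_F^1)^{p^k}$, and the same divisibility argument (Corollary \ref{cor_K2O0DivisibleForOddPrimes}) to dispose of $h(u_1,v_1)$. You depart from the paper in two places, both of which work and are arguably a bit more economical. For $k=0$, the paper invokes Corollary \ref{cor_HilbIntegralPart} to conclude that $K_2(\mathcal{O}_F)$ is divisible, whereas you simply observe that $h$ is the tame symbol (via Lemma \ref{lemma_HilbertBecomesTameSymbol} with $p^k=1$) and the explicit formula kills any pair of units outright. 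For the mixed and deep-filtration terms when $k\geq 1$, the paper upgrades $\beta=\beta_0^{p^k}$ to $\beta=\beta_{00}^n$ using the $\mathbb{Z}_p$-module structure of $U_F^1$ and then quotes that $h$ takes values in a group of order $n$; you instead stop at $\beta=w^{p^k}$ and pass through Lemma \ref{lemma_HilbertBecomesTameSymbol} to rewrite $h(u_2,y)=h(w,y)^{p^k}$ as the tame symbol $\partial\{w,y\}$, which is $1$ for units. The only thing worth spelling out in your write-up is that the equality $h(w,y)^{p^k}=\partial\{w,y\}$ lives in $\kappa^\times$ after the Teichm\"uller identification of $\mu(F)[\text{prime-to-}p]$ with $\kappa^\times$; since the $p^k$-th power lands in that prime-to-$p$ summand and the identification is an isomorphism, triviality in $\kappa^\times$ does imply triviality in $\mu(F)$, so the step is valid but deserves a sentence.
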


\begin{proof}
Suppose $k=0$. Then $F$ has no non-trivial $p$-power roots of unity. Hence, by
Corollary \ref{cor_HilbIntegralPart} the group $K_{2}(\mathcal{O}_{F})$ is
divisible. By the factorization%
\begin{equation}
K_{2}(R_{m})\longrightarrow K_{2}(\mathcal{O}_{F})\overset{h}{\longrightarrow
}\mu(F) \label{lcd0}%
\end{equation}
coming from $R_{m}\subseteq\mathcal{O}_{F}$, the Hilbert symbol must vanish
(the image of a divisible element is divisible, but since $\mu(F)$ is a finite
group, it follows that the map $h\mid_{K_{2}(\mathcal{O}_{F})}$ is zero). This
case was easy. For the rest of the proof we suppose $k\geq1$. By Lemma
\ref{lemma_Hasse} we have%
\begin{equation}
U_{F}^{m}\subseteq(U_{F}^{1})^{p^{k}}\text{.} \label{lcd}%
\end{equation}
Now write%
\[
u=1-px+\pi^{m}y\qquad\text{(}x\in\mathcal{O}_{0}\text{ and }y\in
\mathcal{O}_{F}\text{)}%
\]
using that $\mathfrak{\tilde{m}}_{m}=p\mathcal{O}_{0}+\mathfrak{m}%
^{m}\mathcal{O}_{F}$ by Equation \ref{lc1c}. Then%
\begin{align*}
u  &  =\left(  1-px\right)  \cdot\left(  \frac{1-px+\pi^{m}y}{1-px}\right)
=(1-px)\cdot\left(  1+\frac{\pi^{m}y}{1-px}\right) \\
&  =(1-px)\cdot\left(  1+\pi^{m}y\sum_{l\geq0}(px)^{l}\right)  =(1-px)\cdot
\beta\text{.}%
\end{align*}
Note that $\beta\in1+\mathfrak{m}^{m}\mathcal{O}_{F}$, i.e. $\beta\in
U_{F}^{m}$. We can perform an analogous decomposition for $v$, say
$v=(1-py)\cdot\beta^{\prime}$ with $\beta^{\prime}\in U_{F}^{m}$. Then%
\[
\{u,v\}=\{1-px,1-py\}+\{1-px,\beta^{\prime}\}+\{\beta,1-py\}+\{\beta
,\beta^{\prime}\}\text{.}%
\]
We now work in $K_{2}(F)$. As $\beta\in U_{F}^{m}$, we may write $\beta
=\beta_{0}^{p^{k}}$ by Equation \ref{lcd}, and as $U_{F}^{1}$ is a
$\mathbb{Z}_{p}$-module, even $\beta=\beta_{00}^{p^{k}(q-1)}=\beta_{00}^{n}$.
The same works for $\beta^{\prime}$. Thus, the term $\{1-px,\beta^{\prime
}\}+\{\beta,1-py\}+\{\beta,\beta^{\prime}\}$ is an $n$-th multiple of
something. Therefore under $h$, which is a map to a group of order $n$, these
elements must map to zero. It remains to check that%
\[
h(1-px,1-py)=0\text{,}%
\]
but this is clear since $1-px,1-py\in\mathcal{O}_{0}$, so our element comes
from the left under the composition $K_{2}(\mathcal{O}_{0})\rightarrow
K_{2}(R_{m})\rightarrow K_{2}(F)$, but $K_{2}(\mathcal{O}_{0})$ is divisible
by Corollary \ref{cor_K2O0DivisibleForOddPrimes}, and mapping a divisible
element to a group of order $n$ again must map the element to zero. This is
the same idea as we had used in Equation \ref{lcd0}, but with a reversed
direction of inclusion, $\mathcal{O}_{0}\subseteq R_{m}$. This finishes the proof.
\end{proof}

\begin{proposition}
\label{prop_1}Suppose $p$ is odd. For $m\geq1$ sufficiently big, the Hilbert
symbol vanishes on the image of $K_{2}(R_{m})\rightarrow K_{2}(F)$. If
$k\geq1$, then any%
\[
m>pe_{1}+(k-1)e
\]
is sufficiently large. For $k=0$, any $m\geq1$ works.
\end{proposition}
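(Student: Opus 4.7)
The plan is to reduce the statement to a case analysis on pure symbols via Dennis--Stein and to pipe each case into a result already established. First, the case $k=0$ is formal: the inclusion $R_{m}\subseteq\mathcal{O}_{F}$ factors $K_{2}(R_{m})\rightarrow K_{2}(F)$ through $K_{2}(\mathcal{O}_{F})\rightarrow K_{2}(F)$, and Corollary \ref{cor_HilbIntegralPart} tells us that $h$ restricted to the image of $K_{2}(\mathcal{O}_{F})$ lands in $\mu(F)[p^{\infty}]$, which is trivial when $k=0$. So any $m\geq1$ works in this case, and for the rest of the argument I assume $k\geq1$ and fix $m>pe_{1}+(k-1)e$.

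By Proposition \ref{prop_DennisStein} (Dennis--Stein), every element of $K_{2}(R_{m})$ is a sum of symbols $\{u,v\}$ with $u,v\in R_{m}^{\times}$, so it is enough to kill each such symbol under $h$. Using the decomposition $R_{m}^{\times}\cong\langle\zeta_{q-1}\rangle\times(1+\mathfrak{\tilde{m}}_{m})$ from Lemma \ref{lemma_Rm_main}, write $u=\zeta_{q-1}^{i}u_{0}$ and $v=\zeta_{q-1}^{j}v_{0}$ with $u_{0},v_{0}\in1+\mathfrak{\tilde{m}}_{m}$. Bilinearity then yields
\[
\{u,v\}=ij\{\zeta_{q-1},\zeta_{q-1}\}+i\{\zeta_{q-1},v_{0}\}+j\{u_{0},\zeta_{q-1}\}+\{u_{0},v_{0}\}
\]
in $K_{2}(F)$, and it suffices to handle each of the four summands separately.

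The summand $\{u_{0},v_{0}\}$ is precisely the situation of Lemma \ref{lemma_TypeCSymbols} and vanishes under $h$ by our choice of $m$. The summand $\{\zeta_{q-1},\zeta_{q-1}\}$ comes from $K_{2}(\mathcal{O}_{0})$ since $\zeta_{q-1}\in\mathcal{O}_{0}^{\times}$; but $K_{2}(\mathcal{O}_{0})$ is divisible by Corollary \ref{cor_K2O0DivisibleForOddPrimes} (this is where $p$ odd is used), and a divisible element of the finite group $\mu(F)$ must be zero. For the mixed symbol $\{\zeta_{q-1},v_{0}\}$ (the other mixed symbol is symmetric), I would combine Lemma \ref{lemma_Hasse}(2) with our choice of $m$ to write $v_{0}\in U_{F}^{m}\subseteq(U_{F}^{1})^{p^{k}}$ as $v_{0}=w^{p^{k}}$ for some $w\in U_{F}^{1}$; since $p$ is odd, $q-1$ is a unit in $\mathbb{Z}_{p}$, and the $\mathbb{Z}_{p}$-module structure on $U_{F}^{1}$ from Lemma \ref{lemma_PrincipalUnitsZpModuleStructure} lets me further write $w=w_{1}^{q-1}$. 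Combining, $v_{0}=w_{1}^{n}$ with $n=p^{k}(q-1)=\#\mu(F)$, so $\{\zeta_{q-1},v_{0}\}$ is an $n$-th multiple in $K_{2}(F)$ and its image under $h$ lies in the $n$-th power subgroup of $\mu(F)$, which is trivial.

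The main obstacle has in fact already been absorbed by Lemma \ref{lemma_TypeCSymbols}, which handled the genuinely hard Type $\{u_0,v_0\}$ symbols. What remains is systematic bookkeeping: Dennis--Stein plus the unit-group decomposition reduce to four cases, and each of the three remaining cases collapses to a divisibility statement in either $K_{2}(\mathcal{O}_{0})$ or $U_{F}^{1}$, exploiting that $p$ odd makes $q-1$ coprime to $p$. No explicit computation with the Hilbert symbol itself is needed.
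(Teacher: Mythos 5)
Your overall strategy mirrors the paper: reduce to pure symbols via Dennis--Stein, decompose $R_{m}^{\times}\cong\langle\zeta_{q-1}\rangle\times(1+\tilde{\mathfrak{m}}_{m})$, and handle each symbol type. The $k=0$ reduction, the appeal to Lemma \ref{lemma_TypeCSymbols} for $\{u_{0},v_{0}\}$, and the treatment of $\{\zeta_{q-1},\zeta_{q-1}\}$ (it lies in the image of $K_{2}(\mathcal{O}_{0})$, which is divisible by Corollary \ref{cor_K2O0DivisibleForOddPrimes}, hence maps to zero in a finite group) are all correct; that last step is in fact cleaner than the paper's own, which instead adapts the proof that $K_{2}(\mathbb{F}_{q})=0$.

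However, your treatment of the mixed symbol $\{\zeta_{q-1},v_{0}\}$ has a genuine gap. You claim $v_{0}\in U_{F}^{m}$ in order to invoke Lemma \ref{lemma_Hasse}(2), but $v_{0}$ only lies in $1+\tilde{\mathfrak{m}}_{m}$, and for $m\geq1$ Equation \ref{lc1c} gives $\tilde{\mathfrak{m}}_{m}=p\mathcal{O}_{0}+\mathfrak{m}^{m}\mathcal{O}_{F}$, which strictly contains $\mathfrak{m}^{m}\mathcal{O}_{F}$ once $m>e$. Since $k\geq1$ forces $e\geq2$ and $m>pe_{1}+(k-1)e>e$, this strict containment always holds in the relevant range, so $v_{0}$ need not be in $U_{F}^{m}$, and the conclusion $v_{0}=w^{p^{k}}$ does not follow. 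The $\mathbb{Z}_{p}$-module structure of Lemma \ref{lemma_PrincipalUnitsZpModuleStructure} also cannot supply a $p^{k}$-th root, as $p$ is not a unit in $\mathbb{Z}_{p}$.

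The repair is easy and reveals that the Hasse detour is unnecessary here: $\zeta_{q-1}$ is $(q-1)$-torsion, so you only need to extract a $(q-1)$-th root, not an $n$-th root. Since $(q-1,p)=1$, $\frac{1}{q-1}\in\mathbb{Z}_{p}$, and the $\mathbb{Z}_{p}$-module structure on $1+\tilde{\mathfrak{m}}_{m}$ gives $w_{1}:=v_{0}^{1/(q-1)}\in1+\tilde{\mathfrak{m}}_{m}$. Then
\[
\{\zeta_{q-1},v_{0}\}=\{\zeta_{q-1},w_{1}^{q-1}\}=\{\zeta_{q-1}^{q-1},w_{1}\}=\{1,w_{1}\}=0
\]
already in $K_{2}(F)$, not merely under $h$. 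This is precisely the paper's argument for type (B). So the proposal is correct in outline but contains one false intermediate claim in the mixed-symbol case, which is patched by dropping Hasse's lemma and using only the $\mathbb{Z}_{p}$-divisibility by $q-1$.
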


\begin{proof}
We had shown that $R_{m}$ is a local ring in Lemma \ref{lemma_Rm_main}. By
Equation \ref{lc1aa} we have%
\begin{equation}
R_{m}^{\times}\cong\left\langle \zeta_{q-1}\right\rangle \times
(1+\mathfrak{\tilde{m}}_{m})\text{.} \label{lmix1}%
\end{equation}
Now by Proposition \ref{prop_DennisStein} the Hilbert symbol vanishes on
$K_{2}(R_{m})$ once it vanishes on all pure symbols $\{a,b\}$ with $a,b\in
R_{m}^{\times}$. Using the decomposition in Equation \ref{lmix1} it suffices
to check this for%
\[
\text{(A) }\{\zeta_{q-1},\zeta_{q-1}\}\text{,}\qquad\text{(B) }\{\zeta
_{q-1},u\}\text{,}\qquad\text{(C) }\{u,v\}
\]
with $u,v\in1+\mathfrak{\tilde{m}}_{m}$. The symbols of type (A) and (B) are
zero by adapting \cite[Chapter IX, (4.1) Lemma]{MR1915966} to $R_{m}$. We
provide the details: (B) All pure symbols $\{\zeta_{q-1},u\}$ with
$u\in1+\mathfrak{\tilde{m}}_{m}$ are zero. To see this, note that $\zeta
_{q-1}$ is $(q-1)$-torsion, but since $1+\mathfrak{\tilde{m}}_{m}$ is a
$\mathbb{Z}_{p}$-module\ (Lemma \ref{lemma_PrincipalUnitsZpModuleStructure})
and $\frac{1}{q-1}\in\mathbb{Z}_{p}$, we can take a root,%
\[
\{\zeta_{q-1},u\}=\{\zeta_{q-1},(u^{\frac{1}{q-1}})^{q-1}\}=\{\zeta
_{q-1}^{q-1},u^{\frac{1}{q-1}}\}=0\text{.}%
\]
Moreover, the symbols (A) are zero. To this end, copy the proof that
$K_{2}(\mathbb{F}_{q})=0$, which holds up to an error in $1+p\mathcal{O}_{0}$
in $\mathcal{O}_{0}$, but $1+p\mathcal{O}_{0}$ is also a $\mathbb{Z}_{p}%
$-module under exponentiation, so the same argument as for (B) works. Thus, it
remains to consider the symbols of type (C). This case is covered by\ Lemma
\ref{lemma_TypeCSymbols}.
\end{proof}

\begin{lemma}
\label{Lemma_DivisibleElementsLieInAllK2}Suppose $\alpha\in K_{2}(F)$ is a
divisible element in the group. Then for any $c\geq0$ the element lies in the
image of $K_{2}(R_{c})\rightarrow K_{2}(F)$ induced from the inclusion
$R_{c}\subset F$.
\end{lemma}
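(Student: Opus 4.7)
The plan is to use divisibility of $\alpha$ together with the symbol calculus from Lemmas \ref{lemmaK1} and \ref{lemmaK2} to rewrite $\alpha$ as an explicit sum of symbols $\{x,y\}$ with $x,y\in R_c^{\times}$; write $I_c$ for the image of $K_2(R_c)\to K_2(F)$. The cases $c\leq 1$ are easy: for $c=0$ the localization sequence plus finiteness of $\kappa^{\times}$ forces $\partial\alpha=0$, so $\alpha\in K_2(\mathcal{O}_F)=I_0$; and for $c=1$ the Eisenstein presentation yields $R_1=\mathcal{O}_0+\mathfrak{m}\mathcal{O}_F=\mathcal{O}_F$, whence $I_1=I_0$.

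For $c\geq 2$, the first step is to fix an integer $N$ divisible by $q-1$ and large enough in the $p$-adic sense that $(U_F^1)^N\subseteq U_F^c\subseteq 1+\mathfrak{\tilde{m}}_c$; such $N$ exists by Lemma \ref{lemma_Hasse}(1). The divisibility of $\alpha$ lets us write $\alpha=N^3\beta$ for some $\beta\in K_2(F)$, and expanding $\beta$ via Lemma \ref{lemmaK1} gives $\beta=\{\pi,w_0\}+\sum_i\{u_i,v_i\}$ with $w_0,u_i,v_i\in\mathcal{O}_F^{\times}$.

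Each $\{u_i,v_i\}$-piece contributes $N^3\{u_i,v_i\}=\{u_i^{N^2},v_i^N\}$: the Teichm\"uller parts die because $(q-1)\mid N$, and the remaining $U_F^1$-parts are pushed into $1+\mathfrak{\tilde{m}}_c$ by the choice of $N$, so both entries sit in $R_c^{\times}$. For the $\{\pi,w_0\}$-piece, absorb one factor of $N$ first: $N^3\{\pi,w_0\}=N^2\{\pi,w_0^N\}$. Since $w_0^N\in U_F^c\subseteq U_F^2$, Lemma \ref{lemmaK2} replaces $\{\pi,w_0^N\}$ by some $\{a,b\}$ with $a,b\in U_F^1$; then $N^2\{a,b\}=\{a^N,b^N\}$, and by the same Hasse bound both entries now lie in $R_c^{\times}$. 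Summing, $\alpha$ is expressed as a sum of symbols from $R_c^{\times}\otimes R_c^{\times}$, so $\alpha\in I_c$.

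The main obstacle is that Lemma \ref{lemmaK2} only places the substitutes $a,b$ in $U_F^1$ rather than already in the finer group $1+\mathfrak{\tilde{m}}_c$, so one extra factor of $N$ is needed to push them the rest of the way. This is precisely why the plan uses $N^3$-divisibility instead of the more obvious $N^2$: one $N$ is consumed when invoking Lemma \ref{lemmaK2} on $w_0^N$, and the remaining $N^2$ is what simultaneously raises both $a$ and $b$ into $R_c^{\times}$.
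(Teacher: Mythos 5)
Your argument coincides with the paper's proof in all essentials: both use divisibility to write $\alpha$ as $M^3\beta$ (with $M$ a multiple of $q-1$ chosen via Lemma \ref{lemma_Hasse}), expand $\beta$ by Lemma \ref{lemmaK1}, absorb one factor of $M$ into the $\{\pi,w_0\}$-term so Lemma \ref{lemmaK2} applies, and then use the remaining powers of $M$ to push all symbol entries into $U_F^c \subseteq R_c^{\times}$. The only cosmetic difference is that you treat $c\le1$ directly via $R_1=\mathcal{O}_F$, whereas the paper simply notes at the end that the $c\ge2$ case implies the rest because $R_c$ shrinks as $c$ grows; both are fine.
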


\begin{proof}
Fix any integer $c\geq2$. Pick $M:=(q-1)M_{0}$ such that $M_{0}$ is a
sufficiently large $p$-power such that%
\[
\left(  U_{F}^{1}\right)  ^{M_{0}}\subseteq U_{F}^{c}%
\]
holds. This is possible by Lemma \ref{lemma_Hasse}. As Equation \ref{lc1a}
asserts that $\mathcal{O}_{F}^{\times}\cong\left\langle \zeta_{q-1}%
\right\rangle \times U_{F}^{1}$, we deduce that $\left(  \mathcal{O}%
_{F}^{\times}\right)  ^{M}\subseteq U_{F}^{c}$. As $\alpha$ is divisible, we
find some $\beta\in K_{2}(F)$ such that%
\[
\alpha=M^{3}\cdot\beta
\]
holds in $K_{2}(F)$. We may write $\beta=\{\pi,u_{0}\}+\sum_{i=1}^{r}%
\{u_{i},v_{i}\}$ for some $u_{i},v_{i}\in\mathcal{O}_{F}^{\times}$ by Lemma
\ref{lemmaK1}. Now,%
\[
M^{2}\cdot\{u_{i},v_{i}\}=\{u_{i}^{M},v_{i}^{M}\}\in\{U_{F}^{c},U_{F}^{c}\}
\]
and%
\[
M^{3}\cdot\{\pi,u_{0}\}=M^{2}\cdot\{\pi,u_{0}^{M}\}=M^{2}\cdot\{a,b\}=\{a^{M}%
,b^{M}\}\in\{U_{F}^{c},U_{F}^{c}\}
\]
with $a,b\in U_{F}^{1}$ supplied by\ Lemma \ref{lemmaK2} (using that
$u_{0}^{M}\in U_{F}^{2}$). Since $U_{F}^{c}=1+\mathfrak{m}^{c}\mathcal{O}%
_{F}\subseteq R_{c}$, we obtain that $\alpha=M^{3}\beta\in\operatorname*{im}%
K_{2}(R_{c})$. This proves our claim for $c\geq2$. The smaller the $c$, the
bigger the ring $R_{c}$, so we get the claim for all $c\geq0$.
\end{proof}

Since $(R_{m},\mathfrak{\tilde{m}}_{m})$ is a local ring with field of
fractions $F$ by Lemma \ref{lemma_Rm_main}, we get a localization fibration
sequence%
\begin{equation}
K_{\mathfrak{\tilde{m}}_{m}}(R_{m})\longrightarrow K(R_{m})\longrightarrow
K(F) \label{lgitto4}%
\end{equation}
of spectra. It induces a long exact sequence of homotopy groups%
\[
\cdots\longrightarrow K_{2}(R_{m})\longrightarrow K_{2}(F)\overset{\partial
}{\longrightarrow}K_{\mathfrak{\tilde{m}}_{m},1}(R_{m})\overset{\beta
}{\longrightarrow}K_{1}(R_{m})\overset{\gamma}{\longrightarrow}K_{1}%
(F)\longrightarrow\cdots\text{,}%
\]
where $\partial$ denotes the connecting homomorphism $\partial\colon
K(F)\rightarrow\Sigma K_{\mathfrak{\tilde{m}}_{m}}(R_{m})$. The greek letters
serve the purpose to refer to these arrows below.

\begin{theorem}
\label{thm_LocalMain}Suppose $p$ is odd.

\begin{enumerate}
\item Then the sequence of subgroups%
\[
\operatorname*{im}K_{2}(R_{0})\supseteq\operatorname*{im}K_{2}(R_{1}%
)\supseteq\operatorname*{im}K_{2}(R_{2})\supseteq\cdots
\]
of $K_{2}(F)$ becomes stationary after finitely many steps. In particular,
there is a unique minimal $m_{0}\geq0$ such that%
\begin{equation}
\operatorname*{im}K_{2}(R_{m_{0}})=\bigcap_{c\geq0}\operatorname*{im}%
K_{2}(R_{c})\text{.} \label{lcd1}%
\end{equation}

\item The group in Equation \ref{lcd1} agrees with the kernel of the Hilbert
symbol on $K_{2}(F)$.

\item If $F/\mathbb{Q}_{p}$ is unramified, $m_{0}=0$.

\item For any $m\geq m_{0}$ there is a canonical isomorphism
\[
\phi\colon K_{\mathfrak{\tilde{m}}_{m},1}(R_{m})\cong\mu(F)
\]
such that%
\begin{equation}%
\xymatrix{
\cdots\ar[r] & K_2(R_m) \ar[r] \ar[dr]_{0} & K_2(F) \ar[r]^-{\partial}
\ar[d]_{h}
& K_{\tilde{\mathfrak{m}}_m,1}(R_m) \ar[r] \ar@{-->}[dl]^{\phi} & 0 \\
& & \mu(F)
}
\label{lcd1a}%
\end{equation}
commutes.
\end{enumerate}
\end{theorem}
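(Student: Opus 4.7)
The overall plan is to pinpoint the stable intersection $\bigcap_c \operatorname{im} K_2(R_c)$ by a sandwich argument, and then use the localization sequence for the local ring $R_m$ together with Dennis--Stein to identify the boundary map with the Hilbert symbol. Since $R_0 \supseteq R_1 \supseteq R_2 \supseteq \cdots$, the chain of images is descending. Proposition \ref{prop_1} places $\operatorname{im} K_2(R_m) \subseteq \ker(h)$ as soon as $m$ exceeds the explicit bound ($m > pe_1 + (k-1)e$ if $k \geq 1$, and $m \geq 1$ otherwise). In the reverse direction, Moore's Theorem \ref{thmMoore} asserts that $\ker(h)$ is a divisible subgroup of $K_2(F)$, so Lemma \ref{Lemma_DivisibleElementsLieInAllK2} applies to every element of $\ker(h)$ and yields $\ker(h) \subseteq \operatorname{im} K_2(R_c)$ for every $c \geq 0$. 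Combining both inclusions forces $\operatorname{im} K_2(R_m) = \ker(h)$ once $m$ is past the bound; since the chain descends and reaches $\ker(h)$ in finitely many steps, it stabilizes there, and taking $m_0$ to be the smallest index of stabilization proves (1) and (2).

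For (3), specialize to $F/\mathbb{Q}_p$ unramified, so $F = F_0$ and $R_0 = \mathcal{O}_0$. Corollary \ref{cor_K2O0DivisibleForOddPrimes} (using $p$ odd) makes $K_2(\mathcal{O}_0)$ divisible, and since $\mu(F)$ is finite the composite $K_2(R_0) \rightarrow K_2(F) \overset{h}{\rightarrow} \mu(F)$ vanishes. Hence $\operatorname{im} K_2(R_0) \subseteq \ker(h)$, and combined with the reverse inclusion from Lemma \ref{Lemma_DivisibleElementsLieInAllK2} applied at $c = 0$ we conclude $\operatorname{im} K_2(R_0) = \ker(h)$, i.e. $m_0 = 0$.

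For (4), fix $m \geq m_0$ and apply the long exact sequence derived from \ref{lgitto4}. Since $R_m$ is a local ring (Lemma \ref{lemma_Rm_main}), Dennis--Stein (Proposition \ref{prop_DennisStein}) identifies $K_1(R_m) = R_m^\times$ and $K_1(F) = F^\times$, and the map $\gamma$ is the inclusion $R_m^\times \hookrightarrow F^\times$, which is injective. Exactness forces $\beta = 0$, hence $\partial$ is surjective and induces a canonical isomorphism $K_2(F)/\operatorname{im} K_2(R_m) \cong K_{\mathfrak{\tilde{m}}_m, 1}(R_m)$. Since $\operatorname{im} K_2(R_m) = \ker(h)$ by part (2) and $h$ is surjective by Moore, $h$ factors through a canonical isomorphism $K_2(F)/\ker(h) \cong \mu(F)$. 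Defining $\phi$ as the composite of the inverse of the first isomorphism with the second yields the desired map, and $\phi \circ \partial = h$ holds by construction, so Diagram \ref{lcd1a} commutes automatically. The main conceptual hurdle is really the sandwich argument of the first paragraph; once that is secured, the construction of $\phi$ is a formal consequence of exactness of the localization sequence together with Dennis--Stein.
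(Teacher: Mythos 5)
Your proposal is correct and follows essentially the same route as the paper: the sandwich argument for parts (1) and (2) using Proposition~\ref{prop_1}, Theorem~\ref{thmMoore}, and Lemma~\ref{Lemma_DivisibleElementsLieInAllK2}; Corollary~\ref{cor_K2O0DivisibleForOddPrimes} for (3); and the localization sequence together with $K_1(R_m)\cong R_m^\times$ (so $\gamma$ is injective, hence $\beta=0$) plus the universal property of cokernels for (4). The only cosmetic difference is that the paper cites a separate reference for the identification $K_1(R_m)\cong R_m^\times$, though this is also covered by Proposition~\ref{prop_DennisStein}.
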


\begin{proof}
(Claim 1) By Proposition \ref{prop_1} for $m$ sufficiently big we obtain that
$h(\operatorname*{im}K_{2}(R_{m}))=0$. By Theorem \ref{thmMoore} it follows
that for any such $m$ the elements in $\operatorname*{im}\left(  K_{2}%
(R_{m})\right)  $ are divisible elements in $K_{2}(F)$. By Lemma
\ref{Lemma_DivisibleElementsLieInAllK2} any divisible element lies in
$\bigcap_{c\geq0}\operatorname*{im}K_{2}(R_{c})$. Thus, after finitely many
steps the sequence becomes stationary and then equals $\bigcap_{c\geq
0}\operatorname*{im}K_{2}(R_{c})$. Pick $m_{0}$ to be the minimal
such.\newline(Claim 2) Since any element in $\operatorname*{im}K_{2}(R_{c})$
for $c$ sufficiently large must be divisible by Proposition \ref{prop_1}, but
the Hilbert symbol maps to a finite group, the group in Equation \ref{lcd1} is
contained in the kernel of $h$. Again by Theorem \ref{thmMoore} the kernel of
the Hilbert symbol consists of divisible elements, so Lemma
\ref{Lemma_DivisibleElementsLieInAllK2} yields the reverse inclusion.\newline%
(Claim 3) If $F/\mathbb{Q}_{p}$ is unramified, then $F=F_{0}$, so
$R_{0}=\mathcal{O}_{F}=\mathcal{O}_{0}$ and by Lemma
\ref{cor_K2O0DivisibleForOddPrimes} $K_{2}(R_{0})=K_{2}(\mathcal{O}_{0})$ is a
divisible group, so already on the largest possible group in our filtration
the Hilbert symbol must be trivial, i.e. $m_{0}=0$ already does the trick
according to Claim 2.\newline(Claim 4) Since $R_{m}$ is a local ring and $F$ a
field, we may use \cite[Example 1.6]{MR2371852} to see that the determinant
induces the downward isomorphisms in%
\[%
\xymatrix{
K_1(R) \ar[d]_{\cong} \ar[r]^{\gamma} & K_1(F) \ar[d]^{\cong} \\
R^{\times} \ar@{^{(}->}[r] & F^{\times},
}%
\]
showing that $\gamma$ is injective, and thus $\beta=0$. Hence, we obtain
Figure \ref{lcd1a}. The top row now yields%
\[
K_{2}(F)/\operatorname*{im}K_{2}(R_{m})\underset{\sim}{\overset{\partial
}{\longrightarrow}}K_{\mathfrak{\tilde{m}}_{m},1}(R_{m})\text{.}%
\]
However, by Claim 1 and Claim 2, the group $\operatorname*{im}K_{2}(R_{m})$ is
precisely $\ker(h)$, and the Hilbert symbol, the vertical arrow $h$, is
surjective by Theorem \ref{thmMoore}, so the map $\phi$ exists by the
universal property of cokernels and must be an isomorphism as well.
\end{proof}

\begin{definition}
[Local optimal order]\label{def_LocalOptimalOrder}If $p$ is odd, define
$\mathcal{R}:=R_{m_{0}}$ with $m_{0}$ as in Theorem \ref{thm_LocalMain}. For
$p=2$, take $\mathcal{R}:=\mathcal{O}_{F}$. We call $\mathcal{R}$ the
\emph{optimal order} in $F$.
\end{definition}

\section{Global theory}

We now transport the local theory to the global setting using ad\`{e}les.

\subsection{A family of global orders}

Let $F$ be a number field. If $v$ is a place, we write $F_{v}$ for the
completion at $v$. The ad\`{e}les are the restricted product%
\[
\mathbf{A}_{F}:=\left.  \underset{v\;}{\prod\nolimits^{\prime}}\right.  F_{v}%
\]
over all places. If $v$ is a finite place, let $\mathcal{O}_{v}$ denote the
ring of integers of $F_{v}$, otherwise $\mathcal{O}_{v}:=F_{v}$.

There is an exact sequence%
\begin{equation}
0\longrightarrow\mathcal{O}_{F}\overset{\operatorname*{diag}}{\longrightarrow
}F\oplus\prod_{v}\mathcal{O}_{v}\overset{\operatorname*{diff}}{\longrightarrow
}\mathbf{A}_{F}\longrightarrow0 \label{lglobaladelesequence}%
\end{equation}
with $\operatorname*{diag}(x):=(x,x,\ldots)$ the diagonal map.

Suppose $\underline{m}=(m_{v})_{v}$ is an effective Weil divisor on
$\operatorname*{Spec}\mathcal{O}_{F}$. We can define%
\begin{equation}
\mathcal{R}_{\underline{m}}:=F\cap\left(
{\textstyle\prod_{v}}
R_{v,m_{v}}\right)  \text{,} \label{lcqx2}%
\end{equation}
where the meaning of \textquotedblleft$R_{v,m_{v}}$\textquotedblright\ is as
follows: for a finite place we mean $R_{m_{v}}\subseteq\mathcal{O}_{v}$ in the
sense of Equation \ref{lcqx1}, and for an infinite place we mean $F_{v}$. For
the divisor $\underline{m}=0$ we get $\mathcal{R}_{\underline{m}}%
=\mathcal{O}_{F}$ by Equation \ref{lglobaladelesequence}. If $\underline
{m}^{\prime}\geq\underline{m}$ are effective Weil divisors on
$\operatorname*{Spec}\mathcal{O}_{F}$, we obtain a commutative diagram%
\begin{equation}%
\xymatrix{
0 \ar[r] & \mathcal{R}_{\underline{m}^{\prime}} \ar@{>->}[d] \ar
[r] & F \oplus\prod_{v} R_{v,m_v^{\prime}} \ar@{>->}[d] \ar[r] & \mathbf
{A}_F \ar[r] \ar@{=}[d] & 0 \\
0 \ar[r] & \mathcal{R}_{\underline{m}} \ar[r] & F \oplus\prod_{v} R_{v,m_v}
\ar[r] & \mathbf{A}_F \ar[r] & 0. \\
}
\label{lvvd3}%
\end{equation}
The exactness of the rows is clear, except perhaps the surjectivity on the
right. Suppose $(\alpha_{v})_{v}\in\mathbf{A}_{F}$ is an ad\`{e}le. Then for
any integer $N\geq1$ the vector $(\frac{1}{N}\alpha_{v})_{v}$ is also an
ad\`{e}le, because the ad\`{e}le ring $\mathbf{A}_{F}$ is a $\mathbb{Q}%
$-algebra. Since the right arrow in Equation \ref{lglobaladelesequence} is
surjective, we find $\beta\in F$ and $(\beta_{v})_{v}\in\prod\mathcal{O}_{v}$
such that $(\frac{1}{N}\alpha_{v})_{v}=(\beta-\beta_{v})_{v}$. In particular,
$(N\beta-N\beta_{v})_{v}$ maps to $(\alpha_{v})_{v}$, but $N\beta\in F$ and
once $N$ is sufficiently divisible by prime factors lying below those places
with $m_{v}\gneqq0$, we get $N\beta_{v}\in R_{v,m_{v}}$ for all $v$.

\begin{lemma}
The set $\mathcal{R}_{\underline{m}}\subseteq\mathcal{O}_{F}$ is a
one-dimensional Noetherian domain of finite index $[\mathcal{O}_{F}%
:\mathcal{R}_{\underline{m}}]<\infty$ and with field of fractions $F$. It is a
finitely generated free $\mathbb{Z}$-module of rank $[F:\mathbb{Q}]$.
\end{lemma}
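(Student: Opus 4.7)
The plan is to compare $\mathcal{R}_{\underline{m}}$ to $\mathcal{O}_{F}$ via the ad\`{e}le exact sequence \eqref{lglobaladelesequence} and deduce everything from the already-established local properties of the rings $R_{v,m_{v}}$ in Lemma \ref{lemma_Rm_main}, plus standard structure theorems for finitely generated abelian groups.

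First, I would specialize Diagram \eqref{lvvd3} to the case where the upper row uses the divisor $\underline{m}$ and the lower row uses $\underline{m}=0$, so that $\mathcal{R}_{\underline{0}}=\mathcal{O}_{F}$ and $R_{v,0}=\mathcal{O}_{v}$. All three vertical arrows are inclusions, and the rightmost one is an equality. Applying the snake lemma (equivalently, just chasing the diagram) then yields
\[
\mathcal{O}_{F}/\mathcal{R}_{\underline{m}}\cong\prod_{v}\mathcal{O}_{v}/R_{v,m_{v}}.
\]
Because only finitely many components of $\underline{m}$ are nonzero, the product on the right is really a finite direct sum, and each summand $\mathcal{O}_{v}/R_{v,m_{v}}$ is finite by Lemma \ref{lemma_Rm_main}. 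Hence $[\mathcal{O}_{F}:\mathcal{R}_{\underline{m}}]<\infty$.

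Next, $\mathcal{O}_{F}$ is a free $\mathbb{Z}$-module of rank $[F:\mathbb{Q}]$ by the standard theory of number fields. Since $\mathcal{R}_{\underline{m}}\subseteq\mathcal{O}_{F}$ is of finite index, the structure theorem for subgroups of a finitely generated free abelian group gives that $\mathcal{R}_{\underline{m}}$ is itself a free $\mathbb{Z}$-module of the same rank $[F:\mathbb{Q}]$. Being finitely generated as a $\mathbb{Z}$-module, it is automatically Noetherian. Being a subring of the field $F$, it is a domain. To see that its field of fractions is $F$: any $\alpha\in\mathcal{O}_{F}$ satisfies $N\alpha\in\mathcal{R}_{\underline{m}}$ for $N:=[\mathcal{O}_{F}:\mathcal{R}_{\underline{m}}]$, so $\alpha=(N\alpha)/N\in\operatorname{Frac}\mathcal{R}_{\underline{m}}$, whence $\operatorname{Frac}\mathcal{R}_{\underline{m}}\supseteq\mathcal{O}_{F}$, and since $\operatorname{Frac}\mathcal{O}_{F}=F$, we get equality.

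Finally, since $\mathcal{O}_{F}/\mathcal{R}_{\underline{m}}$ is finite, the inclusion $\mathcal{R}_{\underline{m}}\hookrightarrow\mathcal{O}_{F}$ is integral (in fact module-finite), so by going-up $\dim\mathcal{R}_{\underline{m}}=\dim\mathcal{O}_{F}=1$. The main obstacle is really just bookkeeping: making sure the snake-lemma step is valid, which comes down to the surjectivity assertion for the rows of \eqref{lvvd3} established just above the lemma statement. Once that is in place, everything else is a direct appeal to Lemma \ref{lemma_Rm_main} and elementary commutative algebra.
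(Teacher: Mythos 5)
Your proof is correct and takes essentially the same approach as the paper: both specialize Diagram \ref{lvvd3} to compare $\mathcal{R}_{\underline{m}}$ against $\mathcal{O}_F$, read off $\mathcal{O}_F/\mathcal{R}_{\underline{m}}\cong\prod_v \mathcal{O}_v/R_{v,m_v}$ to obtain finite index, and deduce the remaining ring-theoretic properties from there. The only cosmetic differences are that you derive Noetherianness from $\mathcal{R}_{\underline{m}}$ being finitely generated over $\mathbb{Z}$ rather than by invoking $\mathcal{O}_F$ being module-finite over $\mathcal{R}_{\underline{m}}$, and you spell out the freeness of rank $[F:\mathbb{Q}]$, which the paper leaves implicit.
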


\begin{proof}
Harmless. Consider the quotient of the bottom row by the top row in Diagram
\ref{lvvd3} for the choice $\underline{m}=0$ and $\underline{m}^{\prime}$
equal to the $\underline{m}$ in the claim of this lemma. The resulting
downward injections are an isomorphism on $F$ and $\mathbf{A}_{F}$, so
$[\mathcal{O}_{F}:\mathcal{R}_{\underline{m}}]=\prod_{v}[\mathcal{O}%
_{v}:R_{v,m_{v}}]<\infty$, showing that $\mathcal{O}_{F}$ is a finite
$\mathcal{R}_{\underline{m}}$-algebra. It follows that $\mathcal{R}%
_{\underline{m}}$ is a Noetherian one-dimensional ring. For any $x\in F$, we
can write $x=\frac{a}{b}$ with $a,b\in\mathcal{O}_{F}$ and then $x=\frac
{ga}{gb}$ with $ga,gb\in\mathcal{R}_{\underline{m}}$, so $\operatorname*{Frac}%
\mathcal{R}_{\underline{m}}=F$.
\end{proof}

Now fix $\underline{m}$. Let $\underline{\gamma}$ be a further effective Weil
divisor on $\operatorname*{Spec}\mathcal{O}_{F}$. Define%
\[
I_{\underline{\gamma}}:=F\cap\left(
{\textstyle\prod_{v}}
\mathfrak{\tilde{m}}_{v}^{\underline{\gamma}_{v}}\right)  \text{,}%
\]
where at the finite places $\mathfrak{\tilde{m}}_{v}$ refers to the unique
maximal ideal $\mathfrak{\tilde{m}}_{m_{v}}$ of the local ring $R_{v,m_{v}}$,
and we let $\mathfrak{\tilde{m}}_{v}:=F_{v}$ at the infinite places. Since
$\underline{\gamma}$ is a Weil divisor, we have $\mathfrak{\tilde{m}}%
_{v}^{\underline{\gamma}_{v}}=(1)$ for all but finitely many places. It is
clear that $I_{\underline{\gamma}}\subseteq\mathcal{R}_{\underline{m}}$ is an
ideal. We obtain a diagram, similar to Diagram \ref{lvvd3},%
\begin{equation}%
\xymatrix{
0 \ar[r] & I_{\underline{\gamma}} \ar@{>->}[d] \ar[r] & F \oplus\prod_{v}
{\tilde{\mathfrak{m}}}_{v}^{\underline{\gamma}_{v}} \ar@{>->}[d] \ar
[r] & \mathbf{A}_F \ar[r] \ar@{=}[d] & 0 \\
0 \ar[r] & \mathcal{R}_{\underline{m}} \ar[r] \ar@{->>}[d] & F \oplus\prod_{v}
R_{v,m_v} \ar[r]  \ar@{->>}[d] & \mathbf{A}_F \ar[r] & 0 \\
& \mathcal{R}_{\underline{m}}/I_{\underline{\gamma}} \ar[r]^-{\sim}
& \prod_{v} R_{v,m_v}/{\tilde{\mathfrak{m}}}_{v}^{\underline{\gamma}_{v}},
}
\label{lcgx3}%
\end{equation}
where the downward arrows between the first rows are just the inclusions. The
exactness of the middle row holds for the same reason as before, since for any
$c\geq1$ we can also pick $N$ sufficiently divisible such that $N\beta_{v}%
\in\mathfrak{\tilde{m}}_{v}^{c}$. The downward arrows induce isomorphisms on
the summand $F$ as well as on $\mathbf{A}_{F}$. It follows that the bottom
horizontal arrow is indeed an isomorphism. Since $\mathfrak{\tilde{m}}%
_{v}^{\underline{\gamma}_{v}}=(1)$ for all but finitely many places, the
product in the bottom row is over a finite set, and moreover we deduce that
$\mathcal{R}_{\underline{m}}/I_{\underline{\gamma}}$ is a finite ring.

\begin{lemma}
\label{lemma_topcompare}We keep $\underline{m}$ fixed. The inclusion of
$\mathcal{R}_{\underline{m}}$ into the product coming from Equation
\ref{lcqx2} induces an isomorphism of topological rings%
\begin{equation}
\underset{\underline{\gamma}}{\underleftarrow{\lim}}\mathcal{R}_{\underline
{m}}/I_{\underline{\gamma}}\cong\prod_{v}R_{v,m_{v}}\text{,} \label{lcggx2}%
\end{equation}
where the finite rings $\mathcal{R}_{\underline{m}}/I_{\underline{\gamma}}$
are equipped with the discrete topology, $\underline{\gamma}$ runs through all
effective Weil divisors on $\operatorname*{Spec}\mathcal{O}_{F}$ (partially
ordered by $\leq$), the limit is given the inverse limit topology,
$R_{v,m_{v}}$ is taken with its natural subspace topology from Lemma
\ref{lemma_RmIsClopenInF}, and $\prod$ is given the product topology.
Moreover, on the left side the inverse limit agrees with the profinite
completion of $\mathcal{R}_{\underline{m}}$, seen as an abelian group.
\end{lemma}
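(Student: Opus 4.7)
The plan is to reduce the claim to a per-place computation. The bottom isomorphism of Diagram~\ref{lcgx3} gives functorially in $\underline{\gamma}$ the identification
\[
\mathcal{R}_{\underline{m}}/I_{\underline{\gamma}} \;\cong\; \prod_v R_{v,m_v}/\tilde{\mathfrak{m}}_v^{\underline{\gamma}_v},
\]
where only finitely many factors are non-trivial for each fixed $\underline{\gamma}$. The task therefore splits into (a) exchanging the inverse limit over $\underline{\gamma}$ with the product over $v$, (b) identifying each place-wise limit $\underleftarrow{\lim}_k R_{v,m_v}/\tilde{\mathfrak{m}}_v^k$ with $R_{v,m_v}$ itself, and (c) matching the topologies. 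The profinite completion claim then reduces to a cofinality check.

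For (a), I would exhibit an explicit inverse to the obvious map. Given $(x_v)_v \in \prod_v R_{v,m_v}$, send each effective Weil divisor $\underline{\gamma}$ to the tuple $(x_v \bmod \tilde{\mathfrak{m}}_v^{\underline{\gamma}_v})_v$; this is well-defined because only finitely many $\underline{\gamma}_v$ are nonzero, and it is compatible under refinement of $\underline{\gamma}$. Conversely, from a compatible family one reads off per-place residues and assembles a coherent system in each factor separately, which recovers an element of $\prod_v R_{v,m_v}$ once step (b) is in place.

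For (b), Lemma~\ref{lemma_Rm_main} asserts that $R_{v,m_v}$ is compact and Hausdorff in its valuation subspace topology. I would verify that this coincides with the $\tilde{\mathfrak{m}}_v$-adic topology by means of the sandwich
\[
\pi^{k m_v} \mathcal{O}_{F_v} \;\subseteq\; \tilde{\mathfrak{m}}_v^k \;\subseteq\; \pi^{k} \mathcal{O}_{F_v} \cap R_{v,m_v},
\]
where $\pi$ is a uniformizer of $F_v$: the right inclusion is trivial since $\tilde{\mathfrak{m}}_v \subseteq \mathfrak{m}\mathcal{O}_{F_v}$, and the left follows from $\pi^{m_v}\mathcal{O}_{F_v} \subseteq \tilde{\mathfrak{m}}_v$. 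The two neighborhood bases of zero are mutually cofinal, so the topologies coincide, and hence $R_{v,m_v}$ equals its own $\tilde{\mathfrak{m}}_v$-adic completion. For (c), a basic open neighborhood of zero in the inverse limit is the kernel of a projection to some $\mathcal{R}_{\underline{m}}/I_{\underline{\gamma}}$, which under our identification becomes $\prod_{v \in \operatorname{supp}\underline{\gamma}} \tilde{\mathfrak{m}}_v^{\underline{\gamma}_v} \times \prod_{v \notin \operatorname{supp}\underline{\gamma}} R_{v,m_v}$; this is exactly a basic open in the product topology.

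For the profinite completion statement, I must show that the $I_{\underline{\gamma}}$ are cofinal among finite-index subgroups of $\mathcal{R}_{\underline{m}}$. Since $\mathcal{R}_{\underline{m}}$ is a finitely generated free $\mathbb{Z}$-module, every finite-index subgroup $H$ contains $N\mathcal{R}_{\underline{m}}$ for some $N \geq 1$. Under the place decomposition, $N\mathcal{R}_{\underline{m}}$ corresponds to $(N R_{v,m_v})_v$; for each place $v$ lying over a prime divisor of $N$ we have $\tilde{\mathfrak{m}}_v^{\gamma_v} \subseteq N R_{v,m_v}$ once $\gamma_v$ is taken sufficiently large, while at the remaining places $N$ is already a unit in $R_{v,m_v}$, so $\gamma_v = 0$ suffices. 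Only finitely many components need adjusting, so a genuine effective Weil divisor $\underline{\gamma}$ exists with $I_{\underline{\gamma}} \subseteq N\mathcal{R}_{\underline{m}} \subseteq H$. The main obstacle is the single-place topology comparison in step (b), but it reduces to the sandwich above, which is immediate from the explicit description of $\tilde{\mathfrak{m}}_v$ in Lemma~\ref{lemma_Rm_main}; all other steps are diagrammatic.
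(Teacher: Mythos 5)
Your proof is correct and follows essentially the same route as the paper's: levelwise identification via Diagram \ref{lcgx3}, exchange of the inverse limit with the product, the place-wise comparison of the $\tilde{\mathfrak{m}}_v$-adic topology with the valuation topology via a sandwich of powers, and a cofinality check (using that $\mathcal{R}_{\underline{m}}$ is a finitely generated free $\mathbb{Z}$-module, so finite-index subgroups contain $N\mathcal{R}_{\underline{m}}$) for the profinite completion claim. Your sandwich $\pi^{km_v}\mathcal{O}_{F_v}\subseteq\tilde{\mathfrak{m}}_v^k\subseteq\pi^k\mathcal{O}_{F_v}\cap R_{v,m_v}$ makes explicit what the paper states more tersely, and the rest lines up step for step.
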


\begin{proof}
The bottom isomorphism in Diagram \ref{lcgx3} yields isomorphisms%
\begin{equation}
\mathcal{R}_{\underline{m}}/I_{\underline{\gamma}}\cong\prod_{v}R_{v,m_{v}%
}/\mathfrak{\tilde{m}}_{v}^{\underline{\gamma}_{v}} \label{lcggx1}%
\end{equation}
and both sides carry the discrete topology (also on the right, using that
$\mathfrak{\tilde{m}}_{v}^{\underline{\gamma}_{v}}$ is clopen in $R_{v,m_{v}}%
$). Thus, running over the inverse system of effective Weil divisors, we have
levelwise isomorphisms, and thus obtain an isomorphism in the limit. As
$\prod_{v}$ is a limit itself, limits commute with each other and
$\underset{c}{\underleftarrow{\lim}}R_{v,m_{v}}/\mathfrak{\tilde{m}}_{v}%
^{c}=R_{v,m_{v}}$, the first claim follows. Note that the topology on
$R_{v,m_{v}}$ comes from the valuation on $\mathcal{O}_{v}$, which can be
sandwiched with the filtration by powers of $\mathfrak{\tilde{m}}_{v}^{c}$, so
the inverse limit topology agrees with the valuation topology, and
$R_{v,m_{v}}$ is adically complete (it is clopen in $\mathcal{O}_{v}$ by Lemma
\ref{lemma_RmIsClopenInF} and compact subspaces of complete metric spaces are
themselves complete). Finally, to see that this inverse limit agrees with the
profinite completion, we just need to use that each $\mathcal{R}%
_{\underline{m}}/I_{\underline{\gamma}}$ is a finite quotient, and conversely
for any finite quotient $\mathcal{R}_{\underline{m}}/N$ (as an abelian
group!), we can pick $M$ to be the exponent and then the Weil divisor of $M$
on $\mathbb{Z}$ pulls back to a divisor on $\operatorname*{Spec}%
\mathcal{O}_{F}$ such that $I_{\underline{\gamma}}\subseteq(N)\mathcal{O}_{F}%
$. Thus, the $I_{\underline{\gamma}}$ are a cofinal family.
\end{proof}

The ring $\mathcal{R}_{\underline{m}}$ is one-dimensional Noetherian, so every
non-zero prime ideal is maximal. In particular, any two distinct non-zero
prime ideals are automatically coprime. Thus, the Chinese Remainder Theorem
yields isomorphisms $\mathcal{R}_{\underline{m}}/I_{\underline{\gamma}}%
\cong\prod_{\mathcal{I}}\mathcal{R}_{\underline{m}}/\mathcal{I}$, where
$\mathcal{I}$ runs through the minimal primes of the support of $\mathcal{R}%
_{\underline{m}}/I_{\underline{\gamma}}$, viewed as an $\mathcal{R}%
_{\underline{m}}$-module. In the inverse limit, this yields a ring isomorphism%
\begin{equation}
\underset{\underline{\gamma}}{\underleftarrow{\lim}}\mathcal{R}_{\underline
{m}}/I_{\underline{\gamma}}\cong\prod_{\mathcal{P}}F_{\mathcal{P}}%
\text{,}\qquad\text{where}\qquad F_{\mathcal{P}}:=\underset{i}{\underleftarrow
{\lim}}\mathcal{R}_{\underline{m}}/\mathcal{I}_{\mathcal{P},i}\text{,}
\label{lcggx3}%
\end{equation}
where $\mathcal{P}$ runs through the maximal ideals of $\operatorname*{Spec}%
\mathcal{R}_{\underline{m}}$ and $\mathcal{I}_{\mathcal{P},i}$ is a family of
ideals such that the finite modules $\mathcal{R}_{\underline{m}}%
/\mathcal{I}_{\mathcal{P},i}$ have support equal to the point $\mathcal{P}$ in
$\operatorname*{Spec}\mathcal{R}_{\underline{m}}$. Analogous to Lemma
\ref{lemma_topcompare}, the isomorphism in Equation \ref{lcggx3} is seen to be
a homeomorphism.

The product decompositions in Equations \ref{lcggx2} and \ref{lcggx3} are
indexed over different sets: Maximal ideals of $\operatorname*{Spec}%
\mathcal{O}_{F}$ (namely the finite places $v$) versus the maximal ideals
$\mathcal{P}$ of $\operatorname*{Spec}\mathcal{R}_{\underline{m}}$.

\begin{proposition}
\label{prop_Subdecompose}The product decomposition of Equation \ref{lcggx2} is
a refinement of the decomposition in Equation \ref{lcggx3}: The factors
$R_{v,m_{v}}$ do not admit a further non-trivial decomposition as a product
ring, while the $F_{\mathcal{P}}$ split as a finite direct product of rings,
and the factors correspond (in a fashion which can be made canonical) to
finite places $v$ of $\mathcal{O}_{F}$ lying over $\mathcal{P}$.
\end{proposition}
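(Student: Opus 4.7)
The plan is to deduce the refinement claim by comparing primitive idempotents in the two presentations of $\widehat{\mathcal{R}_{\underline{m}}}$. First I would invoke Lemma~\ref{lemma_Rm_main} to note that each $R_{v,m_{v}}$ is a local ring, hence indecomposable as a ring: if $e^{2}=e$ then one of $e$ or $1-e$ lies outside the unique maximal ideal and is a unit, so multiplying $e(1-e)=0$ by its inverse forces $e\in\{0,1\}$. This already establishes the first half of the proposition.

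Consequently, the idempotents of $\prod_{v}R_{v,m_{v}}$ are exactly the characteristic functions of subsets of the set of finite places of $F$, and the primitive ones are the coordinate indicators $e_{v}$. Under the identification $\widehat{\mathcal{R}_{\underline{m}}}\cong\prod_{v}R_{v,m_{v}}\cong\prod_{\mathcal{P}}F_{\mathcal{P}}$, the orthogonal complete family $\{e_{\mathcal{P}}\}$ produced by the second decomposition is thus of the form $e_{\mathcal{P}}=\sum_{v\in S_{\mathcal{P}}}e_{v}$ for a unique partition $\{S_{\mathcal{P}}\}_{\mathcal{P}}$ of the set of finite places; cutting out the $e_{\mathcal{P}}$-summand yields a canonical ring isomorphism
\[
F_{\mathcal{P}}\;\cong\;\prod_{v\in S_{\mathcal{P}}}R_{v,m_{v}}.
\]

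Third, I would identify $S_{\mathcal{P}}$ as the set of finite places of $F$ that lie over $\mathcal{P}$. The preceding lemma gives $\mathcal{O}_{F}$ as a finite $\mathcal{R}_{\underline{m}}$-module, so the normalization $\operatorname{Spec}\mathcal{O}_{F}\to\operatorname{Spec}\mathcal{R}_{\underline{m}}$ is finite with finite fibers, handling the finiteness of $S_{\mathcal{P}}$. For the canonical labeling, the composition $\mathcal{R}_{\underline{m}}\hookrightarrow\widehat{\mathcal{R}_{\underline{m}}}\twoheadrightarrow F_{\mathcal{P}}\twoheadrightarrow R_{v,m_{v}}\twoheadrightarrow\kappa(v)$ has kernel $v\cap\mathcal{R}_{\underline{m}}$, which coincides with $\mathcal{P}$ exactly when $v$ lies over $\mathcal{P}$.

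The main obstacle I anticipate is verifying that each $e_{\mathcal{P}}$ has \emph{finite} support in the $e_{v}$, since an idempotent of an inverse limit could a priori involve infinitely many coordinates. I would reduce this to finite levels using~(\ref{lcggx1}): each $\mathcal{R}_{\underline{m}}/I_{\underline{\gamma}}$ is already a finite product of the local factors $R_{v,m_{v}}/\tilde{\mathfrak{m}}_{v}^{\underline{\gamma}_{v}}$ indexed by the finitely many places $v$ with $\underline{\gamma}_{v}\geq 1$, and the $\mathcal{P}$-primary Chinese Remainder component at each such level picks out exactly the $v$ lying over $\mathcal{P}$; passing to the inverse limit over $\underline{\gamma}$ concludes.
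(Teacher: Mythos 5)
Your argument is correct, and it takes a genuinely different route from the paper's. The paper invokes two black boxes from commutative algebra: Proposition~\ref{prop_NormalizAndCompletion} (the canonical bijection between maximal ideals of the normalization and minimal primes of the completion, for excellent reduced local rings) and Lemma~\ref{lemma_2} (a finite algebra over a complete local ring splits as a product over its maximal ideals), plus lifting of idempotents, with the indecomposability of the $R_{v,m_v}$ deduced from their being domains. You instead argue by hand with idempotents: you first deduce indecomposability from locality (equally valid, since by Lemma~\ref{lemma_Rm_main} each $R_{v,m_v}$ is both a local ring and a domain), observe that all idempotents of $\prod_v R_{v,m_v}$ are coordinate indicators, and then match the two complete orthogonal families by passing to the finite quotients of Equation~\ref{lcggx1} and using CRT there. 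The trade-off is that your approach is more self-contained and elementary, avoiding the excellence hypothesis and the citations, at the cost of slightly more book-keeping; the paper's approach is quicker if one is willing to cite.

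One step is compressed and deserves a sentence: you assert that the kernel of $\mathcal{R}_{\underline m}\to F_{\mathcal P}\twoheadrightarrow R_{v,m_v}\twoheadrightarrow\kappa(v)$ is $\mathfrak p_v\cap\mathcal{R}_{\underline m}$, and that this equals $\mathcal P$ exactly when $v\mid\mathcal P$ --- but the second clause is just the definition of lying over, so as written it does not yet prove that every $v\in S_{\mathcal P}$ lies over $\mathcal P$. The missing observation is that the map $F_{\mathcal P}\to\kappa(v)$ is continuous onto a discrete field, hence its kernel is an open maximal ideal of the $\mathcal P$-adic completion and therefore contains $\mathcal P^n F_{\mathcal P}$ for some $n$; pulling back to $\mathcal{R}_{\underline m}$, the prime $\mathfrak p_v\cap\mathcal{R}_{\underline m}$ contains $\mathcal P^n$, hence contains $\mathcal P$, hence equals $\mathcal P$ by maximality. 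Your finite-level CRT paragraph makes exactly this point implicitly (the $v$-factor of $\mathcal{R}_{\underline m}/I_{\underline\gamma}$ is supported at $\mathfrak p_v\cap\mathcal{R}_{\underline m}$, the $\mathcal P$-primary component is supported at $\mathcal P$, and CRT matches supports), so the idea is there; it just wants to be said explicitly rather than left to ``coincides exactly when.''
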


To prove this, we need to discuss the map%
\[
j_{\underline{m}}\colon\operatorname*{Spec}\mathcal{O}_{F}\longrightarrow
\operatorname*{Spec}\mathcal{R}_{\underline{m}}\text{.}%
\]
This is a finite morphism and by the same argument as in Lemma
\ref{lemma_IsIntegralClosure} it is the normalization map. For all maximal
ideals $\mathfrak{p}$ of $\mathcal{O}_{F}$ coprime to the conductor ideal the
map is a local isomorphism, i.e. $(\mathcal{R}_{\underline{m}})_{\mathfrak{p}%
\cap\mathcal{R}_{\underline{m}}}\overset{\sim}{\longrightarrow}(\mathcal{O}%
_{F})_{\mathfrak{p}}$. For the finitely many maximal ideals not coprime to the
conductor all sorts of phenomena can occur. It might be useful to think of the
geometric analogue of a singular curve and a cusp resp. nodal singularity
exhibiting analogous phenomena. We recall a crucial principle:

\begin{proposition}
[{\cite[Theorem 6.5]{MR0241408}}]\label{prop_NormalizAndCompletion}Suppose
$(R,\mathfrak{m})$ is an excellent reduced Noetherian local ring. Then there
is a canonical bijection between the maximal ideals of the normalization
$R^{\prime}$ and the minimal primes of the completion $\widehat{R}%
_{\mathfrak{m}}$.
\end{proposition}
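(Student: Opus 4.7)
The plan is to compare the two product decompositions of $\widehat{\mathcal{R}}_{\underline{m}}$ by identifying their complete sets of primitive idempotents, and then to match the resulting partition of places with the fibers of the normalization map.

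First, the indecomposability of $R_{v,m_v}$ follows immediately from Lemma \ref{lemma_Rm_main}, which exhibits $R_{v,m_v}$ as a local ring. A local ring admits no non-trivial idempotents: if $e = e^{2} \notin \{0,1\}$, then both $e$ and $1-e$ lie in the maximal ideal, contradicting $e + (1-e) = 1$. This handles the first clause of the proposition.

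For the decomposition of $F_{\mathcal{P}}$, I would combine Equations \ref{lcggx2} and \ref{lcggx3}, which present
\[
\prod_v R_{v,m_v} \;\cong\; \widehat{\mathcal{R}}_{\underline{m}} \;\cong\; \prod_{\mathcal{P}} F_{\mathcal{P}}
\]
as topological rings. Since each $R_{v,m_v}$ is local, the unit elements $e_v := (0, \ldots, 1, \ldots, 0)$ of the factors form a complete set of primitive orthogonal idempotents of $\widehat{\mathcal{R}}_{\underline{m}}$. The decomposition $\prod_{\mathcal{P}} F_{\mathcal{P}}$ must therefore be a coarsening: each primitive $e_v$ falls into a unique factor $F_{\mathcal{P}(v)}$, and gathering the $e_v$ by their image gives a ring isomorphism
\[
F_{\mathcal{P}} \;\cong\; \prod_{v \in V_{\mathcal{P}}} R_{v,m_v}, \qquad V_{\mathcal{P}} := \{v : \mathcal{P}(v) = \mathcal{P}\}.
\]
This is both the refinement claim and the finite direct product description of $F_{\mathcal{P}}$.

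To make the index set $V_{\mathcal{P}}$ canonical, I would identify it with the set of finite places $v$ of $\mathcal{O}_F$ lying over $\mathcal{P}$ under the normalization $\mathcal{R}_{\underline{m}} \hookrightarrow \mathcal{O}_F$ (established in the paragraph preceding this proposition). The argument is a maximal-ideal chase around
\[
\xymatrix{
\mathcal{R}_{\underline{m}} \ar@{^{(}->}[r] \ar@{^{(}->}[d] & \mathcal{O}_F \ar@{^{(}->}[d] \\
\prod_v R_{v,m_v} \ar@{^{(}->}[r] & \prod_v \mathcal{O}_v,
}
\]
pulling the unique maximal ideal $\tilde{\mathfrak{m}}_v$ of $R_{v,m_v}$ back through the left column to obtain $\mathcal{P}(v)$ and through the top-right composition to obtain $v \cap \mathcal{R}_{\underline{m}}$; commutativity then forces $\mathcal{P}(v) = v \cap \mathcal{R}_{\underline{m}}$. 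As a cross-check, Proposition \ref{prop_NormalizAndCompletion} applied to the excellent reduced local ring $(\mathcal{R}_{\underline{m}})_{\mathcal{P}}$ canonically matches the minimal primes of $F_{\mathcal{P}}$ with the maximal ideals of $\mathcal{O}_F$ above $\mathcal{P}$, and these in turn match the factor indexing of the product. The main delicate point will be precisely this coherence between the abstract idempotent-theoretic bijection $v \mapsto \mathcal{P}(v)$ and the geometric one from the normalization; the square above is designed to settle it directly.
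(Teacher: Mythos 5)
Your proposal does not prove the stated proposition. Proposition \ref{prop_NormalizAndCompletion} is a general statement of commutative algebra about an arbitrary excellent reduced Noetherian local ring $(R,\mathfrak{m})$: a canonical bijection between the maximal ideals of the normalization $R'$ and the minimal primes of $\widehat{R}_{\mathfrak{m}}$. Nothing in it refers to the orders $\mathcal{R}_{\underline{m}}$, the places $v$, or the product decompositions of Equations \ref{lcggx2} and \ref{lcggx3}; the paper does not prove it at all but imports it from \cite{MR0241408} as a black box (``We will not explain the precise construction of the bijection here''). What you have written is instead an argument for Proposition \ref{prop_Subdecompose}, the refinement statement about the two adelic product decompositions --- and it even invokes Proposition \ref{prop_NormalizAndCompletion} itself as an ingredient (``as a cross-check, Proposition \ref{prop_NormalizAndCompletion} applied to $(\mathcal{R}_{\underline{m}})_{\mathcal{P}}$ \ldots''), which would be circular if offered as a proof of that proposition. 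A genuine proof would have to run roughly as follows: excellence makes $R'$ module-finite over $R$, so $R'\otimes_{R}\widehat{R}\cong\widehat{R'}$ splits as a finite product of complete local rings indexed by the maximal ideals of $R'$; excellence again guarantees that normality passes to these completions, so each factor is a domain with a unique minimal prime; and $\widehat{R}\rightarrow\widehat{R'}$ is finite, injective, and identifies $\widehat{R'}$ with the normalization of the reduced ring $\widehat{R}$, hence induces a bijection on minimal primes. None of these steps appears in your text.

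Read instead as a proof of Proposition \ref{prop_Subdecompose}, your idempotent-matching strategy is reasonable and close in spirit to the paper's argument (which combines Proposition \ref{prop_NormalizAndCompletion} with Lemma \ref{lemma_2} and lifting of idempotents), and the locality of $R_{v,m_{v}}$ from Lemma \ref{lemma_Rm_main} does correctly yield indecomposability. But even there a gap remains: the products in Equations \ref{lcggx2} and \ref{lcggx3} are infinite, so the idempotents $e_{v}$ do not sum to $1$ and do not form a ``complete'' set in the naive sense; you must still argue (topologically, or via supports as the paper does) that the identity of each $F_{\mathcal{P}}$ is a \emph{finite} sum of the $e_{v}$ before you can conclude $F_{\mathcal{P}}\cong\prod_{v\in V_{\mathcal{P}}}R_{v,m_{v}}$.
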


We will not explain the precise construction of the bijection here. We also recall:

\begin{lemma}
[{\cite[Proposition 4.3.2]{MR2266432}}]\label{lemma_2}Suppose $(R,\mathfrak{m}%
)$ is a complete local Noetherian ring and $S$ a finite $R$-algebra. Then $S$
is semilocal and there is an isomorphism of rings $S\cong\prod S_{\mathfrak{q}%
}$, where $\mathfrak{q}$ runs through the maximal ideals of $S$.
\end{lemma}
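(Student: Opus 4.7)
\emph{Plan.} The approach is to combine the two product descriptions
\[
\textstyle\prod_v R_{v,m_v} \;\cong\; \widehat{\mathcal{R}_{\underline{m}}} \;\cong\; \prod_{\mathcal{P}} F_{\mathcal{P}}
\]
of $\widehat{\mathcal{R}_{\underline{m}}}$ given by Equations \ref{lcggx2} and \ref{lcggx3}, and to show that the first refines the second by locating each factor $R_{v,m_v}$ inside a unique $F_{\mathcal{P}}$. The indecomposability of $R_{v,m_v}$ drops out of Lemma \ref{lemma_Rm_main}: a local ring has no idempotents other than $0$ and $1$, so it admits no nontrivial product decomposition.

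For the refinement, I would fix a finite place $v$ of $\mathcal{O}_F$ and set $\mathcal{P}_v := v \cap \mathcal{R}_{\underline{m}}$, which is a maximal ideal of $\mathcal{R}_{\underline{m}}$. The projection $\mathrm{pr}_v\colon\widehat{\mathcal{R}_{\underline{m}}}\twoheadrightarrow R_{v,m_v}$ restricts on $\mathcal{R}_{\underline{m}}$ to the tautological inclusion $\mathcal{R}_{\underline{m}}\hookrightarrow R_{v,m_v}$. Every element of $\mathcal{R}_{\underline{m}}\setminus\mathcal{P}_v$ becomes a unit in $R_{v,m_v}$, so this factors through the localization $(\mathcal{R}_{\underline{m}})_{\mathcal{P}_v}$; and by the adic completeness of $R_{v,m_v}$ (Lemma \ref{lemma_topcompare}) it extends through the completion $F_{\mathcal{P}_v}$. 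Hence $\mathrm{pr}_v$ factors as $\widehat{\mathcal{R}_{\underline{m}}} \twoheadrightarrow F_{\mathcal{P}_v} \to R_{v,m_v}$. For $\mathcal{P}\neq\mathcal{P}_v$, the composite $F_{\mathcal{P}} \hookrightarrow \widehat{\mathcal{R}_{\underline{m}}} \twoheadrightarrow F_{\mathcal{P}_v} \to R_{v,m_v}$ is therefore zero. Collecting over $v \in V(\mathcal{P}) := \{v : \mathcal{P}_v = \mathcal{P}\}$ (a finite set, since $\mathcal{O}_F$ is a finite $\mathcal{R}_{\underline{m}}$-module) and comparing both sides of $\widehat{\mathcal{R}_{\underline{m}}} \cong \prod_v R_{v,m_v}$ then forces $F_{\mathcal{P}} \cong \prod_{v \in V(\mathcal{P})} R_{v,m_v}$.

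\emph{Main obstacle.} The delicate step is establishing that $\mathrm{pr}_v$ factors through precisely $F_{\mathcal{P}_v}$ and vanishes on the other $F_{\mathcal{P}}$'s; once in hand, the remaining comparison of the two product decompositions is formal. As a sanity check, one can invoke Proposition \ref{prop_NormalizAndCompletion} applied to the excellent reduced local ring $(\mathcal{R}_{\underline{m}})_{\mathcal{P}}$: it provides a canonical bijection between the maximal ideals of the normalization $\mathcal{O}_F$ above $\mathcal{P}$ and the minimal primes of the completion $F_{\mathcal{P}}$. Since each $R_{v,m_v}$ is a domain, the minimal primes of $\prod_{v \in V(\mathcal{P})} R_{v,m_v}$ index its factors, confirming the canonicity of the correspondence $V(\mathcal{P}) \leftrightarrow \{R_{v,m_v}\text{-factors of }F_{\mathcal{P}}\}$.
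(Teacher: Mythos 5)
You have proved the wrong statement. Lemma \ref{lemma_2} is a general piece of commutative algebra about an \emph{arbitrary} complete local Noetherian ring $(R,\mathfrak{m})$ and an \emph{arbitrary} finite $R$-algebra $S$: it asserts that $S$ is semilocal and splits as the product of its localizations at maximal ideals. The paper does not prove it at all; it is quoted verbatim from Liu, Proposition 4.3.2. Your proposal never mentions $R$ or $S$ in this generality. Instead it works entirely with the specific rings $\mathcal{R}_{\underline{m}}$, $R_{v,m_v}$ and $F_{\mathcal{P}}$ and argues that the decomposition of Equation \ref{lcggx2} refines that of Equation \ref{lcggx3} --- which is the content of Proposition \ref{prop_Subdecompose}, i.e.\ the place where Lemma \ref{lemma_2} gets \emph{applied}. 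So even if every step of your sketch were airtight, it would establish an instance (or rather a consequence) of the lemma in one particular situation, not the lemma itself; and the ``delicate step'' you flag --- that $\mathrm{pr}_v$ factors through exactly one completed factor and kills the others --- is essentially the product decomposition you are supposed to be proving, so the argument is circular as a proof of Lemma \ref{lemma_2}.

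For the record, the actual proof is short and has nothing to do with ad\`eles or normalizations. Since $S$ is finite over $R$, every maximal ideal $\mathfrak{q}$ of $S$ lies over $\mathfrak{m}$ (lying over for integral extensions), so the maximal ideals of $S$ biject with those of $S/\mathfrak{m}S$; the latter is a finite-dimensional algebra over the field $R/\mathfrak{m}$, hence Artinian with finitely many maximal ideals, whence $S$ is semilocal and $\mathfrak{m}S$ is contained in its Jacobson radical. Because $R$ is complete and $S$ is a finite $R$-module, $S$ is $\mathfrak{m}$-adically complete, i.e.\ $S\cong\varprojlim_n S/\mathfrak{m}^nS$. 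Each $S/\mathfrak{m}^nS$ is Artinian and splits by the Chinese Remainder Theorem as a product of local rings indexed by the $\mathfrak{q}$'s; equivalently, the idempotents realizing the splitting of $S/\mathfrak{m}S$ lift compatibly up the tower (completeness permits lifting idempotents along the nilpotent extensions $S/\mathfrak{m}^{n+1}S\to S/\mathfrak{m}^nS$). Passing to the limit gives orthogonal idempotents $e_{\mathfrak{q}}\in S$ summing to $1$ with $e_{\mathfrak{q}}S\cong S_{\mathfrak{q}}$, which is the asserted decomposition $S\cong\prod_{\mathfrak{q}}S_{\mathfrak{q}}$.
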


\begin{proof}
[Proof of Prop. \ref{prop_Subdecompose}]As $j_{\underline{m}}$ is the
normalization, so is the base change to any local ring $j_{\underline{m}%
}\otimes_{\mathcal{R}_{\underline{m}}}\mathcal{R}_{\underline{m},\mathcal{P}}$
for a maximal ideal $\mathcal{P}$ of $\mathcal{R}_{\underline{m}}$. Then
Proposition \ref{prop_NormalizAndCompletion} applies. The completion side,
i.e. $\widehat{(\mathcal{R}_{\underline{m},\mathcal{P}})}_{\mathcal{P}}$, then
corresponds to a factor $F_{\mathcal{P}}=\underset{i}{\underleftarrow{\lim}%
}\mathcal{R}_{\underline{m}}/\mathcal{I}_{\mathcal{P},i}$ in Equation
\ref{lcggx3}. The proposition therefore identifies the minimal primes of
$F_{\mathcal{P}}$ with the maximal ideals of the normalization. However, the
latter exactly correspond to the points in the fiber of the normalization map
$j_{\underline{m}}$ over the point $\mathcal{P}\in\operatorname*{Spec}%
\mathcal{R}_{\underline{m}}$. As $F_{\mathcal{P}}$ is a finite $\widehat
{(\mathcal{R}_{\underline{m},\mathcal{P}})}_{\mathcal{P}}$-algebra, Lemma
\ref{lemma_2} yields a further decomposition. By lifting of idempotents, the
factors correspond to those of $S/\mathfrak{m}S$ in the cited lemma, in
particular each maximal ideal in Lemma \ref{lemma_2} correponds to a unique
minimal ideal of Proposition \ref{prop_NormalizAndCompletion}. The factors
$R_{v,m_{v}}$ are domains, so they do not permit non-trivial decompositions as
a product of rings.
\end{proof}

\begin{definition}
[Global optimal order]\label{def_GlobalOptimalOrder}Suppose $v$ is a finite
place. Write $\mathcal{R}_{v}\subseteq\mathcal{O}_{v}$ for the optimal order
according to Definiton \ref{def_LocalOptimalOrder}. For infinite places $v$,
define $\mathcal{R}_{v}:=\mathcal{O}_{v}=F_{v}$. We define%
\[
\mathcal{R}:=F\cap\left(
{\textstyle\prod_{v}}
\mathcal{R}_{v}\right)  \text{.}%
\]
We call $\mathcal{R}$ the \emph{optimal order} in $F$.
\end{definition}

Of course $\mathcal{R}$ is an order of the type $\mathcal{R}_{\underline{m}}$
for $\underline{m}$ suitably chosen.

\subsection{$K$-theory computations}

We shall imitate Gillet's proof of Weil reciprocity, but with the twist of
exploiting the special feature that our order, while typically not regular, is
sufficiently small to make the wild part of the Hilbert symbol visible in the
boundary maps.

As before, we refer to Appendix \S \ref{sect_appendix_Kthy} for the necessary
background on Algebraic $K$-theory. Let $I=(N)$ be the principal ideal
generated by some squarefree natural number $N\geq1$. As a special instance of
Equation \ref{lvtx1} we get the localization sequence%
\begin{equation}
K_{I}(\mathcal{R})\longrightarrow K(\mathcal{R})\longrightarrow K(\mathcal{R}%
[\frac{1}{N}])\text{.} \label{lcimde7a}%
\end{equation}
We may factor $I=(p_{1}p_{2}\cdots p_{r})$ into prime numbers, so that each
pair $(p_{i})+(p_{j})=1$ is coprime for $i\neq j$. Thus, relying on Equation
\ref{lvtx2}, we get%
\[
\bigoplus K_{(p_{i})}(\mathcal{R})\longrightarrow K(\mathcal{R}%
)\longrightarrow K(\mathcal{R}[\frac{1}{N}])\text{.}%
\]
These sequences sit in a family for $N\mid N^{\prime}$. As $K$-theory commutes
with filtering colimits, we obtain the fiber sequence%
\[
\bigoplus_{p}K_{(p)}(\mathcal{R})\longrightarrow K(\mathcal{R})\longrightarrow
K(F)\text{,}%
\]
where the sum runs over all prime numbers and we have used that
$F=\operatorname*{colim}_{N}\mathcal{R}[\frac{1}{N}]$ (since $F=\mathbb{Q}%
\cdot\mathcal{R}$). For the next step, we invoke the equivalence of Equation
\ref{lvtx3}, showing that $K_{(p)}(\mathcal{R})$ agrees with $K_{(p)}%
(\widehat{\mathcal{R}}_{p})$ of the completion. But $\widehat{\mathcal{R}}%
_{p}$ decomposes at the product over the completion of $\mathcal{R}$ over all
finite places $v$ over $p$ by Proposition \ref{prop_Subdecompose}, and
combined with Lemma \ref{lemma_topcompare} this yields
\[
K_{(p)}(\mathcal{R})\cong\coprod_{v\mid p}K_{\mathfrak{\tilde{m}}_{v}%
}(\mathcal{R}_{v})\text{,}%
\]
where $v$ runs through the finite places of $F$ lying over the prime number
$p$, and $\mathfrak{\tilde{m}}_{v}$ refers to the unique maximal ideal of
$\mathcal{R}_{v}$. Plugging this into the previous fiber sequence, we arrive
at the fiber sequence%
\begin{equation}
\bigoplus_{v}K_{\mathfrak{\tilde{m}}_{v}}(\mathcal{R}_{v})\longrightarrow
K(\mathcal{R})\longrightarrow K(F)\text{.} \label{lvva1}%
\end{equation}

\begin{remark}
Instead of our somewhat peculiar choice of natural numbers $N$ in Equation
\ref{lcimde7a} we could also have taken any zero-dimensional ideal $I$ in
$\mathcal{R}$, the fiber sequence%
\[
K_{I}(\mathcal{R})\longrightarrow K(\mathcal{R})\longrightarrow
K(\operatorname*{Spec}\mathcal{R}-\operatorname*{Spec}\mathcal{R}/I)
\]
and then taken the filtering colimit over all such ideals partially ordered by
inclusion. This yields the same colimit in Equation \ref{lvva1} (since the
family of ideals $N$ with $N\in\mathbb{Z}_{\geq1}$ is a cofinal subsystem).
\end{remark}

Now consider the long exact sequence of homotopy groups attached to Equation
\ref{lvva1}. Around $\pi_{1}$ and $\pi_{2}$ it specializes to%
\begin{align*}
&  K_{2}(\mathcal{R})\longrightarrow K_{2}(F)\longrightarrow\bigoplus
_{v}K_{\mathfrak{\tilde{m}}_{v},1}(\mathcal{R}_{v})\longrightarrow
K_{1}(\mathcal{R})\overset{\iota}{\longrightarrow}\cdots\\
&  \qquad\qquad\cdots\overset{\iota}{\longrightarrow}K_{1}(F)\longrightarrow
\bigoplus_{v}K_{\mathfrak{\tilde{m}}_{v},0}(\mathcal{R}_{v})\longrightarrow
K_{0}(\mathcal{R})\longrightarrow K_{0}(F)\longrightarrow\cdots\text{.}%
\end{align*}
Note that since we use non-connective $K$-theory, we do not per se know that
the sequence terminates in three $\pi_{0}$-groups, there might well be
negative homotopy groups. This need not concern us however. The middle part at
the arrow $\iota$ is easy to analyze. We recall the definition of $SK_{1}$:%
\[
SK_{1}(R):=\ker\left(  \det:K_{1}(R)\longrightarrow R^{\times}\right)
\text{.}%
\]
The arrow $\iota$ thus induces a commutative diagram%
\[%
\xymatrix{
1 \ar[r] & SK_1(\mathcal{R}) \ar[r] \ar[d] & K_1(\mathcal{R}) \ar
[r]^-{\operatorname{det}} \ar[d] & {\mathcal{R}}^{\times} \ar[r] \ar[d] & 1 \\
1 \ar[r] & SK_1(F) \ar[r] & K_1(F) \ar[r]_-{\operatorname{det}} & {F}^{\times}
\ar[r] & 1.
}%
\]
The horizontal surjections on the right are split. As $F$ is a field, we have
$SK_{1}(F)=1$. We deduce that the sequence is spliced from two exact sequences%
\begin{equation}
0\longrightarrow\mathcal{R}^{\times}\longrightarrow K_{1}(F)\longrightarrow
\bigoplus_{v}K_{\mathfrak{\tilde{m}}_{v},0}(\mathcal{R}_{v})\longrightarrow
K_{0}(\mathcal{R})\longrightarrow K_{0}(F)\longrightarrow\cdots\label{lt1}%
\end{equation}
and%
\begin{equation}
\cdots\longrightarrow K_{2}(\mathcal{R})\longrightarrow K_{2}%
(F)\longrightarrow\bigoplus_{v}K_{\mathfrak{\tilde{m}},1}(\mathcal{R}%
_{v})\longrightarrow SK_{1}(\mathcal{R})\longrightarrow1\text{.} \label{lt2}%
\end{equation}
Although interesting in its own right, we have no immediate use for Equation
\ref{lt1}. Equation \ref{lt2} yields%
\begin{equation}
\cdots\longrightarrow K_{2}(\mathcal{R})\longrightarrow K_{2}(F)\overset
{\partial}{\longrightarrow}\bigoplus_{v}K_{\mathfrak{\tilde{m}},1}%
(\mathcal{R}_{v})\longrightarrow SK_{1}(\mathcal{R})\longrightarrow1\text{.}
\label{lzw3}%
\end{equation}
From Theorem \ref{thm_LocalMain} we know that the boundary map $\partial$ on
each summand $v$ agrees canonically with the full Hilbert symbol at the place
$v$.

Below, we shall use the term \textquotedblleft up to $2$-primary
torsion\textquotedblright. It means that we work in the abelian category%
\begin{equation}
Q:=\frac{\operatorname{AbelianGroups}}{2\text{-}\operatorname*{primary}%
\text{-}\operatorname*{torsion}}\text{.} \label{lcaa1}%
\end{equation}
To set this up in detail note that the $2$-primary torsion abelian groups form
a Serre subcategory in all abelian groups, so the above quotient exists and is
an abelian category itself. A complex of abelian groups becomes exact in this
quotient category if and only if its homology is made of $2$-primary torsion
groups. For example, a map $A\rightarrow B$ is a monomorphism in the quotient
category if and only if its kernel is $2$-primary torsion.

As an alternative characterization, a complex of abelian groups becomes exact
in $Q$ if and only if becomes exact when tensoring it with the flat
$\mathbb{Z}$-module $\mathbb{Z}\left[  \frac{1}{2}\right]  $. In fact, the
category $Q$ is equivalent to the category of $\mathbb{Z}\left[  \frac{1}%
{2}\right]  $-modules.

\begin{theorem}
\label{thm_GlobalMain}Suppose $F$ is any number field. Suppose $\mathcal{R}$
denotes the optimal order. Then, in abelian groups up to $2$-primary torsion,
the diagram%
\[%
\xymatrix{
K_{2}(\mathcal{R}) \ar[r] & K_{2}(F) \ar[r]^-{\partial} \ar@{=}[d] & \bigoplus
_{v \enspace\operatorname{finite}}K_{\mathfrak{\tilde{m}},1}(\mathcal{R}%
_{v}) \ar[d]^{\oplus_v \phi}
\ar[r] & SK_{1}(\mathcal{R}) \ar[r] \ar[d]_{\tau} & 0 \\
& K_{2}(F) \ar[r]_-{h_v} & \bigoplus_{v \enspace\operatorname{noncomplex}}
\mu(F_v) \ar[r]_-{\cdot\frac{m_v}{m}}
& \mu(F) \ar[r] & 0,
}%
\]
commutes, has exact rows, and the downward arrows are isomorphisms. In
particular, the Hilbert reciprocity law gets identified with a sequence coming
directly from a $K$-theory localization sequence (up to $2$-primary torsion).
We deduce%
\[
SK_{1}(\mathcal{R})\cong\mu(F)\text{,}%
\]
up to $2$-primary torsion.
\end{theorem}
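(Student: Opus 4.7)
The plan is to assemble the diagram piece by piece. The top row has essentially already been produced: Equation \ref{lzw3} is exactly the claimed exact sequence
\[
K_{2}(\mathcal{R})\longrightarrow K_{2}(F)\overset{\partial}{\longrightarrow}\bigoplus_{v\text{ finite}}K_{\mathfrak{\tilde{m}},1}(\mathcal{R}_{v})\longrightarrow SK_{1}(\mathcal{R})\longrightarrow 0,
\]
obtained by taking the localization sequence (\ref{lvva1}) of the optimal order $\mathcal{R}$ and splitting off $\mathcal{R}^{\times}$ from $K_{1}(\mathcal{R})$ via the determinant. The middle vertical arrow is built from Theorem \ref{thm_LocalMain}(4): at every finite place $v$ we have a canonical isomorphism $\phi_{v}\colon K_{\mathfrak{\tilde{m}},1}(\mathcal{R}_{v})\xrightarrow{\sim}\mu(F_{v})$ (for odd residue characteristic) or, at $p=2$, the tame symbol into the prime-to-$p$ part. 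Take the direct sum.

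Next I would verify commutativity of the middle square. This is the heart of the matter, and amounts to showing that the global boundary $\partial$ is, summand by summand, the local boundary map $\partial_{v}$. This compatibility was essentially built into the derivation of (\ref{lvva1}): the identification $K_{(p)}(\mathcal{R})\cong \coprod_{v\mid p} K_{\mathfrak{\tilde{m}}_{v}}(\mathcal{R}_{v})$ invokes Proposition \ref{prop_Subdecompose} together with Lemma \ref{lemma_topcompare} and the excision/completion invariance of $K$-theory on supports. Once this is granted, the middle square commutes by Theorem \ref{thm_LocalMain}(4) applied at each $v$, which says that $\phi_{v}\circ\partial_{v}=h_{v}$.

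The right vertical arrow is then forced on us: by the Moore theorem (Theorem \ref{thm_MooreSequence}) the bottom row is exact, so the composite $SK_{1}(\mathcal{R})\leftarrow \bigoplus K_{\mathfrak{\tilde{m}},1}(\mathcal{R}_{v})\xrightarrow{\oplus\phi_{v}} \bigoplus\mu(F_{v})\xrightarrow{\cdot m_{v}/m}\mu(F)$ factors through the cokernel of $\partial$, which is $SK_{1}(\mathcal{R})$. This yields a well-defined map $SK_{1}(\mathcal{R})\to\mu(F)$ making the right square commute. Exactness of both rows, together with the identity on $K_{2}(F)$ on the left, then gives the whole commutative diagram.

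For the isomorphism claim after inverting $2$: at odd finite places $\phi_{v}$ is already an isomorphism. At places above $p=2$ and at real places $v$ the group $\mu(F_{v})$ is $2$-primary (since $F_{v}\in\{\mathbb{R}\}\cup\{\text{finite extensions of }\mathbb{Q}_{2}\}$ and in the latter case the only wild roots of unity are $2$-power), so those contributions vanish after $\otimes\mathbb{Z}[\tfrac{1}{2}]$ on the bottom row, while the top row has no real contribution at all. Thus after inverting $2$ the middle arrow is an isomorphism, the left vertical identity and the middle vertical isomorphism match, and the five lemma forces the right vertical arrow to be an isomorphism as well. Exactness then gives $SK_{1}(\mathcal{R})[\tfrac{1}{2}]\cong\mu(F)[\tfrac{1}{2}]$.

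The main obstacle I anticipate is verifying the middle square carefully: namely that the global boundary $\partial$ in (\ref{lzw3}) really does split as $\bigoplus_{v}\partial_{v}$ with each $\partial_{v}$ the local boundary appearing in Theorem \ref{thm_LocalMain}. The issue is that $\mathcal{R}$ is not regular, so d\'evissage is unavailable and one cannot collapse the support $K$-theory to $K$-theory of a residue field. Instead the argument must route through the decomposition of the completion $\widehat{\mathcal{R}}_{p}$ into the factors $R_{v,m_{v}}$ (Proposition \ref{prop_Subdecompose}) and excision for $K$-theory on supports along $(p)$. Everything else is either an application of Theorem \ref{thm_LocalMain}, the exact Moore sequence, or a formal diagram chase.
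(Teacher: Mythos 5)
Your proposal follows the same overall architecture as the paper's proof (assemble the top row from Equation \ref{lzw3}, match $\partial$ to the local Hilbert symbol via Theorem \ref{thm_LocalMain}, induce the right arrow by the universal property of the cokernel, conclude with a diagram chase). But there is a genuine error in the step where you handle the places over $p=2$.

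You claim that for $v\mid 2$ the group $\mu(F_v)$ is $2$-primary, and hence vanishes after tensoring with $\mathbb{Z}[\tfrac12]$. This is false. For a finite extension $F_v/\mathbb{Q}_2$ with residue field $\mathbb{F}_q$ (so $q=2^f$), one has
\[
\mu(F_v)\ \cong\ \kappa(v)^\times\ \oplus\ \mu(F_v)[2^\infty]\ \cong\ \mathbb{Z}/(q-1)\ \oplus\ \mu(F_v)[2^\infty],
\]
via the Teichm\"uller lift. Only the second summand is $2$-primary; the first summand $\mathbb{Z}/(q-1)$ has odd order and survives entirely after inverting $2$ (for instance $q=8$ gives a $\mathbb{Z}/7$ factor). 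The parenthetical ``the only \emph{wild} roots of unity are $2$-power'' is correct, but it concerns the $2$-primary summand, not all of $\mu(F_v)$. Correspondingly, the top-row summand at such a $v$ also does not vanish: since $\mathcal{R}_v=\mathcal{O}_v$ by Definition \ref{def_LocalOptimalOrder}, devissage gives $K_{\tilde{\mathfrak{m}},1}(\mathcal{R}_v)\cong\kappa(v)^\times$, again an odd group.

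What is actually happening at $v\mid 2$ (and what the paper uses) is that the vertical map $\phi_v$ is the composite of this devissage isomorphism with the Teichm\"uller inclusion $\kappa(v)^\times\hookrightarrow\mu(F_v)$. Its cokernel is exactly $\mu(F_v)[2^\infty]$, so $\phi_v$ becomes an isomorphism after inverting $2$ — not because either side is zero, but because the only missing piece is $2$-primary. The real places are the ones where the top row genuinely has no summand and the bottom-row summand $\{\pm1\}$ does vanish after inverting $2$. With this correction, the rest of your argument (commutativity via Theorem \ref{thm_LocalMain}(4) and Proposition \ref{prop_Subdecompose}, exactness of both rows, the five lemma or snake lemma to conclude $SK_1(\mathcal{R})[\tfrac12]\cong\mu(F)[\tfrac12]$) goes through and coincides with the paper's proof.
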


Recall that it is known that $SK_{1}(\mathcal{O}_{F})=0$ by the work of
Bass--Milnor--Serre \cite{MR244257}.

\begin{proof}
The exactness of the top row comes from the localization sequence. The bottom
row is the Moore sequence (Theorem \ref{thm_MooreSequence}). We employ Theorem
\ref{thm_LocalMain} for all finite places over odd primes, showing that each
such $\phi$ is an isomorphism. Finite places over $p=2$ have the full (local)
ring of integers as their local optimal order. Thus, at these the boundary map
is just the tame symbol and not the honest Hilbert symbol. But the tame symbol
misses only the $2$-primary torsion summand of $\mu(F_{v})$ as we are over
$p=2$, which disappears after inverting two. Complex places play no role in
the Hilbert reciprocity law. Real places contribute factors $\{\pm1\}$, which
also disappear by inverting two. Thus, even though in principle the sum of the
bottom row should also include real places, it makes no difference to consider
only the finite places here as well. The snake lemma implies the claim about
$SK_{1}$.
\end{proof}

\begin{corollary}
Let $F$ be a number field with $\sqrt{-1}\notin F$. Then the statement of
Hilbert reciprocity up to sign, i.e.%
\[
\prod_{v\text{ }\operatorname*{noncomplex}}h_{v}(\alpha,\beta)^{\frac{m_{v}%
}{m}}=\pm1\qquad\text{for all}\qquad\alpha,\beta\in F^{\times}\text{,}%
\]
can be phrased as the property of being a complex ($d^{2}=0$) for a
localization sequence in $K$-theory.
\end{corollary}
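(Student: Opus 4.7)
The plan is to deduce Hilbert reciprocity up to sign by transferring the complex property of the top row of the diagram in Theorem \ref{thm_GlobalMain} down to the bottom row, using the hypothesis $\sqrt{-1}\notin F$ to absorb residual $2$-torsion into a sign. Let $\alpha\colon\bigoplus_{v\text{ finite}}K_{\mathfrak{\tilde{m}},1}(\mathcal{R}_{v})\to SK_{1}(\mathcal{R})$ be the third horizontal map of the top row, $\beta\colon SK_{1}(\mathcal{R})\to\mu(F)$ the rightmost vertical arrow, and $\gamma$ the bottom map $\cdot\tfrac{m_{v}}{m}$. The single piece of abstract input is that the top row, being a $K$-theory localization sequence (Equation \ref{lzw3}), is exact and in particular a complex: $\alpha\circ\partial=0$.

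Chasing the right-hand square, which commutes, yields $\gamma\circ(\oplus_{v}\phi)=\beta\circ\alpha$, so precomposing with $\partial$ gives
\[
\gamma\circ(\oplus_{v}\phi)\circ\partial=\beta\circ\alpha\circ\partial=0.
\]
I would then compare the bottom-row composition $\gamma\circ h$ (coming from the Moore sequence) with this zero map place by place. At each finite place $v$ over an odd prime, Theorem \ref{thm_LocalMain}(4) yields $\phi_{v}\circ\partial=h_{v}$ on the nose. At finite places over $p=2$ we have $\mathcal{R}_{v}=\mathcal{O}_{v}$, so $\partial$ is just the classical tame symbol, and Lemma \ref{lemma_HilbertBecomesTameSymbol} identifies $\phi_{v}\circ\partial$ with the projection of $h_{v}$ onto the prime-to-$2$ summand of $\mu(F_{v})$; the defect $h_{v}-\phi_{v}\circ\partial$ therefore lies in $\mu(F_{v})[2^{\infty}]$. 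Real places appear only in the bottom row, and there $h_{v}$ takes values in $\{\pm 1\}\subseteq\mu(F_{v})[2^{\infty}]$.

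Assembling these observations, $\gamma\circ h$ differs from $\gamma\circ(\oplus_{v}\phi)\circ\partial=0$ by an element of $\gamma\bigl(\bigoplus_{v}\mu(F_{v})[2^{\infty}]\bigr)\subseteq\mu(F)[2^{\infty}]$, using that the componentwise power map $\gamma$ sends $2$-primary torsion to $2$-primary torsion. The arithmetic hypothesis $\sqrt{-1}\notin F$ forces $\mu(F)[2^{\infty}]=\{\pm 1\}$, so $\gamma\circ h$ takes values in $\{\pm 1\}$, which unwinds precisely to $\prod_{v\text{ noncomplex}}h_{v}(\alpha,\beta)^{m_{v}/m}=\pm 1$ for all $\alpha,\beta\in F^{\times}$. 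The main obstacle is the bookkeeping at the $p=2$ places and the real places: one must verify that the total commutativity defect of the diagram before inverting $2$ is $2$-primary in $\bigoplus_{v}\mu(F_{v})$, so that after projection by $\gamma$ the $\sqrt{-1}\notin F$ hypothesis collapses it to a sign. Once that verification is secured, the identity $d^{2}=0$ built into the $K$-theory localization sequence yields Hilbert reciprocity up to sign by purely formal diagram chasing.
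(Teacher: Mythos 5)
Your argument follows the paper's strategy: use the exactness of the $K$-theory localization (top row of Theorem \ref{thm_GlobalMain}), the comparison diagram, and the arithmetic input $\mu(F)[2^{\infty}]=\{\pm 1\}$ from $\sqrt{-1}\notin F$. The paper's own proof is just a one-line reading-off from Theorem \ref{thm_GlobalMain} that the product vanishes in $\mu(F)\left[\tfrac{1}{2}\right]$; you unpack the diagram chase that this hides.

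There is, however, a genuine imprecision in your $p=2$ bookkeeping. Lemma \ref{lemma_HilbertBecomesTameSymbol} does \emph{not} identify $\phi_v\circ\partial$ with the projection of $h_v$ onto the prime-to-$2$ summand of $\mu(F_v)$; it says $\partial_v=(2^{k_v}\text{-th power})\circ h_v$ followed by reduction to $\kappa_v^{\times}$. So if $\phi_v$ is the Teichm\"uller lift, then $\phi_v\circ\partial_v=h_v^{2^{k_v}}$ inside $\mu(F_v)$, and the defect $h_v\cdot(\phi_v\circ\partial_v)^{-1}=h_v^{1-2^{k_v}}$ is \emph{not} $2$-primary in general: since $1-2^{k_v}$ is odd, the odd part of $h_v$ typically survives in it. Your assertion that the defect lies in $\mu(F_v)[2^{\infty}]$ is therefore unjustified by the cited lemma. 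What rescues the argument — and is what the paper's Theorem \ref{thm_GlobalMain} implicitly asserts — is that the left square commutes only after tensoring with $\mathbb{Z}[\tfrac{1}{2}]$: there the $2^{k_v}$-th power is an automorphism of the odd part of $\mu(F_v)$, so $\phi_v$ is (and must be understood as) the Teichm\"uller lift composed with the inverse of that power. Once you phrase the comparison in $\mathbb{Z}[\tfrac{1}{2}]$-coefficients rather than as an integral $2$-primary congruence, the rest of your chase goes through and yields the corollary.
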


\begin{proof}
Theorem \ref{thm_GlobalMain} shows that the said product is zero in
$\mu(F)\left[  \frac{1}{2}\right]  $, but by our assumption the $2$-torsion
part of $\mu(F)$ is only $\{\pm1\}$.
\end{proof}

One might also ask about the kernel of the leftmost arrow in Moore's sequence.
It is known as the \emph{wild kernel }$WK_{2}$:%
\[
0\longrightarrow WK_{2}(F)\longrightarrow K_{2}(F)\longrightarrow
\bigoplus_{v\text{ }\operatorname*{noncomplex}}\mu(F_{v})\overset{\cdot
\frac{m_{v}}{m}}{\longrightarrow}\mu(F)\longrightarrow0\text{.}%
\]
Following earlier work of Tate, Hutchinson has proven that $WK_{2}%
(F)/K_{2}(F)_{div}\in\{0,\mathbb{Z}/2\}$ along with a precise criterion which
case occurs for what number fields $F$ (both cases occur),
\cite{MR1824144,MR2072396}. Note that our Theorem \ref{thm_GlobalMain} shows
that $K_{2}(\mathcal{R})$ surjects onto the wild kernel after inverting $2$.
This suggests that one can probably also define the global optimal order
analogously to Theorem \ref{thm_LocalMain} as the largest $\mathbb{Z}$-order
in $\mathcal{O}_{F}$ such that $K_{2}(\mathcal{R})\left[  \frac{1}{2}\right]
$ is divisible. This would mirror our characterization of local optimal orders
in a rather neat way.

We shall explain the relation to the picture of Kapranov--Smirnov \cite{ks} in
a future text.

\section{Is this a self-contained proof of Hilbert
reciprocity?\label{sect_SelfCont}}

Given that Gillet's proof of Hilbert reciprocity for function fields uses
nothing more than localization, the reader might (and should) ask to what
extent the method in this text gives a self-contained proof in the number
field case. First, we have freely used the definition of the Hilbert symbol,
which itself uses local class field theory. Using only this, we then could
establish the exactness of%
\[
K_{2}(F)\overset{\partial}{\longrightarrow}\bigoplus_{v\text{ }%
\operatorname*{noncomplex}}\mu(F_{v})\longrightarrow SK_{1}(\mathcal{R}%
)\longrightarrow0
\]
up to inverting $2$. The issues at $p=2$, a familiar nightmare of any number
theorist (and homotopy theorist...) do not appear easy to remove. At places
over $p=2$ Theorem \ref{thm_LocalMain} fails, and the real places do not even
possess a valuation ideal (how to pinch a singularity in the order when there
is no maximal order?). Let us accept these shortcomings. Then the more crucial
issue is that our Hilbert reciprocity law is about the vanishing of terms in
the group $SK_{1}(\mathcal{R})$, which remains elusive without further work
(see Remark \ref{rmk_IdeasRegardingSK1R}).

This is really similar to a completely different approach to Hilbert
reciprocity originating from the papers \cite{MR3954369, clausennc}. These
papers produce, if we only use local class field theory as input, an exact
sequence%
\begin{equation}
K_{2}(F)\overset{\partial}{\longrightarrow}\bigoplus_{v}\mu(F_{v}%
)\longrightarrow K_{2}(\mathsf{LCA}_{F})_{/div}\longrightarrow0\text{,}
\label{lcimde5a}%
\end{equation}
(literally, without having to invert $2$), where $v$ runs over all non-complex
places and $\mathsf{LCA}_{F}$ is the category of locally compact topological
$F$-vector spaces, $G_{/div}$ refers to $G/G_{div}$, the quotient by the
subgroup of divisible elements. The exactness of Equation \ref{lcimde5a} gets
proven loc. cit. in an \textit{entirely different fashion} than the methods of
the present text. There are no singular orders involved. Even though
localization sequences play a role in the proof, one never localizes with
respect to zero-dimensional subschemes. Loc. cit. does \emph{not} realize the
Hilbert symbol as a boundary map. Sequence \ref{lcimde5a} also turns out to be
equivalent to Moore's sequence \cite[Corollary 9.6]{clausennc}, but again with
the issue that the identification $K_{2}(\mathsf{LCA}_{F})_{/div}\cong\mu(F)$
remains elusive without using global class field theory.

\begin{remark}
\label{rmk_IdeasRegardingSK1R}I see several possible approaches to compute
$SK_{1}(\mathcal{R})$. In this paper, in Theorem \ref{thm_GlobalMain} we have
just used\ Moore's sequence. This was easy, but would be circular to be an
independent approach to Hilbert reciprocity. Alternatively, I would believe
one could use the work of Bass--Milnor--Serre on the congruence subgroup
problem \cite{MR244257}. All elements of $SK_{1}(\mathcal{R})$ can be
expressed through Mennicke symbols associated to invertible ideals. As
$\mathcal{R}$ is generally not Dedekind, finitely many ideals will fail to be
invertible, but the others suffice to generate the group. However, the next
step then would be to find relations between these Mennicke symbols, which
would probably require to connect these to power reciprocity symbols as in
Bass--Milnor--Serre and then again rely on properties also equivalent to
already having Hilbert reciprocity available. Another idea might be Iwasawa
theory, because inspecting the corresponding group in Gillet's proof in the
function field case is linked to the Jacobian of the curve after base
extension to the algebraic closure. Iwasawa has taught us that the mixed
characteristic counterpart of this should be $\mathbb{Z}_{p}$-extensions. But
again, relying on Iwasawa theory very quickly necessitates relying on tools
from global class field theory. This remains to be investigated.
\end{remark}

\section{Applications \& Complements}

\subsection{Metaplectic extensions}

Using the above results, one can provide a quick construction of the local and
global metaplectic coverings, including the proof of the reciprocity property
of the global covering vaguely resembling Gillet's proof of Weil reciprocity
(\S \ref{sect_GilletsProof}). We briefly recall the background.

The classical theta function is given by%
\[
\theta(z):=\sum_{n\in\mathbb{Z}}q^{n^{2}}\qquad\text{with}\qquad q:=e^{2\pi
iz}%
\]
for $z$ in the complex upper half-plane. Any even power $\theta^{2m}$ is a
modular form of weight $m$ for $\Gamma_{0}(4)$ and a certain nebentype. The
odd powers therefore were classically understood as something like a
half-integral weight modular form, having a more complicated transformation
behaviour involving the quadratic residue symbol. Weil \cite{MR165033} then
proposed to view these odd powers, and in particular $\theta$ itself, as
automorphic forms on a central extension%
\begin{equation}
1\longrightarrow\mu_{2}\longrightarrow\widehat{G}\longrightarrow
\operatorname*{GL}\nolimits_{2}\longrightarrow1 \label{lwaa0}%
\end{equation}
as opposed to living on the original group $\operatorname*{GL}\nolimits_{2}$.
The appearance of quadratic residue symbols in the transformation formulas
then translates to the defining cocycle $H_{\operatorname*{grp}}%
^{2}(\operatorname*{GL}\nolimits_{2},\mu_{2})$ be given in terms of these.
However, once thinking about such central extensions, one can construct more
complicated ones. Following Kubota \cite{MR0255490}, if $F$ is a number field
and $\mathbf{A}_{F}$ its ad\`{e}le ring, one can construct a central extension%
\begin{equation}
1\longrightarrow\mu(F)\longrightarrow\widehat{G}\longrightarrow
\operatorname*{GL}\nolimits_{2}(\mathbf{A}_{F})\longrightarrow1\text{,}
\label{lwaa1a}%
\end{equation}
which for $F=\mathbb{Q}$ specializes to Equation \ref{lwaa0}. In fact its
restriction to $\operatorname*{SL}\nolimits_{2}(\mathbf{A}_{F})\subset
\operatorname*{GL}\nolimits_{2}(\mathbf{A}_{F})$ is the universal topological
central extension of $\operatorname*{SL}\nolimits_{2}$ (we will not explain
this in detail, but using the topology of the ad\`{e}les, $\operatorname*{SL}%
\nolimits_{2}(\mathbf{A}_{F})$ is naturally a locally compact group and one
can consider the category of central extensions of locally compact topological
groups. Algebraically, the universal central extension is strictly bigger,
\cite{MR244258}). The extension in Equation \ref{lwaa1a} has the following
property: If $F\hookrightarrow\mathbf{A}_{F}$ is the diagonal embedding, the
pullback of the central extension trivializes (i.e. it splits). This property
turns out to be equivalent to Hilbert reciprocity. The pullback of the
extension along the inclusion of a local field $F_{v}\hookrightarrow
\mathbf{A}_{F}$ gives rise to local counterparts of the metaplectic extension.
The theory of metaplectic extensions has branched into several separate lines
of development \cite{MR743816,MR1896177}. We just follow one thread here.

Suppose $R$ is any ring. Then the group of elementary matrices
$\operatorname*{E}(R)\subseteq\operatorname*{SL}(R)$ has a universal central
extension, the \emph{Steinberg group} $\operatorname*{St}(R)$. Its center
agrees with the group $K_{2}(R)$.%
\begin{equation}
1\longrightarrow K_{2}(R)\longrightarrow\operatorname*{St}(R)\longrightarrow
\operatorname*{E}(R)\longrightarrow1\text{.} \label{lwaa1}%
\end{equation}
Thus, for any group homomorphism $\gamma:K_{2}(R)\rightarrow A$ to an abelian
group $A$, one can take the pushout of this central extension along $\gamma$
and get a new central extension of $\operatorname*{E}(R)$. If we describe the
isomorphism class of a central extension of $\operatorname*{E}(R)$ by
$K_{2}(R)$ through its group cocycle in $H_{\operatorname*{grp}}%
^{2}(\operatorname*{E}(R),K_{2}(R))$, this operation just amounts to the
functoriality in coefficients%
\[
\gamma_{\ast}\colon H_{\operatorname*{grp}}^{2}(\operatorname*{E}%
(R),K_{2}(R))\longrightarrow H_{\operatorname*{grp}}^{2}(\operatorname*{E}%
(R),A)\text{.}%
\]
For $F/\mathbb{Q}_{p}$ a finite extension with $p$ odd, take $R:=F$ and
$(\mathcal{R},\mathfrak{\tilde{m}})$ the local optimal order. We may use
Theorem \ref{thm_LocalMain} and the boundary map $\partial\colon
K_{2}(F)\rightarrow K_{\mathfrak{\tilde{m}},1}(\mathcal{R}_{m})$ to get a
central extension%
\begin{equation}
1\longrightarrow\mu(F)\longrightarrow\widehat{G}\longrightarrow
\operatorname*{SL}(F)\longrightarrow1 \label{laa4a}%
\end{equation}
because for fields the inclusion $\operatorname*{E}(F)\subseteq
\operatorname*{SL}(F)$ is the identity. One can now pull this back along
$\operatorname*{SL}\nolimits_{n}(F)\hookrightarrow\operatorname*{SL}(F)$ to
obtain the classical Kubota metaplectic extension for $n=2$. The story for
$\operatorname*{GL}\nolimits_{n}$ is more complicated in general, but one can
get Kubota's extension for $\operatorname*{GL}\nolimits_{2}$ by pulling back
along%
\[
\operatorname*{GL}\nolimits_{2}(F)\hookrightarrow\operatorname*{SL}%
\nolimits_{3}(F)\qquad M\mapsto%
\begin{pmatrix}
M & \\
& \det(M)^{-1}%
\end{pmatrix}
\text{.}%
\]
We can also do all this globally. Suppose $F$ is a number field and
$\mathbf{A}_{F}$ its ad\`{e}le ring. It is easy to set up a map%
\begin{equation}%
\xymatrix{
K_{2}(\mathbf{A}_{F}) \ar[r] & {\left\{  (\alpha_{v})_{v}\in\prod_{v}%
K_{2}(F_{v})\text{ }\left\vert\begin{array}
[c]{l}
\text{with }\alpha_{v}\in\operatorname*{im}K_{2}(\mathcal{O}_{v})\text{ for
all}\\
\text{but finitely many }v
\end{array}
\right.  \right\} } \ar[d]^{\partial} \\
& \bigoplus_{v}K_{\mathfrak{\tilde{m}},1}(\mathcal{R}_{v}) \ar[d] \\
&  SK_{1}(\mathcal{R}),
}
\label{lwaa2}%
\end{equation}
where the first downward arrow uses the boundary map with respect to the local
optimal order and the adelic finiteness condition ensures that the image lands
in a direct sum. The map to $SK_{1}$ then comes from our global formalism.

Now push out Equation \ref{lwaa1} for $R:=\mathbf{A}_{F}$ by the map in
Equation \ref{lwaa2} to obtain a central extension%
\begin{equation}
1\longrightarrow SK_{1}(\mathcal{R})\longrightarrow\widehat{G}\longrightarrow
\operatorname*{SL}(\mathbf{A}_{F})\longrightarrow1\text{.} \label{laa4}%
\end{equation}
By Theorem \ref{thm_GlobalMain} up to a zig-zag of isogenies of $2$-power
orders, this extension agrees with the global metaplectic extension (as an
elaboration: The isomorphism $SK_{1}(\mathcal{R})\left[  \frac{1}{2}\right]
\cong\mu(F)\left[  \frac{1}{2}\right]  $ implies the existence of isogenies of
abelian groups with $2$-power order. Pushouts along these maps on the centers
of the extension provide isogenies between the middle terms $\widehat{G}$ of
the same order). Moreover, this construction trivially has all the properties
as in Kubota's construction.\ Pulling back the extension in Equation
\ref{laa4} along $F_{v}\hookrightarrow\mathbf{A}_{F}$ (for $v$ any finite
place over an odd prime) retrieves the local metaplectic extension of Equation
\ref{laa4a}. The pullback along the diagonal embedding $\iota\colon
F\hookrightarrow\mathbf{A}_{F}$ trivializes.

To see the latter, we only need to understand the isomorphism class of the
extension, i.e. we can work with the group $2$-cocycles. The pullback amounts
to pulling back the cocycle, i.e if $\gamma$ denotes the extension in Equation
\ref{laa4}, we consider%
\[
\iota^{\ast}:H_{\operatorname*{grp}}^{2}(\operatorname*{SL}(\mathbf{A}%
_{F}),SK_{1}(\mathcal{R}))\longrightarrow H_{\operatorname*{grp}}%
^{2}(\operatorname*{SL}(F),SK_{1}(\mathcal{R}))\text{,}%
\]
but the cocycle came from pushing out along a map on the level of $K_{2}$,
which for the pullback $\iota^{\ast}\gamma$ can be factored as%
\[
K_{2}(F)\longrightarrow K_{2}(\mathbf{A}_{F})\overset{\text{Eq. \ref{lwaa2}}%
}{\longrightarrow}SK_{1}(\mathcal{R})\text{,}%
\]
but this is the composition of two successive arrows in the localization
sequence (Equation \ref{lzw3}). Thus, $\iota^{\ast}\gamma$ is the zero
cohomology class. This class corresponds to a trivial central extension. And,
in view of Theorem \ref{thm_GlobalMain} this property is again (up to the
$2$-power isogeny) equivalent to Hilbert reciprocity. Note that, without
worrying about $2$, the central extension by $SK_{1}(\mathcal{R})$ exists
unconditionally and trivializes after pulling back to $\operatorname*{SL}(F)$.
This is some variant of Hilbert reciprocity and of the usual metaplectic covering.

\begin{remark}
[\cite{clausennc}]The group on the upper right in Diagram \ref{lwaa2} can be
shown to be isomorphic to $K_{2}(\mathsf{LCA}_{F})$. The horizontal arrow then
merely amounts to be induced by the functor sending the projective generator
$\mathbf{A}_{F}$ to itself, equipped with its standard locally compact
topology. Moreover, the downward arrow, here denoted by $\partial$, can be
characterized as the cokernel under quotienting out the subgroup of divisible
elements in the source group.
\end{remark}

%

\appendix

\section{$K$-theory in the singular situation\label{sect_appendix_Kthy}}

In this section we summarize some background on Algebraic $K$-theory,
focussing on what we need in this paper. As we will specifically deal with
non-regular rings, we need to use a sufficiently broad framework to handle
such singular situations. We will use the framework of Thomason--Trobaugh
\cite{MR1106918}.

For a scheme\footnote{We tacitly assume that all our schemes are Noetherian
and separated over $\mathbb{Z}$.} we write $K(X)$ for its non-connective
$K$-theory spectrum. This can be defined as the non-connective $K$-theory of
the category of perfect complexes on $X$, or equivalently as the one of vector
bundles on $X$. We mostly deal with rings, so following common practice we
write $K(R):=K(\operatorname*{Spec}R)$ for the $K$-theory of a ring.
Equivalently, this could be defined as the $K$-theory of the category of
finitely generated projective $R$-modules.

If $I$ is an ideal in $R$, we write $K_{I}(R)$ for what would otherwise be
denoted by%
\[
K_{\operatorname*{Spec}R/I}(R)\qquad\text{or}\qquad K(\operatorname*{Spec}%
R\left.  \ \operatorname{on}\ \right.  \operatorname*{Spec}R/I)\text{.}%
\]
This can be defined as the $K$-theory of perfect complexes of $R$-modules
which become acyclic after pulling them back to the open $\operatorname*{Spec}%
R-\operatorname*{Spec}R/I$.

We shall only need a few general facts: First, there is the general
localization theorem, a fiber sequence of spectra%
\begin{equation}
K_{I}(R)\longrightarrow K(R)\longrightarrow K(\operatorname*{Spec}%
R-\operatorname*{Spec}R/I)\text{.} \label{lvtx0}%
\end{equation}
This holds for any commutative unital ring $R$ and any ideal $I$. This is
\cite[Theorem 7.4]{MR1106918}. If $I=(f)$ is a principal ideal, the last term
agrees with $K(\operatorname*{Spec}R[f^{-1}])$. In this case $K_{I}(R)$ can
also be defined as the $K$-theory of perfect complexes $P_{\bullet}$ whose
(exact) basechange $P_{\bullet}\otimes_{R}R[f^{-1}]$ is acyclic:%
\begin{equation}
K_{I}(R)\longrightarrow K(R)\longrightarrow K(R\left[  \frac{1}{f}\right]
)\text{.} \label{lvtx1}%
\end{equation}

We return to the general case. If $I,J$ are coprime ideals, i.e. $I+J=R$, then%
\begin{equation}
K_{IJ}(R)\cong K_{I}(R)\oplus K_{J}(R)\text{.} \label{lvtx2}%
\end{equation}
This is \cite[Corollary 8.1.4]{MR1106918}: As the ideals are coprime, we have
$\operatorname*{Spec}R/I\cap\operatorname*{Spec}R/J=\varnothing$, when
regarded as closed subschemes in $\operatorname*{Spec}R$, so the upper left
corner in the cartesian square loc. cit. is the zero spectrum.

We also need a certain compatibility with completions. Suppose $R$ is a
Noetherian ring and $I$ any ideal. Then there is an equivalence%
\begin{equation}
K_{I}(R)\overset{\sim}{\longrightarrow}K_{I\widehat{R}_{I}}(\widehat{R}%
_{I})\text{,} \label{lvtx3}%
\end{equation}
where $\widehat{R}_{I}$ denotes the $I$-adic completion of $R$, and
$I\widehat{R}_{I}$ is the extended ideal. This is \cite[Proposition 3.19, in
the format of Exercise 3.19.2]{MR1106918}.

Finally, if $R$ is a Noetherian regular ring, then the inclusion of the
category of finitely generated projective $R$-modules into the category of all
finitely generated $R$-modules induces an equivalence%
\[
K(R)\overset{\sim}{\longrightarrow}K(\mathsf{Mod}_{fg}(R))\text{.}%
\]
In this situation we have devissage: Suppose $R$ is a regular Noetherian ring
such that $R/I$ is also regular. Then%
\[
K_{I}(R)\overset{\sim}{\longrightarrow}K(R/I)\text{.}%
\]
Finally, let us note that $K$-theory commutes with finite products of rings
and with filtering colimits.

\section{The Residue Theorem using\ Gillet's method}

Besides wanting to prove Hilbert reciprocity using Gillet's method from
\S \ref{sect_GilletsProof}, one can also prove the residue theorem on curves
using his technique. This is well-known among specialists, but perhaps not too
well recorded in the literature.

\begin{theorem}
[Residue theorem]Let $k$ be a field (of any characteristic). Let $X/k$ be a
geometrically integral smooth proper curve with function field $F:=k(X)$. Then
the composition
\[
\Omega_{F/k}^{1}\overset{\partial}{\longrightarrow}\bigoplus_{v}H_{v}%
^{1}(X,\Omega_{X/k}^{1})\overset{\operatorname*{Tr}_{\{v\}/k}}{\longrightarrow
}k
\]
is zero, where $\partial$ denotes the residue of a rational $1$-form. Here
$H_{v}^{1}$ denotes coherent cohomology with support\footnote{The right
derived functors of the functor of global sections with support in the given
closed point, $R\Gamma_{v}$.} in the closed point belonging to the place $v$.
\end{theorem}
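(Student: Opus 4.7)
The plan is to run Gillet's argument from \S\ref{sect_GilletsProof} verbatim, but with algebraic $K$-theory replaced by coherent cohomology with support of the sheaf $\Omega_{X/k}^{1}$. For any zero-dimensional closed $Z\subseteq X$, the local cohomology triangle
\[
R\Gamma_{Z}(X,\Omega_{X/k}^{1})\longrightarrow R\Gamma(X,\Omega_{X/k}^{1})\longrightarrow R\Gamma(X-Z,\Omega_{X/k}^{1})
\]
is the analogue of the localization sequence in Equation \ref{lcff1}. Since $\Omega_{X/k}^{1}$ is a line bundle on a smooth curve, local cohomology with support in a closed point is concentrated in degree $1$, so $H^{0}_{Z}=0$ and $H^{i}_{Z}=0$ for $i\geq 2$; moreover, once $Z\neq\varnothing$ the complement $X-Z$ is affine, so $H^{1}(X-Z,\Omega_{X/k}^{1})=0$. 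Passing to the filtered colimit over $Z$ (ordered by inclusion) and using excision $H^{1}_{Z}=\bigoplus_{P\in Z}H^{1}_{P}$, one identifies $\operatorname*{colim}_{Z}H^{0}(X-Z,\Omega_{X/k}^{1})=\Omega_{F/k}^{1}$ via the generic stalk, and obtains the four-term exact sequence
\[
0\to H^{0}(X,\Omega_{X/k}^{1})\to \Omega_{F/k}^{1}\overset{\partial}{\longrightarrow}\bigoplus_{v}H^{1}_{v}(X,\Omega_{X/k}^{1})\overset{j}{\longrightarrow}H^{1}(X,\Omega_{X/k}^{1})\to 0.
\]
By the standard local description on a DVR, the boundary $\partial$ is the sum of the classical residue maps at each place.

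The next step is to supply the analogue of Gillet's diagonal arrow $R\pi_{\ast}$. Because $\pi\colon X\to\operatorname*{Spec} k$ is proper and $X$ is a smooth proper curve, coherent duality (or simply the computation of $R^{1}\pi_{\ast}$ of the dualizing sheaf) yields the canonical trace $\operatorname*{Tr}\colon H^{1}(X,\Omega_{X/k}^{1})\to k$. Each closed immersion $i_{v}\colon\operatorname*{Spec}\kappa(v)\hookrightarrow X$ is proper, so $\pi\circ i_{v}$ is a finite morphism; taking $\operatorname*{Tr}_{\{v\}/k}$ to be its pushforward (the natural choice once we adopt the $R\pi_{\ast}$ framework), functoriality of proper pushforward then delivers the compatibility $\operatorname*{Tr}_{\{v\}/k}=\operatorname*{Tr}\circ j_{v}$, where $j_{v}$ is the $v$-component of $j$. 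We then assemble the commutative diagram
\[
\xymatrix{
\Omega_{F/k}^{1} \ar[r]^-{\partial} \ar[dr]_{0} & \bigoplus_{v} H^{1}_{v}(X,\Omega_{X/k}^{1}) \ar[d]^{\operatorname*{Tr}_{\{v\}/k}} \ar[r]^-{j} & H^{1}(X,\Omega_{X/k}^{1}) \ar[dl]^{\operatorname*{Tr}} \ar[r] & 0 \\
 & k & &
}
\]
and conclude as in Gillet: the right-hand triangle commutes by the trace compatibility, while $j\circ\partial=0$ because these are two consecutive arrows of an exact sequence ($d^{2}=0$). Hence the outer composition through $k$ vanishes, which is the residue theorem.

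The main obstacle I anticipate is not the formal $K$-theoretic skeleton, which is entirely parallel to Gillet's proof and in particular to his Diagram \ref{lcffA1}, but the two input identifications: first, that the coboundary $\partial$ of the local cohomology sequence really coincides with the classical residue of a rational differential at each $v$, which reduces to a sign-sensitive computation in the DVR $\mathcal{O}_{X,v}$; and second, the trace compatibility $\operatorname*{Tr}_{\{v\}/k}=\operatorname*{Tr}\circ j_{v}$, which is essentially the content of the trace in Grothendieck--Serre duality for curves. To keep the argument short and in Gillet's spirit, one should define $\operatorname*{Tr}$ from the outset as $R^{1}\pi_{\ast}$ applied to $\Omega_{X/k}^{1}$; then compatibility becomes functoriality of pushforward along $X\to\operatorname*{Spec} k$ precomposed with each $i_{v}$, just as $\pi\circ i_{P}$ furnished the norm $N_{\kappa(v)/k}$ in Gillet's original proof.
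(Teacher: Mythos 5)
Your proposal is correct, but it runs on different machinery than the paper's proof, so let me compare the two routes. The paper replaces $K$-theory in Gillet's argument by Hochschild homology $HH(-)$ over $k$, invoking that $HH$ is a localizing invariant in the sense of Blumberg--Gepner--Tabuada, so one gets the analogous fiber sequence $HH_{Z}(X)\to HH(X)\to HH(X-Z)$; it then passes to differential forms via the Hochschild--Kostant--Rosenberg isomorphism and, for the local terms, via an HKR-with-support theorem (the cited \cite[Prop.\ 2.0.1]{MR3990809}). Your argument skips the Hochschild detour entirely and works directly with the local cohomology of the sheaf $\Omega^{1}_{X/k}$: the distinguished triangle $R\Gamma_{Z}\to R\Gamma\to R\Gamma(X-Z)$, vanishing of $H^{0}_{Z}$ and $H^{\geq 2}_{Z}$ for a line bundle on a smooth curve, affineness of $X-Z$ for $Z\neq\varnothing$, and the colimit identification with $\Omega^{1}_{F/k}$. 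Both proofs ultimately lean on the same two nontrivial local-duality inputs, which you correctly single out: the coincidence of the local-cohomology coboundary with the classical residue, and the local--global compatibility $\operatorname{Tr}_{\{v\}/k}=\operatorname{Tr}\circ j_{v}$ of the Grothendieck--Serre trace. What the paper's Hochschild route buys is a sequence that is formally identical in shape to Gillet's $K$-theory argument (a localizing invariant plus $R\pi_{\ast}$), and it extends without change to settings where one wants additive invariants other than $K$-theory; what your route buys is concreteness and elementariness --- it is essentially the classical local-cohomology proof of the residue theorem and avoids any need for the HKR-with-support technology. One small imprecision in your wording: $R^{1}\pi_{\ast}\Omega^{1}_{X/k}$ computes the group $H^{1}(X,\Omega^{1}_{X/k})$, not the map $\operatorname{Tr}$ to $k$; the trace to $k$ is the additional datum supplied by Grothendieck--Serre duality for the proper morphism $\pi$, and it is this map whose compatibility with pushforward along $\pi\circ i_{v}$ drives the argument, exactly as $R\pi_{\ast}$ and the finite pushforwards furnish the norm maps in Gillet's original proof.
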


\begin{proof}
As in Gillet's proof, we use the localization sequence, but this time for
Hochschild homology over $k$. To this end note that Hochschild homology is
also a localizing invariant in the sense of Blumberg--Gepner--Tabuada
\cite{MR3070515}. We get the fiber sequence%
\begin{equation}
HH_{Z}(X)\longrightarrow HH(X)\longrightarrow HH(X-Z)\text{.}%
\end{equation}
As before, we may split $Z$ into its disjoint connected components and get
$HH_{Z}(X)=\bigoplus_{P}HH_{P}(X)$, where $P$ runs through the finitely many
points of $Z$. Finally, by the Hochschild--Kostant--Rosenberg (HKR)
isomorphism we have $HH_{n}(X)\cong\Omega_{X/k}^{n}$. As $\dim X=1$, we only
get a non-trivial statement for $n=1$. For unravelling the Hochschild homology
with support in a closed point and showing that the boundary map agrees with
the residue, see the HKR\ theorem with support from \cite[Prop. 2.0.1]%
{MR3990809}, noting that Hochschild homology with support is denoted by
$H^{Z}$ loc. cit.
\end{proof}

One can also use cyclic homology over $k$. This also proves the residue theorem.

\bibliographystyle{amsalpha}
\bibliography{ollinewbib}

\end{document}